\numberwithin{equation}{section}
\newtheorem{thm}{Theorem}[section]
\newtheorem{lemma}[thm]{Lemma}
\newtheorem{prop}[thm]{Proposition}
\newtheorem{cor}[thm]{Corollary}
\newtheorem{conj}[thm]{Conjecture}
\newtheorem{claim}[thm]{Claim}
\newtheorem{ex}[thm]{Example}
\newtheorem{rmk}[thm]{Remark}
\newtheorem{THM}{Theorem}
\newtheorem{COR}[THM]{Corollary}
\newtheorem*{CONJ}{Conjecture}
\newcommand{\mb}{\mathbb}
\newcommand{\C}{\mb C}
\newcommand{\Pj}{\mb P}
\newcommand{\mf}{\mathfrak}
\newcommand{\m}{\mf m}
\newcommand{\mc}{\mathcal}
\newcommand{\F}{\mc F}
\newcommand{\mcE}{\mc E}
\newcommand{\mcL}{\mc L}
\newcommand{\Lie}{\mc{L}}
\newcommand{\mr}{\mathrm}
\newcommand{\hatnabla}{\widehat{\nabla}}
\newcommand{\de}{\partial}
\newcommand\restr[2]{{
  \left.\kern-\nulldelimiterspace % automatically resize the bar with \right
  #1 % the function
  \vphantom{\big|} % pretend it's a little taller at normal size
  \right|_{#2} % this is the delimiter
  }}
\DeclareMathOperator{\mrrestr}{restr}
\newcommand{\OmegaDE}{\Omega_X^1(D)(\mcE)}
\newcommand{\DX}[1]{\mc{D}\mathit{iff}_X^{\le #1}}
\newcommand{\DXF}[1]{\mc{D}\mathit{iff}_{X/\F}^{\le #1}}
\newcommand{\PX}{\mc{P}_X}
\newcommand{\PXF}{\mc{P}_{X/\F}}
\newcommand{\PPjnF}{\mc{P}_{\Pj^n/\F}}
\newcommand{\PPjtwoF}{\mc{P}_{\Pj^2/\F}}
\newcommand{\jetX}{\mc{J}_X}
\newcommand{\jetXF}{\mc{J}_{X/\F}}
\newcommand{\germe}{\mathcal{O}}
\newcommand{\mcHom}{\mathcal{H}om}
\DeclareMathOperator{\codim}{codim}
\DeclareMathOperator{\Sym}{Sym}
\DeclareMathOperator{\Hom}{Hom}
\DeclareMathOperator{\Aff}{Aff}
\newcommand{\aff}{\mf{aff}}
\DeclareMathOperator{\Aut}{Aut}
\newcommand{\ideal}[1]{\left< #1\right>}
\newcommand{\CNF}{{N^*_\F}}
\newcommand{\CTF}{\Omega_\F}
\DeclareMathOperator{\sing}{sing}
\newcommand{\idealsing}{I_{\mathrm{sing}}}
\newcommand{\idealpers}{I_{\mathrm{pers}}}
\newcommand{\df}{d_{\mc F}}
\title{Jets of flat partial connections II}
\author[G. Fazoli]{Gabriel Fazoli}
\address{Univ Rennes, CNRS, IRMAR-UMR 6625, F-35000 Rennes, France}
\email{gabrielfazoli@gmail.com}
\date{}
\begin{document}

\begin{abstract}
    We define and study jets of flat partial connections with respect to singular foliations. In particular, we use the first sheaf of transverse jets to address the problem of extending a flat partial connection to a (flat) meromorphic connection. We then apply our results on the extension problem to classical problems in foliation theory, such as the existence of transversely affine structures and invariant hypersurfaces.
\end{abstract}

\maketitle

\section{Introduction}

This work is a sequel to \cite{fazoli25-arXiv:2505.11662}, but it has been written in a manner that it can be read independently.

\subsection{Foliations, partial connections and jets of flat partial connections}\label{Subsection: introduction, partial connections, jets}

Let $X$ be a smooth complex variety, either in the analytic or in the algebraic setting. Recall that a \emph{foliation} $\F$ on $X$ is a coherent involutive subsheaf $T_{\F} \subset T_X$ such that the quotient $T_X/T_{\F}$ is torsion free. We refer to $T_{\F}$ as the \emph{tangent sheaf} of the foliation, and we define $\rm{rk} (T_{\F})$ to be the \emph{dimension} of $\F$.

Let $\mcE$ be a coherent sheaf over $X$. We define
\[
    \CTF^1(\mcE):= \mcHom_{\germe_X}(T_{\F},\mcE)
\]
to be the sheaf of \emph{foliated 1-forms with coefficients in $\mcE$}. When $\mcE = \germe_X$, we denote $\CTF^1(\germe_X)$ simply by $\CTF^1$, and we refer to it as the \emph{cotangent sheaf} of $\F$. A $\F$-partial connection on $\mcE$ is a $\C$-linear map
\begin{equation*}
    \begin{split}
        \nabla: \mcE & \rightarrow \CTF^1(\mcE) \\
        s & \mapsto \big(v\in T_{\F} \mapsto \nabla_v(s) \big)
    \end{split}
\end{equation*}
such that, for all $f\in \germe_X, s\in \mcE, v\in T_{\F}$, the following Leibniz rule holds:
\begin{equation}\label{Eq: Leibniz rule}
    \nabla_v(f\cdot s) = v(f) \cdot s + f \cdot \nabla_v(s).
\end{equation}
When $\F$ is clear from the context, we will simply refer to $\nabla$ as a partial connection. Additionally, we say that $\nabla$ is \emph{flat} if
\begin{equation}\label{Eq: flatness}
    \nabla_{[v,w]} = \nabla_v \circ \nabla_w - \nabla_w\circ \nabla_v, \forall v,w\in T_{\F}.
\end{equation}

We refer to \cite[Section 3]{fazoli25-arXiv:2505.11662} for a more detailed discussion on partial connection, including examples and key properties.

Both in \cite[Definition 3.1]{biswas-02-zbMATH01786109} and \cite[Section 4.2]{fazoli25-arXiv:2505.11662}, the notion of \emph{jets of flat partial connections} is defined in the context of analytic varieties, smooth foliations, and flat partial connections on locally free sheaves. Following \cite{fazoli25-arXiv:2505.11662}, the construction proceeds as follows. 

Given a flat partial connection $\nabla$ on a locally free sheaf $\mcE$, we define the \emph{$k$-th sheaf of transverse jets of $(\mcE,\nabla)$} as the locally free $\germe_X$-module generated by the $k$-jets of flat sections of $\nabla$, namely: 
\[
\jetXF^k(\nabla):= \left\{ \sum f_i \cdot d^k(s_i) \mid f_i \in \germe_X, s_i \in \ker \nabla \right\} \subset \jetX^k(\mcE).
\]
Next, denoting by $\germe_{X/\F}$ the ring of first integrals of $\F$, we consider the locally free $\germe_{X/\F}$-module generated by the $k$-jets of flat sections of $\nabla$, that is, the sheaf
\[
\jetXF^k(\ker \nabla) := \left\{ \sum f_i \cdot d^k(s_i) \mid f_i \in \germe_{X/\F}, s_i \in \ker \nabla \right\} \subset \jetX^k(\mcE).
\]
Since
\begin{enumerate}[label = -]
    \item the rank of $\jetXF^k(\ker \nabla)$ as an $\germe_{X/\F}$-module is equal the rank of $\jetXF^k(\nabla)$ (see \cite[Proposition 4.4]{fazoli25-arXiv:2505.11662}), and
    \item $\jetXF^k(\ker \nabla)$ generates $\jetXF^k(\nabla)$ as a $\germe_X$-module,
\end{enumerate}
there exists a unique flat partial connection $\nabla^k$ on $\jetXF^k(\nabla)$ such that $\ker \nabla^k = \jetXF^k(\ker \nabla)$ (see \cite[Corollary 3.8 and Corollary 4.5]{fazoli25-arXiv:2505.11662}). The pair $(\jetXF^k(\nabla),\nabla^k)$ is called the \emph{$k$-th jet of the flat partial connection $(\mcE,\nabla)$}.

These objects have been employed to study transverse structures to foliations. See, for instance, \cite[Theorem 5.3]{biswas-02-zbMATH01786109} and \cite[Corollary 5.5 and Theorem 5.9]{fazoli25-arXiv:2505.11662}. This is a very natural perspective, as the classical concept of the jet of sheaf is also used in the study of special structures of varieties (see, e.g., \cite{Deligne-70-zbMATH03385791, gunning-67-zbMATH03233043, kobayashi-nagano-64-zbMATH03191725}).

From the preceding construction, two natural problems arise:
\begin{enumerate}[label = (\roman*)]
    \item\label{I: problem, algebraic} to provide an \emph{algebraic} construction of the jets of flat partial connections, and
    \item\label{I: problem, singular} to extend the definition for the context of \emph{singular} foliations and \emph{not necessarily locally free} sheaves.
\end{enumerate}
Indeed, in the construction of the sheaf of transverse jets we described relies deeply on both the ring of first integrals and the sheaf of flat sections of a flat connection, and these objects are not well-behaved in the settings presented above for several reasons. For instance, in general, a flat partial connection does not admit flat algebraic sections, and a foliation may not admit any algebraic first integral. Even in the analytic setting, one could not, in general, expect to find first integrals and flat sections on neighborhoods of singularities.

In this work, we provide a construction of jets of flat partial connections that solves Problem \ref{I: problem, algebraic} and, in the spirit of Problem \ref{I: problem, singular}, extend the construction for the setting of singular foliations and flat partial connections on \emph{reflexive} sheaves. This construction is presented in Section \ref{Subsection: transverse jets} where, in particular, the discussion above appears as Corollary \ref{Cor: transverse jets}.

\subsection{Restriction and extensions of partial connections} For every effective divisor $D$ on a smooth variety $X$, and every coherent sheaf $\mcE$, we denote by 
\[
    \Omega_X^1(D)(\mcE) := \Omega_X^1(D) \otimes \mcE
\]
the sheaf of meromorphic 1-forms with poles $D$ and with coefficients in $\mcE$. We say that a meromorphic connection 
\[
    \nabla: \mcE \rightarrow \OmegaDE
\]
\emph{restricts to a partial connection} if, for every $v\in T_{\F}$ and $s\in \mcE$, the element $\nabla_v(s) \in \mcE(D)$ is in fact a holomorphic section of the sheaf $\mcE$. That is, $\nabla$ induces a partial connection $\nabla_0$ on $\mcE$ such that the diagram
\begin{equation}\label{D: restriction of connections}
    \begin{tikzpicture}[baseline=(current bounding box.center)]
        \matrix(m)[matrix of math nodes, column sep = 2em, row sep = 2em]
        {
        \mcE & \OmegaDE \\
        \CTF^1(\mcE) & \CTF^1(\mcE(D)) \\
        };
        \path[->]
        (m-1-1) edge node[above]{$\nabla$} (m-1-2) edge node[left]{$\nabla_0$} (m-2-1)
        (m-1-2) edge node[left]{$\mrrestr$}(m-2-2)
        (m-2-1) edge node[above]{$\iota$} (m-2-2)
        ;
    \end{tikzpicture}
\end{equation}
commutes, where $\iota:\CTF^1(\mcE) \rightarrow \CTF^1(\mcE(D))$ is the natural inclusion induced by $D$, and $\mrrestr:\OmegaDE \rightarrow \CTF^1(\mcE(D))$ is the restriction of 1-forms induced by the inclusion $T_{\F} \rightarrow T_X$. Moreover, observe that if $\nabla$ restricts to a partial connection, then $\nabla_0$ in Diagram (\ref{D: restriction of connections}) is unique, and we say that $\nabla_0$ is the \emph{restriction} of $\nabla$, while $\nabla$ is an \emph{extension} of $\nabla_0$. 

Finding (flat) meromorphic extensions of partial connections is deeply connected to several aspects of the theory of holomorphic foliations, such as the description of transversely affine and transversely projective structures. This phenomenon appears in works such as \cite{cerveau-lins-neto-loray-pereira-touzet-07-zbMATH05202834, cousin-pereira-14-zbMATH06399599,fazoli25-arXiv:2505.11662,loray-pereira-touzet-16-zbMATH06670708}. 

In this work, we use the construction of jets of flat partial connections %(Theorem \ref{THM: jets})
to characterize meromorphic extensions of a given flat partial connection $(\mcE,\nabla)$ in terms of splittings of a natural short exact sequence associated with $\PXF^1(\nabla)$ (see Equation \ref{Eq: short exact sequence of the first sheaf of transverse jets}). Precisely, we prove the following, which is the main result of this work (see Theorems \ref{T: meromorphic extensions} and \ref{T: flat meromorphic extensions}).

\begin{THM}\label{THM: meromorphic extension}
    Let $\F$ be a foliation on a smooth variety $X$, and let $(\mcE,\nabla)$ be a flat partial connection on a reflexive sheaf. Let $D\ge 0$ be a divisor on $X$. Then there is a natural correspondence (described in Proposition \ref{P: extensions and splittings}) between:
\begin{enumerate}[label = (\alph*)]
    \item meromorphic connections with poles on $D$ extending $\nabla$; and
    \item meromorphic splittings of the short exact sequence associated with the first transverse jet of $\nabla$ (see Equation (\ref{Eq: short exact sequence of the first sheaf of transverse jets})).
\end{enumerate}
If, in addition, $\F$ is of codimension one and $D$ is $\F$-invariant, then the above correspondence restricts to a correspondence between:
\begin{enumerate}[label = (\alph*')]
    \item flat meromorphic connections extending $\nabla$ with poles on $D$; and
    \item horizontal meromorphic splittings with poles on $D$ of the short exact sequence associated to the first transverse jet of $\nabla$.
\end{enumerate}
\end{THM}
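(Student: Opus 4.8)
The plan is to build the correspondence on the level of the first sheaf of transverse jets $\PXF^1(\nabla)$, whose defining short exact sequence (\ref{Eq: short exact sequence of the first sheaf of transverse jets}) should read, after the construction of Section~\ref{Subsection: transverse jets}, as an extension of $\mcE$ by the foliated-conormal twist $\CTF^1(\mcE)$ (or $\germe_X$-valued analogue), equipped with a canonical partial connection. The first claim, (a)$\leftrightarrow$(b), is essentially the classical jet-theoretic reformulation of the notion of a connection: a genuine connection $\mcE\to\Omega_X^1(\mcE)$ is the same datum as a splitting of the first jet sequence $0\to\Omega_X^1(\mcE)\to\PX^1(\mcE)\to\mcE\to 0$, and allowing a pole $D$ corresponds to allowing the splitting to be meromorphic with poles on $D$. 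So first I would recall the universal property of $\PXF^1(\nabla)$ from Section~\ref{Subsection: transverse jets}: a partial connection extending $\nabla$ to all of $T_X$ with values in $\mcE(D)$ should be shown to factor through, and be equivalent to, an $\germe_X$-linear meromorphic retraction $\PXF^1(\nabla)\to$ (the sub-jet sheaf), compatible with the restriction map to foliated jets. The key diagram to chase is (\ref{D: restriction of connections}): a meromorphic connection $\nabla$ restricts to $\nabla_0$ precisely when its associated global splitting of the full first-jet sequence is compatible, under $\mrrestr$, with the tautological splitting that $\nabla_0$ induces on the foliated first-jet sequence — and the latter is encoded by $\PXF^1(\nabla)$ by construction. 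This gives Proposition~\ref{P: extensions and splittings}, hence Theorem~\ref{T: meromorphic extensions}.

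For the refined statement (a$'$)$\leftrightarrow$(b$'$), I would add the flatness/horizontality bookkeeping. Flatness of a meromorphic connection $\nabla$ with poles on an $\F$-invariant divisor $D$, when $\F$ has codimension one, should be detectable after restriction: its curvature is a meromorphic $2$-form with values in $\mcEnd(\mcE)$, and the codimension-one hypothesis means the kernel of $\Lambda^2 T_X\to\Lambda^2 T_\F$ is controlled by $T_\F\wedge T_X$; invariance of $D$ guarantees the relevant pole orders do not interact badly. On the jet side, "horizontal" should mean the splitting intertwines the canonical partial connection $\nabla^1$ on $\PXF^1(\nabla)$ with the ambient structure — i.e., the splitting is flat along $T_\F$. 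I would show that a flat meromorphic extension produces a horizontal splitting by differentiating the splitting relation along leaves and using that $\nabla_0=\nabla$ restricts, and conversely that a horizontal splitting, together with the codimension-one Frobenius-type argument, forces the curvature of the reconstructed connection to vanish. The reflexivity of $\mcE$ is used here and throughout to extend identities proven on the smooth locus $X\setminus(\sing\F\cup\mathrm{Sing}\,\mcE)$, which has codimension $\ge 2$ complement, across the singular set — this is where many "local data glue to a global section" steps will be justified.

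The main obstacle I expect is verifying exactness and identifying the terms of (\ref{Eq: short exact sequence of the first sheaf of transverse jets}) in the singular/reflexive setting, since the old construction via $k$-jets of flat sections breaks down precisely where $\F$ or $\mcE$ is singular; one must instead use the algebraic construction of Section~\ref{Subsection: transverse jets} and check that the resulting $\PXF^1(\nabla)$ is still reflexive (or at least torsion-free with the right double dual) so that meromorphic splittings are controlled by their restriction to the smooth locus. A secondary delicate point is the precise matching of pole divisors: one needs that the restriction map $\mrrestr$ and the inclusion $\iota$ in (\ref{D: restriction of connections}) interact with the twist by $D$ so that "splitting with poles on $D$" on the jet side corresponds exactly to "connection with poles on $D$," with no spurious extra poles introduced along $\sing\F$; the $\F$-invariance of $D$ in the second part is exactly what is needed to keep the curvature computation within poles of order one along $D$. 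Once the sequence and its functoriality in $D$ are pinned down, the two correspondences are a diagram chase plus a curvature computation, both routine.
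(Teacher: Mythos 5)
Your proposal follows essentially the same route as the paper: meromorphic connections are identified with meromorphic splittings of the first jet sequence, extensions of $\nabla$ are exactly those splittings that factor through $\PXF^1(\nabla)\subset\PX^1(\mcE)$ (this is the paper's Proposition \ref{P: extensions and splittings}, proved by computing $\phi_\nabla\circ\sigma=\nabla-\mathrm{restr}\circ\hatnabla$), and the flat case is handled by the observation that in codimension one the curvature is determined by its contraction with $T_{\F}\otimes T_X$ together with a direct computation showing that this contraction vanishes iff the splitting is horizontal. Two of your supporting justifications are off, though neither derails the strategy. First, the kernel term of the sequence (\ref{Eq: short exact sequence of the first sheaf of transverse jets}) is $\CNF\otimes\mcE$ (forms annihilating $T_{\F}$), not $\CTF^1(\mcE)$, and the projection to $\mcE$ need not be surjective in the singular setting. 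Second, the $\F$-invariance of $D$ is not about controlling pole orders in the curvature computation: it is needed so that $\mcE(-D)$ inherits a partial connection making the inclusion $\mcE(-D)\subset\mcE$ horizontal (the paper's Lemma \ref{L: flat partial connection factors through F-invariant divisors}), without which the phrase ``horizontal meromorphic splitting'' in (b$'$) is not even defined; you would hit this as soon as you tried to state what it means for $\sigma:\mcE(-D)\to\PXF^1(\nabla)$ to intertwine the connections. Likewise, reflexivity of $\mcE$ is used not to extend identities across codimension two, but to realize $\PX^1(\mcE)\simeq(\DX{1}(\mcE,\germe_X))^*$ so that $\PXF^1(\nabla)$ embeds in $\PX^1(\mcE)$ in the first place.
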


The terminology will be explained in Section \ref{Section: extensions}. Observe that the first part of Theorem \ref{THM: meromorphic extension} generalizes the classical result of Atiyah, which characterizes the existence of holomorphic connections on vector bundles in terms of their sheaf of first jets (see \cite[Theorem 5]{atiyah-57-zbMATH03128044}), and the second part generalizes \cite[Theorem 5.4]{fazoli25-arXiv:2505.11662}, concerning extensions of flat partial connections in the setting of smooth foliations and locally free sheaves.

\subsection{The Bott connection and codimension one transversely affine foliations}

Recall that the \emph{conormal sheaf} of a foliation $\F$ is defined as
\[
\CNF := \{ \omega \in \Omega_X^1 \mid \omega(v)=0 \text{ for all } v\in T_{\F} \}, 
\]
which, by the short exact sequence induced by $T_{\F}$, is naturally isomorphic to $(T_X/T_{\F})^*$. We define $\rm{rk}(\CNF)$ to be the \emph{codimension of $\F$}. Finally, we define the \emph{normal sheaf} of $\F$ as $N_{\F} := (\CNF)^*$.

Each of $T_X/T_{\F}$, $\CNF$, and $N_{\F}$ carries a canonical flat partial connection, called the \emph{Bott connection} (see \cite[Example 3.5]{fazoli25-arXiv:2505.11662}). Explicitly, on $\CNF$, the Bott connection is given by
\begin{equation*}
    \begin{split}
        \nabla_B: \CNF & \rightarrow \CTF^1(\CNF) \\
        \omega & \mapsto \big(v\in T_{\F} \mapsto \mc{L}_v(\omega) \big).
    \end{split}
\end{equation*}

Suppose $\F$ has codimension one. Let $\Aff(\C)$ be the group of affine transformations of the complex line $\C$, and let $\aff(\C)$ denote its Lie algebra. Let $\C\subset \aff(\C)$ the subalgebra corresponding to the subgroup $\C^* \subset \Aff(\C)$ of affine transformations fixing $0\in \C$. Recall that a \emph{singular transversely affine structure} for $\F$ is a collection of $\aff(\C)$-valued meromorphic 1-forms
\[
    \mc C = \{ \Omega_i: T_{U_i} \rightarrow \germe_{U_i}(D) \otimes \aff(\C) \},
\]
satisfying the following conditions:
\begin{enumerate}[label = (\roman*)]
    \item $\mc U = \{ U_i\}$ is a covering of $X$;
    \item\label{Item: aff(C)-valued 1-forms that are flat and induces the foliation} for each $i$, $\Omega_i$ is flat and the kernel of the induced morphism
    \[
        T_{U_i} \rightarrow \aff(\C)/\C \otimes \germe_{U_i}(D)
    \]
    is $\restr{T_{\F}}{U_i}$; and
    \item\label{Item: compatibility of local affine structures} for every pair $(i,j)$ with $U_i\cap U_j \neq \emptyset$, there exists a holomorphic map $g_{ij}:U_i\cap U_j \rightarrow \C^*$ such that 
    \begin{equation*}
        \Omega_i = \mathrm{Ad}(g_{ij}^{-1}) \circ \Omega_j + g_{ij}^*(\Omega_{\C^*}),
    \end{equation*}
    where $\rm{Ad}: \Aff(\C) \rightarrow \Aut(\aff(\C))$ is the adjoint representation and $\Omega_{\C^*}$ is the Maurer-Cartan form of the Lie group $\C^*$ (see \cite[Chapter 3, Definition 1.3]{sharpe-97-zbMATH00914851}).
\end{enumerate}
Recall that this is just a particular case of a transversely homogeneous structure, as explained in \cite[Section 2.4]{fazoli25-arXiv:2505.11662}.

Let us show how a singular transversely affine structure can be described as a flat meromorphic extension of the Bott connection. Consider the basis $\{e_1,e_2\}$ for the Lie algebra $\aff(\C)$ such that $[e_1,e_2]= -e_2$, with $e_1$ generating $\C \subset \aff(\C)$. Each meromorphic $\aff(\C)$-valued 1-form $\Omega_i=(\eta_i,\omega_i)$ satisfies \ref{Item: aff(C)-valued 1-forms that are flat and induces the foliation} if and only if,
\begin{equation}\label{Eq: transversely affine structure}
    \omega_i \in \CNF, d\omega_i = \eta_i \wedge \omega_i, \text{ and } d\eta_i =0;
\end{equation}
while the collection $\{\Omega_i\}$ satisfies the compatibility condition \ref{Item: compatibility of local affine structures} if and only if,
\begin{equation}\label{Eq: compatibility of local affine structures, 1-forms}
    \omega_i = \frac{1}{g_{ij}}\cdot \omega_j \text{ and } \eta_i = \eta_j - \frac{dg_{ij}}{g_{ij}}.
\end{equation}
Thus, Equation (\ref{Eq: compatibility of local affine structures, 1-forms}) encodes a meromorphic connection $\hatnabla$ on the conormal sheaf $\CNF$, given by $\hatnabla(\omega_i) = \eta_i \otimes \omega_i$, while Equation (\ref{Eq: transversely affine structure}) means that this meromorphic connection is flat and extends the Bott connection. 

For that reason, throughout this work, we adopt the point of view of \cite{cousin-pereira-14-zbMATH06399599} and consider a singular transversely affine structure for $\F$ as a flat meromorphic extension of the Bott connection. With this in mind, the following result is a direct consequence of Theorem \ref{THM: meromorphic extension}.

\begin{COR}\label{COR: transversely affine structures}
    Let $\F$ be a codimension one foliation on a smooth complex variety $X$. Let $D\ge 0$ be a divisor on $X$. Then, the correspondence of Theorem \ref{THM: meromorphic extension} induces a natural correspondence between:
    \begin{enumerate}[label = (\alph*)]
        \item transversely affine structures for $\F$ with poles on $D$; and
        \item horizontal meromorphic splittings with poles on $D$ of the short exact sequence associated with the first sheaf of transverse jets of the Bott connection.
    \end{enumerate}
\end{COR}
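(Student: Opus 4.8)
The plan is to specialise Theorem~\ref{THM: meromorphic extension} to the flat partial connection $(\CNF,\nabla_B)$ and to translate its conclusion through the dictionary, sketched in the paragraphs preceding the statement, identifying a singular transversely affine structure for $\F$ with a flat meromorphic extension of the Bott connection. First I would check that $(\CNF,\nabla_B)$ is within the scope of Theorem~\ref{THM: meromorphic extension}: the conormal sheaf $\CNF=(\NF)^{*}$ is a dual, hence reflexive, and since $X$ is smooth and $\F$ has codimension one it is in fact a line bundle, while $\nabla_B$ is a flat partial connection on it. As $\F$ has codimension one, the ``primed'' half of Theorem~\ref{THM: meromorphic extension} becomes available once the relevant polar divisor is known to be $\F$-invariant.

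Second, I would make precise the correspondence between a singular transversely affine structure $\mc C=\{\Omega_i=(\eta_i,\omega_i)\}$ for $\F$ with poles on $D$ and a flat meromorphic connection $\hatnabla$ on $\CNF$ with poles on $D$ extending $\nabla_B$. In one direction: the $\omega_i$ are meromorphic generators of the line bundle $\CNF$ with transition functions the $g_{ij}$ of condition~\ref{Item: compatibility of local affine structures}, and by \eqref{Eq: compatibility of local affine structures, 1-forms} the local assignments $\hatnabla(\omega_i):=\eta_i\otimes\omega_i$ agree on overlaps, hence glue to a meromorphic connection $\hatnabla$ on $\CNF$, which is flat and extends $\nabla_B$ precisely because of \eqref{Eq: transversely affine structure} ($d\eta_i=0$ being the flatness and $d\omega_i=\eta_i\wedge\omega_i$ being the compatibility with $\nabla_B$, by Frobenius). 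In the other direction: writing $\hatnabla(\omega)=\eta\otimes\omega$ for $\omega$ a local holomorphic $1$-form defining $\F$ recovers data $(\eta,\omega)$ satisfying \eqref{Eq: transversely affine structure}, with transition rule \eqref{Eq: compatibility of local affine structures, 1-forms}. This step is a reorganisation of material already in the text; the only thing to watch is that the pole bound $D$ matches on the two sides.

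Third, I would apply the $(a')\leftrightarrow(b')$ part of Theorem~\ref{THM: meromorphic extension} to $(\CNF,\nabla_B)$: flat meromorphic extensions of $\nabla_B$ with poles on an $\F$-invariant divisor correspond to horizontal meromorphic splittings, with poles on that divisor, of the short exact sequence \eqref{Eq: short exact sequence of the first sheaf of transverse jets} attached to $\PXF^{1}(\nabla_B)$, i.e.\ to the first sheaf of transverse jets of the Bott connection. Composing with the dictionary of the second step yields the asserted bijection, naturality being inherited from Proposition~\ref{P: extensions and splittings} and the functoriality of the translation.

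The one non-formal point, and the step I expect to cost the most, is reconciling ``$D\ge 0$ a divisor'' in the statement with the hypothesis ``$D$ is $\F$-invariant'' in the second half of Theorem~\ref{THM: meromorphic extension}. I would dispatch this with a separate lemma: the polar divisor of a flat meromorphic connection on $\CNF$ extending the Bott connection is automatically $\F$-invariant. The proof is local on the smooth locus of $\F$, where $T_{\F}$ is spanned by coordinate fields $\partial_{z_2},\dots,\partial_{z_n}$; since $\hatnabla_v=\nabla_{B,v}$ is holomorphic for every $v\in T_{\F}$, in a holomorphic trivialization $\hatnabla=d+\alpha$ with $\alpha(\partial_{z_j})$ holomorphic for $j\ge 2$, and flatness $d\alpha=0$ then forces the a priori meromorphic coefficient of $dz_1$ in $\alpha$ to be, up to holomorphic terms, a function of $z_1$ alone, so that every polar component of $\alpha$ is a leaf $\{z_1=\mathrm{const}\}$; extending across $\sing(\F)$ by normality of $X$ gives the claim. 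One then applies Theorem~\ref{THM: meromorphic extension} with $D$ replaced by this invariant polar divisor, which alters neither side of the correspondence in the statement.
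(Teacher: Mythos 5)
Your proposal is correct and follows essentially the same route as the paper: the paper adopts the Cousin--Pereira viewpoint that a singular transversely affine structure \emph{is} a flat meromorphic extension of $\nabla_B$ (via exactly the dictionary of Equations (\ref{Eq: transversely affine structure}) and (\ref{Eq: compatibility of local affine structures, 1-forms}) that you reconstruct), and then deduces the corollary directly from Theorem~\ref{THM: meromorphic extension} applied to $(\CNF,\nabla_B)$. The $\F$-invariance issue you isolate as the ``one non-formal point'' is handled in the paper by Proposition~\ref{P: flat meromorphic extension of a partial connection on line bundle}, whose proof is an equivalent local computation to the one you sketch.
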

Remark that the above corollary generalizes \cite[Corollary 5.5]{fazoli25-arXiv:2505.11662}, which concerns \emph{smooth} transversely affine structures for smooth foliations.

\subsection{Mendson-Pereira Conjecture}

In \cite[Conjecture 12.1]{mendson-pereira-22-arXiv:2207.08957}, W. Mendson and J.V. Pereira proposed the following conjecture about codimension one foliations on $\Pj^n$.

\begin{CONJ} Let $\F$ be a codimension one foliation on $\Pj^n$, $n\ge 3$. If $h^0(\Pj^n,\CTF^1)=0$, then $\F$ is a rational foliation.
\end{CONJ}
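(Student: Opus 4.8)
The plan is to deduce from the hypothesis $h^{0}(\Pj^n,\CTF^1)=0$ the existence of a transversely affine structure for $\F$ --- with poles on an $\F$-invariant divisor --- by means of Theorem~\ref{THM: meromorphic extension} (in the form of Corollary~\ref{COR: transversely affine structures}), and then to use the structure theory of transversely affine foliations on projective manifolds, together with the hypothesis once more, to rule out every possibility except that $\F$ be a rational foliation.

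Here is the reduction. Write $\CNF$ for the conormal line bundle of $\F$ --- on $\Pj^n$ it is the line bundle $\germe_{\Pj^n}(-\deg\F-2)$ --- and equip it with its Bott connection $\nabla_B$; let $\Sigma$ be the short exact sequence associated with the first sheaf of transverse jets of $\nabla_B$ (Equation~(\ref{Eq: short exact sequence of the first sheaf of transverse jets})). By Corollary~\ref{COR: transversely affine structures} it suffices to produce, for some $\F$-invariant $D\ge 0$, a horizontal meromorphic splitting of $\Sigma$ with poles on $D$. I would build it in two steps. First, since $H^{1}(\Pj^n,\germe_{\Pj^n}(m))=0$ for all $m$ (as $n\ge 2$), after tensoring $\Sigma$ by $\germe_{\Pj^n}(D)$ with $D$ large enough the sequence splits as a sequence of $\germe_{\Pj^n}$-modules; such a splitting is exactly a meromorphic connection $\hatnabla$ on $\CNF$ extending $\nabla_B$ with poles on $D$. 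Its curvature is a global meromorphic $2$-form which, vanishing on $T_\F\wedge T_\F$ since $\nabla_B$ is flat and $\F$ has codimension one, is a multiple $\theta\wedge\omega$ of the twisted $1$-form $\omega$ defining $\F$. Second, one kills this curvature: replacing $\hatnabla$ by $\hatnabla+g\,\omega$ for a meromorphic function $g$ leaves the restriction of $\hatnabla$ to $T_\F$ unchanged and changes the curvature by $d(g\,\omega)$, so one is reduced to solving, along the leaves of $\F$, an inhomogeneous linear equation $\nabla_B(s)=-(\theta|_{T_\F})\otimes\omega$ for a meromorphic section $s$ of $\CNF$ with poles on $D$. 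The obstruction to this lies in the first cohomology of the flat partial connection $\nabla_B$, which by the formalism of Section~\ref{Subsection: transverse jets} is computed from ordinary coherent cohomology of twists of $\CNF$ and $\CTF^1$ on $\Pj^n$; the vanishings $H^{i}(\Pj^n,\germe_{\Pj^n}(m))=0$ for $0<i<n$ (here $n\ge 3$ is used) together with $h^{0}(\Pj^n,\CTF^1)=0$ are precisely what makes this obstruction collapse, after possibly enlarging $D$ inside the family of $\F$-invariant divisors. This yields the transversely affine structure.

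Granting that $\F$ is transversely affine with poles on an $\F$-invariant divisor, I would conclude via the classification of such foliations \cite{cousin-pereira-14-zbMATH06399599}: either $\F$ is defined by a closed rational $1$-form, or $\F$ is the pull-back $\phi^{*}\G$ of a transversely affine foliation $\G$ on a projective surface $S$ under a dominant rational map $\phi\colon\Pj^n\dashrightarrow S$. In the pull-back case $S$ is rational (being dominated by $\Pj^n$ it is unirational, hence rational), the fibres of $\phi$ lie in leaves of $\F$, so $\CTF^1$ contains $\phi^{*}\Omega^{1}_{\G}$ as a subsheaf; hence $h^{0}(\Pj^n,\CTF^1)=0$ forces $H^{0}(S,\Omega^{1}_{\G})=0$, and one then excludes the surviving non-rational transversely affine foliations on rational surfaces using this extra vanishing and the constraints imposed on the transverse monodromy by the smallness of $\pi_{1}(\Pj^n\setminus D)$ for $n\ge 3$, so that $\G$, hence $\F$, has a rational first integral. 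In the closed-$1$-form case the polar divisor of the defining form is a union of $\F$-invariant hypersurfaces, and one shows --- again using $h^{0}(\Pj^n,\CTF^1)=0$ to constrain the residue data --- that all residues are rational; Darboux integration then furnishes a rational first integral for $\F$. In every case $\F$ is rational.

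The main obstacle is the construction of the transversely affine structure: squeezing a \emph{horizontal} (flat) meromorphic splitting of $\Sigma$ out of the cohomology of $\Pj^n$ and the single vanishing $h^{0}(\Pj^n,\CTF^1)=0$. The delicate points there are the choice of the $\F$-invariant divisor on which poles are allowed, and the passage from a mere $\germe_{\Pj^n}$-linear splitting of $\Sigma$ to a horizontal one --- which is exactly where the cohomology of the flat partial connection, and hence the hypothesis on foliated $1$-forms, intervenes. In the final step, the closed-$1$-form case (rationality of the residues) is a second point that will require genuine work.
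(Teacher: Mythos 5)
The statement you are addressing is not a theorem of the paper: it is the Mendson--Pereira conjecture, reproduced here as an open problem, and the paper only establishes its conclusion under additional hypotheses (Theorem \ref{THM: mendson-pereira conjecture}: instability, respectively decomposability, of $(\mc{P}_{\Pj^n/\F}^1(\nabla_B),\nabla_B^1)$ yields virtual transverse additivity, respectively rationality). Your proposal must therefore contain a gap, and it does, precisely at the point you yourself flag as delicate: the passage from an $\germe_{\Pj^n}$-linear meromorphic splitting of the sequence (\ref{Eq: short exact sequence of the first sheaf of transverse jets}) to a \emph{horizontal} one. Producing a meromorphic extension $\hatnabla$ with poles on a large divisor is indeed easy (vanishing of $H^1$ of line bundles on $\Pj^n$), but killing its curvature amounts to solving an inhomogeneous foliated equation of the form $\df g + g\,\alpha|_{T_{\F}}=-\theta|_{T_{\F}}$, whose obstruction lives in the first cohomology of the foliated de Rham complex twisted by a flat line bundle. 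That cohomology is not computed by the coherent cohomology of twists of $\germe_{\Pj^n}$: the relevant coherent sheaf is (a twist of) $\CTF^1\otimes\idealsing$, which is not a sum of line bundles, and the hypothesis $h^0(\Pj^n,\CTF^1)=0$ kills the \emph{homogeneous} solutions --- i.e.\ it gives uniqueness of a flat extension when one exists, exactly as in the proof of Lemma \ref{L: unstable rank 2 sheaf with partial connection} --- not the obstruction to existence of one. Conjecture \ref{Conjecture: equivalent to Wodson conjecture} in the paper makes precise that the existence of such a splitting (in the stronger, decomposable form) is \emph{equivalent} to the statement you want to prove; your first step silently assumes what is to be shown.

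The second half has the same defect in milder form. Even granting a transversely affine structure with poles on an $\F$-invariant divisor, rationality is not a formal consequence of the Cousin--Pereira classification together with $h^0(\Pj^n,\CTF^1)=0$: from a single invariant hypersurface one only obtains that $\F$ is virtually transversely additive (\cite[Proposition 12.2]{mendson-pereira-22-arXiv:2207.08957}), and reaching a rational first integral requires two invariant hypersurfaces in suitable position, i.e.\ the full decomposability of the first transverse jet (\cite[Proposition 12.3]{mendson-pereira-22-arXiv:2207.08957} and Proposition \ref{P: foliation with decomposable first jet of the Bott connection and without global foliated 1-forms}). Your sketch of how to ``exclude the surviving non-rational transversely affine foliations'' and to force rationality of the residues in the closed-one-form case is a restatement of the remaining difficulty, not an argument.
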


In this same article, the authors established partial results towards this conjecture (see \cite[Propositions 12.2 and 12.3]{mendson-pereira-22-arXiv:2207.08957}). Specifically, under the additional assumption of the existence of one (respectively, two) $\F$-invariant hypersurfaces, they proved that $\F$ is virtually transversely additive (respectively, rational foliation). In both cases, their argument relies on constructing, using these $\F$-invariant hypersurfaces, suitable flat meromorphic lifts of the Bott connection.

In this work, we provide a result relating the arguments in \cite{mendson-pereira-22-arXiv:2207.08957} with algebraic properties of the first sheaf of transverse jets of the Bott connection on the conormal sheaf.

\begin{THM}\label{THM: mendson-pereira conjecture}
    Let $\F$ be a codimension one foliation on $\Pj^n$, $n\ge 3$. Suppose that $h^0(\Pj^n, \CTF^1)=0$. Let $(\CNF, \nabla_B)$ be the Bott connection on the conormal sheaf. 
    \begin{enumerate}[label = (\Alph*)]
        \item If $\mc{P}_{\Pj^n/\F}^1(\nabla_B)$ is unstable, then $\F$ is virtually transversely additive.
        \item If $(\mc{P}_{\Pj^n/\F}^1(\nabla_B), \nabla_B^1)$ is decomposable (that is, isomorphic to the direct sum of two flat partial connections on line bundles), then $\F$ is a rational foliation.
    \end{enumerate}
\end{THM}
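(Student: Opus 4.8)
The plan is to convert the algebraic hypotheses on $V:=\PPjnF^1(\nabla_B)$ into the existence of $\F$-invariant hypersurfaces — one in case (A), two in case (B) — and then to invoke the partial results of Mendson and Pereira toward their conjecture, namely \cite[Propositions 12.2 and 12.3]{mendson-pereira-22-arXiv:2207.08957}. I will use throughout that $V$ has rank two with $\det V\cong\CNF^{\otimes 3}$, and that it sits in the short exact sequence (\ref{Eq: short exact sequence of the first sheaf of transverse jets}) of flat partial connections, whose kernel is $\CNF^{\otimes 2}$, whose quotient morphism $\pi\colon V\to\CNF$ has cokernel supported on $\sing(\F)$, and in which $\nabla_B^1$ induces the Bott connection $\nabla_B$ on the quotient $\CNF$. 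Since $\deg\CNF<0$ on $\Pj^n$, the kernel $\CNF^{\otimes 2}$ has slope strictly below $\mu(V)=\tfrac32\deg\CNF$. I will also use that $h^0(\Pj^n,\CTF^1)=0$ together with the torsion-freeness of $\CTF^1$ forces $h^0(\Pj^n,\CTF^1\otimes\germe_{\Pj^n}(k))=0$ for all $k\le 0$, and that $c_1(\CNF)\neq 0$, so $\CNF$ admits no holomorphic flat connection extending $\nabla_B$; by Corollary \ref{COR: transversely affine structures} this last observation says that (\ref{Eq: short exact sequence of the first sheaf of transverse jets}) has no holomorphic horizontal splitting.

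For part (A), I would take $L\subset V$ to be the maximal destabilizing subsheaf: a saturated rank-one subsheaf, hence a line bundle, with $\mu(L)>\mu(V)$. The crucial step is to show that $L$ is preserved by $\nabla_B^1$. For this I would observe that the second fundamental form of $L$, the $\germe_{\Pj^n}$-linear composite $L\xrightarrow{\nabla_B^1}V\otimes\CTF^1\to(V/L)\otimes\CTF^1$, is a global section of $\CTF^1\otimes\germe_{\Pj^n}(k)$ with $k=\deg\det V-2\deg L<0$ (the inequality being precisely the destabilization of $L$), hence vanishes by the cohomological vanishing above. Consequently $L$ inherits a flat partial connection and $\pi|_L\colon L\to\CNF$ is horizontal. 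It is nonzero — otherwise $L\subseteq\CNF^{\otimes 2}$ and saturation would give $L=\CNF^{\otimes 2}$, contradicting $\mu(L)>\mu(V)>\mu(\CNF^{\otimes 2})$ — and it is not an isomorphism — otherwise $L\hookrightarrow V$ would be a holomorphic horizontal splitting of (\ref{Eq: short exact sequence of the first sheaf of transverse jets}), which was excluded. Regarded as a flat section of the flat partial connection on $L^*\otimes\CNF$, the morphism $\pi|_L$ then vanishes along an effective, nonzero, $\F$-invariant divisor $H$. Applying \cite[Proposition 12.2]{mendson-pereira-22-arXiv:2207.08957} to $\F$ and the invariant hypersurface $H$ gives that $\F$ is virtually transversely additive.

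For part (B), I would write $(V,\nabla_B^1)\cong(L_1,\nabla_1)\oplus(L_2,\nabla_2)$ with $L_1,L_2$ line bundles. Each inclusion $L_i\hookrightarrow V$ is horizontal, hence so is $\pi_i:=\pi|_{L_i}\colon L_i\to\CNF$. Arguing as in (A), $\pi_i$ cannot be an isomorphism (that would produce a holomorphic horizontal splitting) and $\pi_i\neq 0$ (else $L_i\subseteq\CNF^{\otimes 2}$ forces $L_i=\CNF^{\otimes 2}$ and $L_j\cong\CNF$, again a holomorphic horizontal splitting); hence each $\pi_i$ vanishes along a nonzero $\F$-invariant hypersurface $H_i$. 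It remains to see that $H_1$ and $H_2$ share no component: a common component $E$ would make $\pi$ non-surjective along $E$, but $E\not\subseteq\sing(\F)$ since $\codim_{\Pj^n}\sing(\F)\ge 2$, contradicting that the cokernel of $\pi$ is supported on $\sing(\F)$. So $\F$ admits two distinct $\F$-invariant hypersurfaces, and \cite[Proposition 12.3]{mendson-pereira-22-arXiv:2207.08957} yields that $\F$ is a rational foliation.

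The main obstacle, I expect, is the flatness of the destabilizing subsheaf in part (A): it is here that the hypothesis $h^0(\Pj^n,\CTF^1)=0$ is genuinely used — via the vanishing of $h^0(\CTF^1\otimes\germe_{\Pj^n}(k))$ for $k\le 0$ together with the slope estimate on the second fundamental form — and without it the destabilizing line subbundle need not be $\nabla_B^1$-invariant, so the reduction to an invariant hypersurface would break down. A secondary, more technical difficulty is the control of the behaviour of (\ref{Eq: short exact sequence of the first sheaf of transverse jets}) near $\sing(\F)$ that is required to ensure the divisors produced are genuine (nonzero) hypersurfaces in (A) and genuinely distinct in (B); this rests on $c_1(\CNF)\neq 0$ and $\codim_{\Pj^n}\sing(\F)\ge 2$.
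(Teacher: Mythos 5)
Your argument is correct and follows essentially the same route as the paper: part (A) is Lemma \ref{L: unstable rank 2 sheaf with partial connection} (killing the second fundamental form of the destabilizing line subsheaf via $h^0(\Pj^n,\CTF^1\otimes\germe_{\Pj^n}(k))=0$ for $k\le 0$) combined with Proposition \ref{P: foliation with unstable first jet of the Bott connection and without global foliated 1-forms}, and part (B) is Proposition \ref{P: foliation with decomposable first jet of the Bott connection and without global foliated 1-forms}, including the use of $\codim \mathrm{Spec}(\idealpers(\F))\ge 2$ to guarantee the two invariant divisors share no component. The only (harmless) divergence is in the endgame of (A), where you pass directly to the invariant hypersurface and invoke \cite[Proposition 12.2]{mendson-pereira-22-arXiv:2207.08957}, which is exactly the alternative the paper itself records in the remark following that proposition, rather than concluding via the horizontal splitting and Theorem \ref{T: flat meromorphic extensions}.
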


\subsection{Pencil of curves}

Let $C_1=\{F_1=0\}, C_2=\{F_2=0\} \subset \Pj^2$ be two curves with no common irreducible component, and of the same degree $d$, in the projective plane. Recall that the \emph{pencil generated by $C_1$ and $C_2$} is the family of curves
\[
C_{(a:b)} := \{a\cdot F_1+b\cdot F_2 =0 \mid (a:b)\in \Pj^1 \} \subset \Pj^2.
\]
This pencil naturally induces a foliation of degree $2d-2$ on $\Pj^2$, such that its leaves are contained in the member of the pencil. We refer to this foliation as the \emph{pencil generated by $C_1$ and $C_2$} as well.

For a generic choice of curves, the singularities of the pencil generated by $C_1$ and $C_2$ are of two types only: there are $d^2$ radial singularities determined by the intersection of $C_1$ and $C_2$, while the remaining singularities are locally given by a closed 1-form. 

Motivated by this behavior, we say that a foliation $\F$ on $\Pj^2$ \emph{has singularities of pencil type} if every singularity $p\in \sing(\F)$ is of one of the following types:
\begin{enumerate}[label = -]
    \item in a neighborhood of $p$, there  exists a system of coordinates $(x,y)$ where $\F$ is given by the 1-form $\omega = xdy - ydx$ (that is, a radial singularity); or
    \item in a neighborhood of $p$, $\F$ is given by a closed holomorphic 1-form.
\end{enumerate}

In this work, we prove the following results about foliations with singularities of pencil type.

\begin{THM}\label{THM: pencil 1}
    Let $\F$ be a degree $d$ foliation on $\Pj^2$ with singularities of pencil type. If $\F$ admits at most $(d+2)^2/4$ radial singularities, then $\F$ admits an $\F$-invariant curve of degree at most $(d+2)/2$ passing through all the radial singularities.
\end{THM}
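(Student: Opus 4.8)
The plan is to construct a singular transversely affine structure for $\F$ out of the first transverse jet of the Bott connection, and to recover the desired curve as (a component of) its polar divisor.

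Write $\CNF\cong\germe_{\Pj^2}(-(d+2))$, with Bott connection $\nabla_B$. By Corollary \ref{COR: transversely affine structures} it suffices to produce a singular transversely affine structure for $\F$, i.e. a horizontal meromorphic splitting of the short exact sequence associated with $\PPjtwoF^1(\nabla_B)$. Such a splitting exists locally about every singular point because $\F$ has singularities of pencil type: about a non‑radial singularity $\F=\{\mathrm{d}g=0\}$ and the split holomorphic datum $(\eta,\omega)=(0,\mathrm{d}g)$ works, while about a radial singularity $\omega=x\,\mathrm{d}y-y\,\mathrm{d}x$ one checks that, for every $\rho\in\C$, the datum $\eta=\rho\,\tfrac{\mathrm{d}x}{x}+(2-\rho)\,\tfrac{\mathrm{d}y}{y}$ is flat and extends $\nabla_B$. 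I would then glue these into a global structure; the obstruction to doing so is a cohomology class attached to $\PPjtwoF^1(\nabla_B)$, and this is the point at which the hypothesis $\#\{\text{radial singularities}\}\le (d+2)^2/4$ is used — it bounds the numerical invariants of $\PPjtwoF^1(\nabla_B)$ enough for the obstruction to vanish.

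Suppose such a structure is produced, with associated flat meromorphic connection $\hatnabla$ on $\CNF$ and polar divisor $D=\sum_i\rho_iD_i$. Each $D_i$ is $\F$‑invariant. At a radial singularity $p$, the local computation above shows that any local structure must be singular there, with the residues along the local branches of the polar set summing to $2$; hence some component of $D$ passes through $p$, and therefore $D$ meets every radial singularity. At a node‑type singularity on two components $D_i,D_j$ the residues satisfy $\rho_i=\rho_j$, while at a radial singularity on two components they satisfy $\rho_i+\rho_j=2$; since any two of the curves $D_i$ intersect in $\Pj^2$, these identities propagate. Combining them with the residue relation $\sum_i\rho_i\deg D_i=d+2$ (the residue theorem on $\Pj^2$ for $\hatnabla$), with the count $\#(\sing\F\cap D_i)=\deg D_i\,(d+2-\deg D_i)$ of singularities of $\F$ on a smooth invariant curve, and with the bound $\#\{\text{radial singularities}\}\le(d+2)^2/4$, one concludes that the radial singularities all lie on a reduced $\F$‑invariant subcurve $C\subseteq D_{\mathrm{red}}$ with $\deg C\le\tfrac{d+2}2$; in the extremal case $\#\{\text{radial singularities}\}=(d+2)^2/4$ this forces $\F$ to be a pencil of degree‑$\tfrac{d+2}{2}$ curves, with $C$ one of its members.

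I expect the main obstacle to be the global existence of the transversely affine structure — equivalently, showing that the short exact sequence of $\PPjtwoF^1(\nabla_B)$ splits horizontally over the function field in a way that extends to a controlled meromorphic splitting — since this is exactly where the bound on the number of radial singularities must be brought to bear (through the numerical invariants of $\PPjtwoF^1(\nabla_B)$). The remaining steps — invariance of the polar components, the fact that $D$ meets every radial point, and the residue–Bézout bookkeeping pinning the degree to $\tfrac{d+2}{2}$ — are more routine, although the last one still requires care to handle the possibility that the polar divisor has several components with residues other than $2$.
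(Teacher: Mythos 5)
Your overall strategy---produce a flat meromorphic extension of the Bott connection and extract the invariant curve from it---is in the right spirit, but the step that carries all the weight is missing, and it is not the step you flag. The paper does not obtain the splitting by killing a cohomological obstruction to gluing local affine structures. Instead, it works with the rank-two sheaf $\PPjtwoF^1(\nabla_B)$ directly: from the short exact sequence $0 \to \CNF^{\otimes 2} \to \PPjtwoF^1(\nabla_B) \to \CNF\otimes\idealpers(\F) \to 0$ (Proposition \ref{P: codimension one, ideal of persistent singularities}) and the observation that for pencil-type singularities $\idealpers(\F)$ is the reduced ideal of the radial points, one computes $c_1(\PPjtwoF^1(\nabla_B))=-3(d+2)$ and $c_2(\PPjtwoF^1(\nabla_B))=2(d+2)^2+\#\{\text{radial sing.}\}$. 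The hypothesis $\#\{\text{radial sing.}\}\le (d+2)^2/4$ is exactly what makes $c_1^2\ge 4c_2$, so Schwarzenberger's lemma forces $\PPjtwoF^1(\nabla_B)$ to be unstable; the destabilizing saturated sub-line-bundle $\mcL\simeq\CNF(-D)$ \emph{is} the meromorphic splitting, with $\deg D\le (d+2)/2$ read off from $2c_1(\mcL)\ge c_1(\PPjtwoF^1(\nabla_B))$ and with $D$ passing through the radial points because $\mcL$ maps into $\idealpers(\F)\otimes\CNF$. Your proposal replaces this with ``the obstruction is a cohomology class attached to $\PPjtwoF^1(\nabla_B)$ and the numerical bound makes it vanish,'' which is not an argument: the set of transversely affine structures is not a torsor under a coherent sheaf (the local flat models at a radial point form a one-parameter family $\rho\mapsto\eta_\rho$, so there is no canonical \v{C}ech cocycle), and no mechanism is offered by which the count of radial points would annihilate such a class.

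A second, related gap is the order of operations. The splitting produced by instability is a priori only meromorphic, not horizontal; the paper proves flatness afterwards (Lemma \ref{L: extension of the Bott connection with low degree and certain singularities}), and that lemma needs \emph{both} the pencil-type hypothesis \emph{and} the already-established bound $\deg D\le (d+2)/2$ (so that $h^0$ of the relevant twist of $\CTF^1\otimes\idealsing\otimes\CNF$ vanishes). Only then does Proposition \ref{P: flat meromorphic extension of a partial connection on line bundle} give $\F$-invariance of $D$. Your route assumes flatness from the outset and then tries to recover the degree bound by residue and B\'ezout bookkeeping on the polar divisor; but a transversely affine structure can a priori have polar divisor of arbitrarily large degree, and the residue identities you list do not by themselves isolate a subcurve of degree $\le (d+2)/2$ through all radial points. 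So both halves of your sketch---existence of the structure and extraction of the low-degree curve---would need the instability computation that is absent from the proposal.
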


\begin{THM}\label{THM: pencil 2}
    Let $\F$ be a foliation of degree $2d-2$ on $\Pj^2$. Suppose that $\F$ is a foliation with singularities of pencil type, with exactly $d^2$ radial singularities determined by the intersection of two curves $C_1$ and $C_2$ of degree $d$. Additionally, suppose that no curve of degree less than $d$ passes through all the radial singularities. Then $\F$ is the pencil of curves determined by $C_1$ and $C_2$.
\end{THM}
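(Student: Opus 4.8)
\emph{Proof plan.} The strategy is to first produce, via Theorem~\ref{THM: pencil 1}, an $\F$-invariant curve that must be a member of the pencil, and then to promote this to a rational first integral for $\F$ equal to $F_1/F_2$.

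First, apply Theorem~\ref{THM: pencil 1} with $2d-2$ in place of $d$: since $\F$ has degree $2d-2$ and exactly $d^2=\big((2d-2)+2\big)^2/4$ radial singularities, it yields an $\F$-invariant curve $C$ of degree at most $\big((2d-2)+2\big)/2=d$ through all the radial singularities. By hypothesis no curve of degree $<d$ passes through all of them, so $\deg C=d$. Because $\F$ has exactly $d^2$ radial singularities while $C_1\cap C_2$ consists of at most $d^2$ points (B\'ezout, $C_1$ and $C_2$ having no common component), the set $Z:=C_1\cap C_2$ of radial singularities is a \emph{reduced} complete intersection of $F_1$ and $F_2$; hence its (saturated) homogeneous ideal is $(F_1,F_2)$, whose degree-$d$ part is exactly $\C F_1\oplus\C F_2$. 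As the degree-$d$ equation of $C$ vanishes on $Z$, it lies in $\C F_1\oplus\C F_2$, so $C=C_{(a:b)}$ is a member of the pencil. Renaming $aF_1+bF_2$ as $F_1$ (and exchanging $F_1,F_2$ if $a=0$), I may assume $C_1=\{F_1=0\}$ is itself $\F$-invariant; note that $C_1$ is smooth at each $p\in Z$, being the line $\{F_1=0\}$ in the local coordinates $F_1,F_2$ at $p$.

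Next I would prove that $\F$ is defined by a \emph{closed rational $1$-form} $\eta$ with polar divisor supported on $C_1$ — which, in the language of Corollary~\ref{COR: transversely affine structures} and Theorem~\ref{THM: meromorphic extension}, is a flat meromorphic extension of the Bott connection $\nabla_B$ on the line bundle $\CNF$, with poles on $2C_1$. The local flat models exist everywhere: at a regular point or a non-radial singularity $q$, where $\F$ is given by a closed \emph{holomorphic} $1$-form $dg$ (a flat local section of $\nabla_B$, since $dg(v)=0$ for $v\in T_\F$), the extension is holomorphic; at a radial singularity $p\in Z$, the $\F$-invariant curve $C_1$ is smooth at $p$, hence the line $\{\xi=0\}$ in coordinates $(\xi,\zeta)$ in which $\F$ is radial, so near $p$ one has $\F=\{d(\zeta/\xi)=0\}$ and the closed form $d(\zeta/\xi)=(\xi\,d\zeta-\zeta\,d\xi)/\xi^{2}$ defines $\F$ with a pole of order exactly two along $C_1$ and no other pole. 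Patching these yields a global closed rational $1$-form $\eta$ defining $\F$ with polar divisor $\le 2C_1$. One then observes that all residues of $\eta$ along the components of $C_1$ vanish: near each $p\in Z$, a closed rational form defining $\F=\{d(\zeta/\xi)=0\}$ must equal $d\big(H(\zeta/\xi)\big)$ for a one-variable polynomial $H$, hence is locally exact and has zero residue along the component of $C_1$ through $p$, and every component of $C_1$ meets $C_2$ so contains a point of $Z$. Thus $\eta=d\varphi$ for a rational function $\varphi$ with polar divisor $\le C_1$; writing $\varphi=G/F_1$ with $\deg G=d$, the pencil $\{G-cF_1=0\}_{c\in\Pj^1}$ has base locus $\{G=0\}\cap C_1$, which must be the radial singular set $Z=C_1\cap C_2$. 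Then $G|_{C_1}$ and $F_2|_{C_1}$ are sections of $\germe_{C_1}(d)$ with the same divisor $Z$ (of degree $d^2=\#Z$), hence proportional, so $G\in\C F_2+\C F_1$; therefore the pencil $\{G-cF_1=0\}$ is the pencil generated by $C_1$ and $C_2$, and $\F=\G$.

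The step I expect to be the main obstacle is the patching in the previous paragraph: gluing the local flat meromorphic extensions of the Bott connection into a global one. All the local obstructions vanish, so the global obstruction lives in a first cohomology group on $\Pj^2$; the work is to identify this group through the short exact sequence for $\mc{P}_{\Pj^2/\F}^1(\nabla_B)$ of Theorem~\ref{THM: meromorphic extension} and to check that it dies after twisting by $\germe(2C_1)$ — which is exactly what permits the gluing and makes the resulting splitting horizontal. A subsidiary, essentially routine, issue is handling degenerate configurations (e.g.\ $C_1$ reducible or non-reduced, or $\{G=0\}$ meeting $C_1$ non-transversally), which the same numerical constraints on the radial singularities exclude or reduce to the main case.
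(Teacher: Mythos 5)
Your step (a) --- producing via Theorem~\ref{THM: pencil 1} an $\F$-invariant curve of degree exactly $d$ through all radial singularities, and identifying it with a member $C_1$ of the pencil through the saturated ideal $(F_1,F_2)$ --- is correct. The genuine gap is in step (b), the construction of the global closed rational $1$-form, and the mechanism you propose for it does not work as stated. You suggest twisting the short exact sequence of $\PPjtwoF^1(\nabla_B)$ by $\germe(2C_1)$ so that the $H^1$ obstruction to a splitting dies, ``which \ldots makes the resulting splitting horizontal.'' The first half is fine: the obstruction lies in $H^1(\CNF(2C_1))=H^1(\germe_{\Pj^2})=0$, so a meromorphic splitting with poles on $2C_1$ exists. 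But the second half is unjustified: nothing forces that splitting to be horizontal. The paper's flatness criterion (Lemma~\ref{L: extension of the Bott connection with low degree and certain singularities}) applies only when $\deg D\le c_1(\NF)/2=d$, and $\deg(2C_1)=2d$ is outside that range --- the curvature of the resulting extension lands in $H^0(\CTF^1\otimes\idealsing\otimes\CNF(4C_1))=H^0(\CTF^1\otimes\idealsing\otimes\germe_{\Pj^2}(2d))$, which has no reason to vanish. The alternative reading of your step (b), gluing the local models $d(\zeta/\xi)$ and $dg$ directly, is a torsor problem under the \emph{non-coherent} sheaf of closed local sections of $\CNF(2C_1)$; its $H^1$ is not the cohomology of a line bundle, you do not identify it, and the hypotheses must enter somewhere since the conclusion fails without them.

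The fix, within your own strategy, is to take poles on $C_1$ rather than $2C_1$: the obstruction to a splitting $\sigma:\CNF(-C_1)\to\PPjtwoF^1(\nabla_B)$ lies in $H^1(\CNF(C_1))=H^1(\germe_{\Pj^2}(-d))=0$, and now $\deg C_1=d=c_1(\NF)/2$, so Lemma~\ref{L: extension of the Bott connection with low degree and certain singularities} gives flatness. Writing $\hatnabla(\omega)=\beta\otimes\omega$ with $\beta$ closed with simple poles on $C_1$, the radial local model forces the residue of $\beta$ along each component of $C_1$ to equal $2$ (every component contains a point of $Z$, where it is smooth and transverse to $C_2$ by B\'ezout), so $\beta-2\,dF_1/F_1$ is closed, holomorphic, hence zero; then $\omega/F_1^{2}$ is your $\eta$ and your step (c) goes through. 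Note that the paper's proof is different again and shorter: from $h^0(\germe_{\Pj^2}(-d))=h^1(\germe_{\Pj^2}(-d))=0$ it deduces $H^0\big(\PPjtwoF^1(\nabla_B)(3d)\big)\simeq H^0(\germe_{\Pj^2}(d)\otimes\idealpers)=\C F_1\oplus\C F_2$, so \emph{every} member of the pencil yields a saturated sub-line bundle $\CNF(-C)\subset\PPjtwoF^1(\nabla_B)$, hence by Theorem~\ref{T: meromorphic extensions}, Lemma~\ref{L: extension of the Bott connection with low degree and certain singularities} and Proposition~\ref{P: flat meromorphic extension of a partial connection on line bundle} an $\F$-invariant curve; a degree-$(2d-2)$ foliation leaving every member of the pencil invariant is the pencil foliation. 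This bypasses both the closed-form construction and the residue analysis.
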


\subsection{Structure of the paper} This work is organized as follows. In Section \ref{Section: transverse differential operators} we briefly review the theory of differential operators between sheaves and define the sheaf of \emph{transverse differential operators between flat partial connections}.  In Section \ref{Section: jets of flat partial connections}, we recall the basic aspects of the theory of jets of sheaves and relate it to the theory of differential operators; we then define the sheaves of transverse jets of a flat partial connection on a reflexive sheaf and prove that they are naturally endowed with a flat partial connection. In Section \ref{Section: extensions}, we take a closer look at the first sheaf of transverse jets, relating it to the existence of (flat) meromorphic extensions of a flat partial connection. In Section \ref{S: foliations with no global foliated 1-forms}, we apply the theory we developed to study foliations on projective spaces without global foliated 1-forms, obtaining results in the direction of the Mendson-Pereira Conjecture. Finally, in Section \ref{S: Codimension one foliation on the projective plane with certain singularities}, we apply the presented theory to the problem of characterizing pencil of curves on the projective plane; in this context, we show that a generic foliation with singularities of pencil type are indeed pencil of curves.

\subsection{Acknowledgments} This work would not exist without the guidance of my PhD advisor, Jorge Vitório Pereira, to whom I am deeply grateful. I also thank João Pedro dos Santos, who patiently discussed preliminary versions of this work with me and helped with several suggestions of all kinds. I acknowledge the support of FAPERJ (Grant number E26/201.353/2023) and CAPES (Grant number 88887.184306/2025-00).

\section{Transverse differential operators} \label{Section: transverse differential operators}

\subsection{Differential operators} \label{Subsection: differential operators}
Let $X$ be a variety, and let $\mcE,\mcE'$ be sheaves of $\germe_X$-modules. Let us define differential operators from $\mcE$ to $\mcE'$ inductively as follows. A $\C$-linear morphism $D: \mcE\rightarrow \mcE'$ is a \emph{differential operator of order $\le k$} if:
\begin{enumerate}[label = -]
    \item for $k=0$, $D$ is $\germe_X$-linear;
    \item for $k\ge 1$, for every $f\in \germe_X$, the $\C$-linear morphism
    \begin{equation*}
        \begin{split}
            [D,f] : \mcE & \rightarrow \mcE' \\
            s & \mapsto D(f\cdot s) - f \cdot D(s) \\
        \end{split}
    \end{equation*}
    is a differential operator of order $\le k-1$.
\end{enumerate}
We denote by $\DX{k}(\mcE,\mcE')$ the sheaf of differential operators of order $\le k$ from $\mcE$ to $\mcE'$. This sheaf is naturally endowed with two structures of $\germe_X$-module. The first one, called the \emph{left} or the \emph{canonical} structure, is given by
\[
(f \cdot D)(s) := f \cdot D(s), \forall f\in \germe_X, D\in \DX{k}(\mcE,\mcE'), s\in \mcE.
\]
The second structure, referred to as the \emph{right} structure, is defined by
\[
(D\cdot f)(s) := D (f \cdot s) , \forall f\in \germe_X, D\in \DX{k}(\mcE,\mcE'), s\in \mcE.
\]
In the case where $\mcE$ are $\mcE'$ coherent sheaves, we have that $\DX{k}(\mcE,\mcE')$ is also coherent with respect to both canonical and right structures (see \cite[Proposition 16.8.6]{ega-VI-zbMATH03245973}).

\begin{rmk}
    In Section \ref{Subsection: jets and differential operators} we provide an alternative definition of the sheaf of differential operators, which is the standard definition appearing both in \cite[Definition 2.1]{berthelot-78-zbMATH03595321} and \cite[Definition 16.8.1]{ega-VI-zbMATH03245973}. The equivalence of these definitions can be found in \cite[Proposition 16.8.8]{ega-VI-zbMATH03245973}. We adopt the recursive definition here, as it is the most suitable for proving the main theorem of this section, namely Theorem \ref{T: connection on the transverse differential operators}. 
\end{rmk}

\subsection{Composition of differential operators}

Let $\mcE,\mcE',\mcE''$ be sheaves of $\germe_X$-modules. Given differential operators $D\in \DX{k}(\mcE,\mcE')$ and $D'\in \DX{l}(\mcE',\mcE'')$, their composition defines a $\C$-linear morphism $D'\circ D: \mcE\rightarrow \mcE'$. Observing that
\[
    [D'\circ D,f] = [D',f] \circ D + D \circ [D',f], \forall f\in \germe_X,
\]
it follows by induction that $D'\circ D $ is a differential operator of order $\le k+l$. Moreover, for every $f\in \germe_X$, we have that $(D'\cdot f) \circ D = D' \circ(f \cdot D)$. Therefore, the composition of differential operators corresponds to the $\germe_X$-linear morphism
\begin{equation*}
    \begin{split}
        \delta_{l,k}: \DX{l}(\mcE',\mcE'') \otimes \DX{k}(\mcE, \mcE') & \rightarrow \DX{k+l}(\mcE,\mcE'') \\
        D' \otimes D & \mapsto D' \circ D,
    \end{split}
\end{equation*}
where the tensor product is taken with respect to the canonical $\germe_X$-module structure of $\DX{k}(\mcE, \mcE')$, and the right $\germe_X$-module structure of $\DX{l}(\mcE',\mcE'')$.

\subsection{Partial connections as differential operators}
Let $\F$ be a foliation on a complex smooth variety $X$, and let $(\mcE,\nabla)$ be a partial connection on a coherent sheaf. Since $\nabla$ satisfies the Leibniz rule (see Equation (\ref{Eq: Leibniz rule})), it follows that, for every $v\in T_{\F}$, the morphism $\nabla_v:\mcE \rightarrow \mcE$  is a differential operator of order $\le 1$. Thus, $\nabla$ induces a $\germe_X$-linear morphism (with respect to the canonical structure)
\begin{equation*}
    \begin{split}
        \nabla: T_{\F} & \rightarrow \DX{1}(\mcE,\mcE) \\
        v & \mapsto \nabla_v,
    \end{split}
\end{equation*}
which is injective since $\nabla_v \neq 0$ for every non-zero $v\in T_{\F}$. This allow us to see $T_{\F}$ as a $\germe_X$-submodule of $\DX{1}(\mcE,\mcE)$, but only with respect to the canonical structure of $\DX{1}(\mcE,\mcE)$, as the image of $T_{\F}$ under $\nabla$ is not preserved by the right multiplication on $\DX{1}(\mcE,\mcE)$. 

The flatness of $\nabla$ can also be described in terms of this inclusion. Note that $\DX{1}(\mcE,\mcE)$ is endowed with a natural Lie algebra structure, given by
\[
[D,D'] := D \circ D' - D' \circ D.
\]
Using this structure, it follows directly from the definition (see Equation (\ref{Eq: flatness})) that $\nabla$ is flat if and only if the inclusion $T_{\F} \rightarrow \DX{1}(\mcE,\mcE)$ is a morphism of Lie algebras.

\subsection{The sheaf of transverse differential operators}

Let $\F$ be a foliation on a complex smooth variety $X$, and let $(\mcE,\nabla)$ be a partial connection on a coherent sheaf. For every coherent sheaf $\mcE'$ and $k\ge 1$, we define the \emph{$k$-th prolongation of $\nabla$ with respect to $\mcE'$} as the system of differential operators
\begin{equation*}
    \begin{split}
        S^k(\nabla) & := \left\{ \sum D \circ \nabla_v \mid D\in \DX{k-1}(\mcE,\mcE'), v\in T_{\F}\right\} \subset \DX{k}(\mcE,\mcE') \\
        & = \mathrm{im}\Big(\delta_{k-1,1} \circ (\mathrm{id} \otimes \nabla): \DX{k-1}(\mcE,\mcE') \otimes T_{\F} \rightarrow \DX{k}(\mcE,\mcE')\Big)
    \end{split}
\end{equation*}
Additionally, we set $S^0(\nabla)=0$. By definition, $S^k(\nabla)$ is an $\germe_X$-submodule of $\DX{k}(\mcE,\mcE')$ (with respect to the canonical structure),  and since both $\mcE,\mcE'$ are coherent, it follows that $S^k(\nabla)$ is coherent as well.

For every $k\ge 0$, we define the \emph{sheaf of transverse differential operators from $(\mcE,\nabla)$ to $\mcE'$ of order $\le k$} as the quotient
\[
\DXF{k}(\nabla, \mcE') := \frac{\DX{k}(\mcE,\mcE')}{S^k(\nabla)}.
\]
For every $D\in \DX{k}(\mcE,\mcE')$, we abuse notation and denote its image by the natural projection $\DX{k}(\mcE,\mcE') \rightarrow \DXF{k}(\nabla,\mcE')$ also by $D$.  

\begin{rmk}
An element on $\DXF{k}(\nabla, \mcE')$ is not, strictly speaking, a well-defined differential operator on $\mcE$, since it arises from a quotient of $\DX{k}(\mcE,\mcE')$. Nevertheless, in the analytic category, restricting to the locus where $\F$ is smooth and $\mcE$ is locally free, an element $D\in \DXF{k}(\nabla, \mcE')$ defines, in the category of $\germe_{X/\F}$-modules, a  differential operator of order $\le k$
\[
D: \ker \nabla' \rightarrow \mcE'.
\]
Although our primary interest is in the sheaf $\DXF{k}(\nabla,\mcE')$ over the singular locus of the foliation (as explained in the introduction), this justifies the terminology of transverse differential operators for elements of $\DXF{k}(\nabla, \mcE')$.
\end{rmk}

\begin{ex}\label{Ex: first sheaf of transverse differential operators}
    Let $(\mcE,\nabla)=(\germe_X,\df + \eta \otimes \rm{id})$ for some $\eta \in H^0(\CTF^1)$, and let $\mcE' = \germe_X$. The natural short exact sequence
    \begin{equation}\label{Eq: short exact sequence of the sheaf of differential operators of order 1}
        0 \rightarrow \DX{0}(\germe_X,\germe_X) \simeq \germe_X \rightarrow \DX{1}(\germe_X,\germe_X) \rightarrow T_X \rightarrow 0
    \end{equation}
    induces the short exact sequence
    \begin{equation}\label{Eq: short exact sequence of the first sheaf of transverse differential operators}
        0\rightarrow \germe_X \rightarrow \DXF{1}(\df +\eta,\germe_X) \rightarrow T_X/T_{\F} \rightarrow 0.
    \end{equation}
    We claim that this short exact sequence is splitting if, and only if, $\eta$ belongs to the image of the restriction morphism $\mathrm{restr}: H^0(\Omega_X^1) \rightarrow H^0(\CTF^1)$. Indeed, if there exist $\eta_0\in H^0(\Omega_X^1)$ such that $\eta = \rm{restr}(\eta_0)$, then the morphism 
    \[
    \begin{split}
        \iota + \eta_0: T_X & \rightarrow \DX{1}(\germe_X,\germe_X) \\
        v & \mapsto v + \eta_0(v)
    \end{split}
    \]
    induces a splitting $T_X/T_{\F} \rightarrow \DXF{1}(\df +\eta,\germe_X)$ of the short exact sequence of Equation (\ref{Eq: short exact sequence of the first sheaf of transverse differential operators}). Conversely, a splitting $\DXF{1}(\germe_X,\germe_X) \rightarrow \germe_X$ of Equation (\ref{Eq: short exact sequence of the first sheaf of transverse differential operators})  induces a splitting $\DX{1}(\germe_X,\germe_X) \rightarrow \germe_X$ of Equation (\ref{Eq: short exact sequence of the sheaf of differential operators of order 1}), which must be of the form
    \[
    \begin{split}
        \phi: \DX{1}(\germe_X,\germe_X)\simeq T_X \oplus \germe_X & \rightarrow \germe_X \\
        v + f & \mapsto \eta_0(v) + f
    \end{split}
    \]
    for some $\eta_0 \in H^0(\Omega_X^1)$. Since $\phi(S^1(\df + \eta \otimes \rm{id}))=0$, it follows that $\mathrm{restr}(\eta_0) = \eta$.
\end{ex}

\subsection{The natural partial connection on the sheaf of transverse differential operators}

\begin{thm}\label{T: connection on the transverse differential operators}
    Let $\F$ be a foliation on a complex smooth variety $X$. Let $(\mcE,\nabla)$ be a flat partial connection on a coherent sheaf, and let $\mcE'$ be a coherent sheaf. Then, for every partial connection $\nabla'$ on $\mcE'$, there exists a partial connection $(\nabla,\nabla')^k$ on $\DXF{k}(\nabla,\mcE')$ defined by
    \[
    (\nabla,\nabla')^k_v(D) = \nabla_v' \circ D - D \circ \nabla_v , 
    \]
    for every $D \in \DXF{k}(\nabla,\mcE')$ and $v\in T_{\F}$. Moreover, if $\nabla'$ is flat, then $(\nabla,\nabla')^k$ is also flat.
\end{thm}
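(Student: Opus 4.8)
The plan is to build $(\nabla,\nabla')^k$ in four steps: (1) show that the formula $L_v(D):=\nabla'_v\circ D-D\circ\nabla_v$ sends $\DX{k}(\mcE,\mcE')$ to itself; (2) show $L_v$ preserves the submodule $S^k(\nabla)$, so that it descends to $\DXF{k}(\nabla,\mcE')$; (3) verify the descended map is a partial connection; (4) deduce flatness when $\nabla'$ is flat. For step (1), note that a priori $L_v(D)\in\DX{k+1}(\mcE,\mcE')$, and I would argue by induction on $k$. The base case $k=0$ is the familiar fact that the commutator of an $\germe_X$-linear map with connections is $\germe_X$-linear, which follows at once from the Leibniz rule (Equation \ref{Eq: Leibniz rule}) for $\nabla$ and $\nabla'$. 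For the inductive step, using $[\nabla_v,f]=[\nabla'_v,f]=v(f)\cdot\id$ (again Leibniz), one computes for every $f\in\germe_X$ that
\[
[L_v(D),f]=L_v([D,f])-[D,v(f)];
\]
since $[D,f],[D,v(f)]\in\DX{k-1}(\mcE,\mcE')$, the inductive hypothesis gives $L_v([D,f])\in\DX{k-1}(\mcE,\mcE')$, hence $[L_v(D),f]\in\DX{k-1}(\mcE,\mcE')$ and therefore $L_v(D)\in\DX{k}(\mcE,\mcE')$.

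For step (2), by $\C$-linearity it suffices to show $L_v(D_0\circ\nabla_w)\in S^k(\nabla)$ for $D_0\in\DX{k-1}(\mcE,\mcE')$ and $w\in T_\F$. Rewriting $\nabla'_v\circ D_0=L_v(D_0)+D_0\circ\nabla_v$, where $L_v(D_0)\in\DX{k-1}(\mcE,\mcE')$ by step (1), one obtains
\[
L_v(D_0\circ\nabla_w)=L_v(D_0)\circ\nabla_w+D_0\circ(\nabla_v\circ\nabla_w-\nabla_w\circ\nabla_v)=L_v(D_0)\circ\nabla_w+D_0\circ\nabla_{[v,w]},
\]
the last equality being the flatness of $\nabla$ (Equation \ref{Eq: flatness}). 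Both terms lie in $S^k(\nabla)$: the first because $L_v(D_0)\in\DX{k-1}(\mcE,\mcE')$, the second because $[v,w]\in T_\F$ by involutivity. Hence $L_v$ descends, giving a $\C$-linear map $(\nabla,\nabla')^k_v$ on $\DXF{k}(\nabla,\mcE')$.

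For step (3), the Leibniz rule for $(\nabla,\nabla')^k$ follows from $\nabla'_v\circ(f\cdot D)=v(f)\cdot D+f\cdot(\nabla'_v\circ D)$ and $(f\cdot D)\circ\nabla_v=f\cdot(D\circ\nabla_v)$ in the canonical module structure. The one subtlety is $\germe_X$-linearity in $v$: since $\nabla_{fv}=f\cdot\nabla_v$ and $\nabla'_{fv}=f\cdot\nabla'_v$, one gets $L_{fv}(D)=f\cdot(\nabla'_v\circ D)-D\circ(f\cdot\nabla_v)$, and $D\circ(f\cdot\nabla_v)=f\cdot(D\circ\nabla_v)+[D,f]\circ\nabla_v$ with $[D,f]\circ\nabla_v\in S^k(\nabla)$, so the identity $(\nabla,\nabla')^k_{fv}(D)=f\cdot(\nabla,\nabla')^k_v(D)$ holds modulo $S^k(\nabla)$, i.e.\ in $\DXF{k}(\nabla,\mcE')$; this is exactly where passing to the quotient is needed. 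Finally, for step (4), assuming $\nabla'$ flat, a direct expansion of the two double compositions gives
\[
(\nabla,\nabla')^k_v\big((\nabla,\nabla')^k_w(D)\big)-(\nabla,\nabla')^k_w\big((\nabla,\nabla')^k_v(D)\big)=(\nabla'_v\nabla'_w-\nabla'_w\nabla'_v)\circ D+D\circ(\nabla_w\nabla_v-\nabla_v\nabla_w),
\]
and applying flatness of $\nabla'$ to the first term and of $\nabla$ to the second identifies the right-hand side with $\nabla'_{[v,w]}\circ D-D\circ\nabla_{[v,w]}=(\nabla,\nabla')^k_{[v,w]}(D)$.

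The main obstacle is step (2): keeping the bracket inside $S^k(\nabla)$ is where both hypotheses — flatness of $\nabla$ (through $\nabla_v\circ\nabla_w-\nabla_w\circ\nabla_v=\nabla_{[v,w]}$) and involutivity of $T_\F$ (so that $[v,w]\in T_\F$) — are genuinely used, and it relies on the order drop established in step (1). Everything else is a careful but routine manipulation of differential operators and their two $\germe_X$-module structures.
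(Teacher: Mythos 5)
Your proposal is correct and follows essentially the same route as the paper: an induction on the order to show that $\nabla'_v\circ D-D\circ\nabla_v$ stays in $\DX{k}(\mcE,\mcE')$, the identity $L_v(D_0\circ\nabla_w)=L_v(D_0)\circ\nabla_w+D_0\circ\nabla_{[v,w]}$ (using flatness of $\nabla$ and involutivity of $T_\F$) to show $S^k(\nabla)$ is preserved, the observation that $[D,f]\circ\nabla_v\in S^k(\nabla)$ to get $\germe_X$-linearity in $v$ after passing to the quotient, and the same commutator expansion for flatness. Your explicit remark that linearity in $v$ only holds modulo $S^k(\nabla)$ is exactly the point the paper makes as well.
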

The proof of Theorem \ref{T: connection on the transverse differential operators} is given in Section \ref{S: proof of theorem}.

\begin{rmk}\label{Rmk: expression for the fpc on transverse diff ope}
    If $D = \phi_1 \circ w_1\circ \cdots \circ w_l \circ \phi_2 \in \DX{k}(\mcE,\mcE')$ for $\phi_1\in \mcHom(\germe_X, \mcE')$, $w_i\in T_X = \mathrm{Der}(\germe_X)$ and $\phi_2\in \mcHom(\mcE,\germe_X)$, a straightforward calculation shows that
    \begin{multline*}
        (\nabla,\nabla')^k_v(D) = \nabla'_v(\phi_1) \circ w_1\circ \cdots \circ w_l \circ \phi_2  + \sum_{i=1}^l \big(\phi_1 \circ w_1\circ \cdots \circ \Lie_v(w_i) \circ \cdots \circ w_l\big) \\ + \phi_1 \circ w_1\circ \cdots \circ w_l \circ \nabla_v^*(\phi_2).
    \end{multline*}
    This expression will be useful later in this text.
\end{rmk}

We denote the pair $(\DXF{k}(\nabla,\mcE'),(\nabla,\nabla')^k )$ by $\DXF{k}((\mcE,\nabla), (\mcE',\nabla'))$, and call it the \emph{sheaf of differential operators from $(\mcE,\nabla)$ to $(\mcE',\nabla')$ of order $\le k$}. Let us illustrate this construction through some examples.

\begin{ex} For any pair of flat partial connections $(\mcE,\nabla), (\mcE',\nabla')$, the sheaf of transverse differential operators of order $\le 0$ is simply $\mcHom(\mcE,\mcE')$, and the partial connection $(\nabla,\nabla')^{0}$ is given by 
\[
(\nabla,\nabla')^{0}_v(\phi) = \nabla'_v \circ \phi - \phi \circ \nabla_v, \forall \phi \in \mcHom(\mcE,\mcE'), v\in T_{\F}.
\]
That is, $(\nabla,\nabla')^{0}$ is just the natural partial connection on $\mcHom(\mcE,\mcE')$ (see \cite[Section 3.4]{fazoli25-arXiv:2505.11662}).
\end{ex}

\begin{ex} Let $(\mcE,\nabla) = (\mcE',\nabla') = (\germe_X,\df)$. The natural isomorphism $\DX{1}(\germe_X,\germe_X) \simeq T_X \oplus \germe_X$ induces an isomorphism 
\[
    \DXF{1}(\df, \germe_X) \simeq (T_X/T_{\F}) \oplus \germe_X.
\]
A direct calculation shows that, under this isomorphism, $(\df,\df)^1 = \nabla_B \oplus \df$, where $\nabla_B$ stands for the Bott connection on $T_X/T_{\F}$ (see \cite[Example 3.5]{fazoli25-arXiv:2505.11662}).
\end{ex}

\begin{ex}\label{Ex: connection on first sheaf of transverse differential operators}
    Following Example \ref{Ex: first sheaf of transverse differential operators}, let us consider the connection $\nabla' = \df$ on $\germe_X$. For every $v\in T_{\F}$ and $w+f \in \DX{1}(\germe_X,\germe_X)$, we compute that
    \begin{equation*}
        \begin{split}
            v \circ (w+f) - (w+f) \circ (v + \eta(v)) & = [v,w] + v(f) - (w+ f)\circ \eta(v)  \\
            & = [v,w] + v(f) - w(\eta(v)) - \eta(v) \cdot w - f \cdot \eta(v)
        \end{split}
    \end{equation*}
    Thus, $\DXF{1}(\df+ \eta\otimes \rm{id}, \germe_X)$ fits into a short exact sequence of partial connections:
    \[
    0 \rightarrow (\germe_X,\df - \eta \otimes \rm{id}) \rightarrow \DXF{1}(\df+\eta\otimes \rm{id}, \df) \rightarrow (T_X/T_{\F}, \nabla_B- \eta \otimes \rm{id}) \rightarrow 0,
    \]
    where in the category of sheaves this is the same short exact sequence of Equation (\ref{Eq: short exact sequence of the first sheaf of transverse differential operators}).
    Recall that the short exact sequence above is splitting (in the category of sheaves) if, and only if, $\eta \in \rm{im}\left(\mathrm{restr: }   H^0(\Omega_X^1) \rightarrow H^0(\CTF^1)\right)$. However, generally it is not splitting in the category of flat partial connections. For instance, for codimension one foliations, a straightforward calculation shows that the splitting determined by $\eta_0 \in H^0(\Omega_X^1)$ (as in the notation of Example \ref{Ex: first sheaf of transverse differential operators}) is horizontal if and only if $\eta_0$ is closed.
\end{ex}

\begin{rmk}
    The example above anticipates the results presented in Section \ref{Section: extensions}, where we relate splittings of a natural short exact sequence associated with the first sheaf of transverse jets of a flat partial connection to the existence of (flat) extensions (see Theorems \ref{T: meromorphic extensions} and \ref{T: flat meromorphic extensions}).
\end{rmk}

The Examples \ref{Ex: first sheaf of transverse differential operators}  and \ref{Ex: connection on first sheaf of transverse differential operators} illustrate how both the sheaf $\DXF{k}(\nabla,\mcE')$ and the connection $(\nabla,\nabla')^k$ depend on the connection $\nabla$ on $\mcE$. On the other hand, since the connection $\nabla'$ on $\mcE'$ is not involved in the construction of the sheaves $\DXF{k}(\nabla,\mcE)$, it remains to understand how it affects the partial connection $(\nabla,\nabla')^k$.

Given two partial connections $\nabla_1',\nabla_2'$ on $\mcE'$, their difference $\phi = \nabla_1' - \nabla_2'$ is an $\germe_X$-linear morphism from $\mcE'$ to $\CTF^1(\mcE')$. The composition of $\phi$ with a differential operator $D\in \DX{k}(\mcE,\mcE')$ induces an element $D \circ \phi \in \DX{k}(\mcE, \CTF^1(\mcE'))$, which is canonically identified with an element of $\CTF^1(\DX{k}(\mcE,\mcE')$. Hence, $\phi$ induces an $\germe_X$-linear morphism
\begin{equation*}
    \begin{split}
        \DX{k}(\mcE,\mcE') & \rightarrow \CTF^1(\DX{k}(\mcE,\mcE'))
    \end{split}
\end{equation*}
which, in turn, induces an $\germe_X$-linear morphism 
\begin{equation*}
    \begin{split}
        \tilde{\phi}: \DXF{k}(\nabla,\mcE') &  \rightarrow \CTF^1(\DXF{k}(\nabla,\mcE')) \\
        D & \mapsto \Big(v\in T_{\F} \mapsto (\nabla_1'-\nabla_2')_v \circ D \Big)
    \end{split}
\end{equation*}
Directly from the definition, it follows that
\[
    (\nabla,\nabla_1')^k = (\nabla,\nabla_2')^k + \tilde{\phi}.
\]

\begin{ex}
    Let $\eta \in H^0(\CTF^1)$ be a global foliated 1-form. Then, for every pair of partial connections $(\mcE,\nabla)$ and $(\mcE',\nabla')$, we have that
    \[
    (\nabla,\nabla' + \eta \otimes \mathrm{id})^k = (\nabla,\nabla')^k \otimes (\germe_X, \df + \eta \otimes \rm{id}). 
    \]
\end{ex}

\subsection{Proof of Theorem \ref{T: connection on the transverse differential operators}}\label{S: proof of theorem}

The first step is to verify that 
\begin{equation*}
    \begin{split}
        (\nabla,\nabla')^k_v: \DXF{k}(\nabla, \mcE') & \rightarrow \DXF{k}(\nabla, \mcE') \\
    D & \mapsto \nabla_v' \circ D - D\circ \nabla_v
    \end{split}
\end{equation*}
is well defined. This will follows directly from the two claims below.

\begin{claim}\label{Claim: constructions of the connections 1} Let $D\in \DX{k}(\mcE, \mcE')$ be a differential operator of order $\le k$, and $v \in T_{\F}$ be a vector tangent to the foliation.  Then 
\[
    [v,D] := \nabla_v' \circ D - D\circ \nabla_v
\]
is also a differential operator of order $\le k$.
\end{claim}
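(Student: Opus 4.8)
The plan is to prove the claim by induction on the order $k$, using only the Leibniz rule for $\nabla$ and $\nabla'$ together with the commutator identities for differential operators recalled above; note that flatness of $\nabla$ plays no role in this claim. For the base case $k=0$, the operator $D$ is $\germe_X$-linear, and a direct computation with the Leibniz rule gives, for all $f\in\germe_X$ and $s\in\mcE$,
\[
    [v,D](f\cdot s) = \nabla'_v\big(f\cdot D(s)\big) - D\big(v(f)\cdot s + f\cdot\nabla_v(s)\big) = f\cdot[v,D](s),
\]
the two occurrences of $v(f)\cdot D(s)$ cancelling. Hence $[v,D]$ is again $\germe_X$-linear, i.e.\ of order $\le 0$.

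For the inductive step, assume the statement for $k-1$ and take $D\in\DX{k}(\mcE,\mcE')$. By definition it suffices to show $[[v,D],f]\in\DX{k-1}(\mcE,\mcE')$ for every $f\in\germe_X$. Expanding $[v,D]=\nabla'_v\circ D - D\circ\nabla_v$ and applying the commutator identity for a composition, $[D_1\circ D_2,f]=[D_1,f]\circ D_2 + D_1\circ[D_2,f]$, together with the elementary facts $[\nabla'_v,f]=v(f)\cdot\id_{\mcE'}$ and $[\nabla_v,f]=v(f)\cdot\id_{\mcE}$ (both immediate from the Leibniz rule), one obtains
\[
    [[v,D],f] = \big(\nabla'_v\circ[D,f] - [D,f]\circ\nabla_v\big) + \big(v(f)\cdot D - D\circ(v(f)\cdot\id_{\mcE})\big).
\]
The first bracket is exactly $[v,[D,f]]$; since $[D,f]\in\DX{k-1}(\mcE,\mcE')$, the inductive hypothesis places it in $\DX{k-1}(\mcE,\mcE')$. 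For the second bracket, writing $D\circ(v(f)\cdot\id_{\mcE}) = v(f)\cdot D + [D,v(f)]$ shows it equals $-[D,v(f)]$, which lies in $\DX{k-1}(\mcE,\mcE')$ because $D$ has order $\le k$. Thus $[[v,D],f]$ is a sum of two operators of order $\le k-1$, completing the induction.

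I do not expect a genuine obstacle. The two points that require care are the bookkeeping between the canonical (left) and right $\germe_X$-module structures on $\DX{k}(\mcE,\mcE')$ when rewriting $D\circ(v(f)\cdot\id_{\mcE})$, and the structural reason the argument closes at all: commuting $\nabla_v$ or $\nabla'_v$ past multiplication by a function lowers the order by one, which is exactly what prevents $[v,D]$ from acquiring order $k+1$ instead of staying in order $\le k$.
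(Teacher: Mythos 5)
Your proof is correct and follows essentially the same route as the paper: induction on $k$, with the inductive step resting on the identity $[[v,D],f] = [v,[D,f]] - [D,v(f)]$, the first term handled by the inductive hypothesis and the second by the definition of order $\le k$. The remark that flatness of $\nabla$ is not needed here is also accurate; it only enters in the subsequent claim about $S^k(\nabla)$ being preserved.
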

\begin{proof}
   We proceed by induction on $k$. For $k=0$, observe that $D$ is $\germe_X$-linear. Thus, for every $f\in \germe_X$ and $s\in \mcE$, we have that 
\begin{align*}
[v,D](fs) & = \nabla_v' (f\cdot D(s)) - D(v(f) \cdot s + f \cdot \nabla_v(s)) \\
& =  v(f) \cdot D(s) + f \cdot \nabla'_v \circ D(s) - v(f)\cdot D(s) - f \cdot D\circ \nabla_v(s) \\
& =  f \cdot (\nabla_v' \circ D - D \circ \nabla_v)(s) = f\cdot [v,D](s).
\end{align*}
Hence, $[v,D]$ is $\germe_X$-linear and the claim holds for $k=0$.

Now assume that the claim holds for $k-1$, and consider the case $k$. For every $f\in \germe_X$, 
\begin{align*}
    [[v,D],f] & = [\nabla_v' \circ D,f] - [ D \circ \nabla_v,f] \\
    & = [\nabla_v',f]\circ D + \nabla_v' \circ [D,f] - [D,f] \circ \nabla_v -  D\circ [\nabla_v,f] \\
    & = -[D,v(f)] + [v,[D,f]].
\end{align*}
Since $[D,v(f)]$ is of order $\le k-1$ by definition, and $[v,[D,f]]$ is of order $\le k-1$ by the induction hypothesis, we conclude that $[[v,D],f]$ is of order $\le k-1$ for all $f\in \germe_X$. Then, $[v,D] \in \DX{k}(\mcE,\mcE')$ and the claim is true for $k$. Therefore, by induction, the claim holds for every $k\ge 0$.
\end{proof}

\begin{claim}\label{Claim: constructions of the connections 2} $[T_{\F}, S^k(\nabla)] \subset S^k(\nabla)$.
\end{claim}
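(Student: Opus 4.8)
The plan is to show that the bracket $[v, D]$ of a vector field $v \in T_{\F}$ with an element of $S^k(\nabla)$ lands back in $S^k(\nabla)$, so that the partial connection $(\nabla,\nabla')^k_v$ descends to the quotient $\DXF{k}(\nabla,\mcE')$. Recall that, by definition, $S^k(\nabla)$ is generated as an $\germe_X$-module (canonical structure) by operators of the form $D \circ \nabla_w$ with $D \in \DX{k-1}(\mcE,\mcE')$ and $w \in T_{\F}$. Since $[v,-]$ is $\C$-linear but only a derivation up to the canonical $\germe_X$-structure, I first need to check that $[v,-]$ interacts reasonably with the canonical $\germe_X$-multiplication on $S^k(\nabla)$; concretely, from Claim \ref{Claim: constructions of the connections 1} and the computation $[v, f\cdot E] = v(f)\cdot E + f\cdot [v,E]$ (which is just the Leibniz rule, valid because $\nabla'_v$ satisfies it and $\nabla_v$ is $\C$-linear), it suffices to prove that $[v, D\circ \nabla_w] \in S^k(\nabla)$ for each generator $D \circ \nabla_w$.

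The key computation is then the following identity in $\DX{k}(\mcE,\mcE')$:
\begin{equation*}
    [v, D\circ \nabla_w] = \nabla'_v \circ D \circ \nabla_w - D \circ \nabla_w \circ \nabla_v.
\end{equation*}
I would rewrite the second term using the flatness of $\nabla$, namely $\nabla_w \circ \nabla_v = \nabla_v \circ \nabla_w - \nabla_{[w,v]}$ (Equation \ref{Eq: flatness}), obtaining
\begin{equation*}
    [v, D\circ \nabla_w] = \bigl(\nabla'_v \circ D - D \circ \nabla_v\bigr)\circ \nabla_w + D \circ \nabla_{[v,w]} = [v,D]\circ \nabla_w + D\circ \nabla_{[v,w]}.
\end{equation*}
Now I am done: by Claim \ref{Claim: constructions of the connections 1}, $[v,D] \in \DX{k-1}(\mcE,\mcE')$, so $[v,D]\circ\nabla_w \in S^k(\nabla)$; and since $[v,w]\in T_{\F}$ by the involutivity of $\F$, and $D \in \DX{k-1}(\mcE,\mcE')$, the term $D\circ \nabla_{[v,w]}$ also lies in $S^k(\nabla)$. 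Hence $[v, D\circ\nabla_w] \in S^k(\nabla)$, and by the $\germe_X$-module remark above, $[v, E] \in S^k(\nabla)$ for every $E \in S^k(\nabla)$ and $v \in T_{\F}$.

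The steps I would carry out, in order, are: (1) note that $[v,-]$ preserves $\DX{k}(\mcE,\mcE')$ by Claim \ref{Claim: constructions of the connections 1} and satisfies the Leibniz rule with respect to the canonical structure, so it suffices to check the generators; (2) expand $[v, D\circ\nabla_w]$ and apply flatness of $\nabla$ to reorganize; (3) invoke involutivity of $T_{\F}$ to conclude both resulting terms lie in $S^k(\nabla)$. I do not anticipate a serious obstacle here — the only mild subtlety is making sure the Leibniz-type reduction to generators is legitimate, i.e. that the bracket of $v$ with an $\germe_X$-linear combination $\sum f_i \cdot (D_i \circ \nabla_{w_i})$ splits as $\sum v(f_i)\cdot(D_i\circ\nabla_{w_i}) + \sum f_i\cdot[v, D_i\circ\nabla_{w_i}]$, with the first sum in $S^k(\nabla)$ because it is a canonical $\germe_X$-combination of generators, and the second in $S^k(\nabla)$ by the generator case. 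Flatness of $\nabla$ and involutivity of $\F$ are the two structural inputs that make everything close up.
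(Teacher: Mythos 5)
Your proof is correct and follows essentially the same route as the paper: reduce to generators $D\circ\nabla_w$, expand the bracket, and use flatness of $\nabla$ together with involutivity of $T_{\F}$ to obtain $[v,D]\circ\nabla_w + D\circ\nabla_{[v,w]}\in S^k(\nabla)$. You are in fact slightly more careful than the paper in justifying the reduction to generators via the Leibniz identity $[v,f\cdot E]=v(f)\cdot E+f\cdot[v,E]$ (and the sign in your intermediate rewriting of $\nabla_w\circ\nabla_v$ is off, but this is immaterial since $S^k(\nabla)$ is closed under sign and your final displayed identity is the correct one).
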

\begin{proof}
    It is enough to verify the claim for elements of the form $D\circ \nabla_v$, where $D\in \DX{k}(\mcE,\mcE')$ and $v\in T_{\F}$. For every $w\in T_{\F}$,
    \begin{align*}
        [w,D \circ \nabla_v] & = \nabla'_w \circ D \circ \nabla_v - D \circ \nabla_v \circ \nabla_w \\
        & = \nabla'_w \circ D \circ \nabla_v - D \circ \nabla_w \circ \nabla_v + D \circ \nabla_w \circ \nabla_v -D \circ \nabla_v \circ \nabla_w \\
        & = [w,D] \circ \nabla_v + D \circ \nabla_{[w,v]},
    \end{align*}
    where the last equality follows from the flatness of $\nabla$. Since $[w,D]$ has order $\le k-2$, it follows that $[w,D \circ \nabla_v] \in S^k(\nabla)$. This concludes the proof.
\end{proof}

Observe that
\begin{equation*}\label{E: constructions of the connections 1}
    [fv,D] = f \cdot (\nabla_v' \circ D - D\circ \nabla_v) - [D,f] \circ \nabla_v, 
\end{equation*}
for all $f\in \germe_X$, $v\in T_{\F}$ and $D\in \DX{k}(\mcE,\mcE')$. Since $[D,f] \circ \nabla_v \in S^k(\nabla)$, it follows that
\[
    (\nabla,\nabla')^k_{fv}(D) = f \cdot (\nabla,\nabla')^k_v(D).
\]
Hence, $(\nabla,\nabla')_v^k$ is $\germe_X$-linear with respect to $v\in T_{\F}$, and therefore
\[
(\nabla,\nabla')^k: \DXF{k}(\nabla,\mcE') \rightarrow \CTF^1(\DXF{k}(\nabla,\mcE')
\]
is a well-defined $\C$-linear morphism. It remains to verify that $(\nabla,\nabla')^k$ satisfies the Leibniz rule. A straightforward calculation shows that
\begin{equation*}
    [v, f\cdot D] = v(f) \cdot D + f \cdot [v,D],
\end{equation*}
and thus
\[
    (\nabla,\nabla')^k_{v}(f\cdot D) = v(f) \cdot D + f \cdot (\nabla,\nabla')^k_{v}(D),
\]
which corresponds to the Leibniz rule. Therefore, $(\nabla,\nabla')^k$ is a partial connection. 

Finally, let us verify that, whenever $\nabla'$ is flat, the partial connection $(\nabla,\nabla')^k$ is also flat for every $k\ge 0$. For every $v,w\in T_{\F}$ and $D\in \DX{k}(\mcE,\mcE')$, we calculate that
\begin{equation*}
    \begin{split}
        [[v,w],D] & = \nabla_{[v,w]}' \circ D - D \circ \nabla_{[v,w]} = [\nabla'_v, \nabla'_w] \circ D - D \circ [\nabla_v,\nabla_w] \\
        & = (\nabla'_v \circ \nabla'_w - \nabla'_w \circ \nabla'_v) \circ D - D \circ (\nabla_v \circ \nabla_w - \nabla_w \circ \nabla_v) \\
        & = [v,[w,D]] -[w,[v,D]].
    \end{split}
\end{equation*}
Hence, $(\nabla,\nabla')^k_{[v,w]}(D) = [(\nabla,\nabla')^k_v, (\nabla,\nabla')^k_w](D)$, and therefore $(\nabla,\nabla')^k$ is flat. 

This concludes the proof of Theorem \ref{T: connection on the transverse differential operators}. \qed

\section{Jets of flat partial connections}\label{Section: jets of flat partial connections}

\subsection{Jets} The following definitions can be found in \cite[Chapter 2]{berthelot-78-zbMATH03595321} and \cite[Section 16.7]{ega-VI-zbMATH03245973}. Although the theory of jets is established in a much more general setting, throughout this text we consider only jets of sheaves over a smooth complex variety $X$. The \emph{ring of $k$-jets} over $X$ is defined as the sheaf of rings
\[
\PX^k := \frac{\germe_X\otimes_{\C} \germe_X}{\mc I^{k+1}},
\]
where $\mc I$ denotes the kernel of the morphism of sheaves of rings $\germe_X\otimes_{\C} \germe_X \rightarrow \germe_X$ given by $f\otimes g \mapsto f \cdot g$. We abuse notation and denote by $f\otimes g \in \PX^k$ the image of $f\otimes g \in \germe_X \otimes_{\C} \germe_X$ under the natural projection $\germe_X \otimes_{\C} \germe_X \rightarrow \PX^k$.

The ring of $k$-jets is naturally endowed with two $\germe_X$-algebra structures. The first one, referred to as the \emph{canonical} structure, is induced by the ring morphism $\germe_X \rightarrow \PX^k$ given by $f\mapsto f\otimes 1$. If no confusion arises, we denote $f\otimes 1 \in \PX^k$ simply by $f$. The second $\germe_X$-algebra structure for $\PX^k$ is induced by the ring morphism $d^k_X: \germe_X \rightarrow \PX^k$ given by $f\mapsto 1\otimes f$, and is referred to as the \emph{right} structure. 

Additionally, let us keep the following convention: whenever we write $\otimes \PX^k$ (respectively, $\PX^k \otimes$ ), the tensor product is taken with respect to the canonical (respectively, right) $\germe_X$-algebra structure of $\PX^k$.

Let $\mcE$ be a $\germe_X$-module. For every $k\ge 0$, we define the \emph{sheaf of $k$-jets of sections of $\mcE$} as 
\[
\PX^k(\mcE) := \PX^k \otimes \mcE.
\]
Observe that, by definition, $\PX^k(\mcE)$ can be seen simultaneously as a:
\begin{enumerate}[label = -]
    \item $\PX^k$-module, regarding $\PX^k$ as a sheaf of rings; 
    \item $\germe_X$-module considering the canonical $\germe_X$-module structure of $\PX^k$; and
    \item $\germe_X$-module considering the right $\germe_X$-module structure of $\PX^k$.
\end{enumerate}
We refer to these $\germe_X$-module structures of $\PX^k(\mcE)$ as the \emph{canonical} and \emph{right}, respectively. For every $f\in \germe_X$, $\eta\in \PX^k$, and $s\in \mcE$, the \emph{canonical multiplication} is given by
    \[
    f \cdot (\eta \otimes s) = (f\cdot \eta) \otimes s.
    \]
while the \emph{right multiplication} is given by
    \[
    (\eta \otimes s) \cdot f = \eta \otimes (f \cdot s) = (d_X^k(f) \cdot \eta) \otimes s.
    \]
If $\mcE$ is coherent, then $\PX^k(\mcE)$ is coherent with respect to both $\germe_X$-module structures (see \cite[Proposition 16.7.3]{ega-VI-zbMATH03245973}).

Considering the right $\germe_X$-module structure of $\PX^k$, we have naturally define the $\germe_X$-linear morphism
\begin{equation*}
    \begin{split}
        d^k_{X,\mcE}: \mcE & \rightarrow \PX^k(\mcE) \\
        s & \mapsto 1\otimes s.
    \end{split}
\end{equation*}
When $X$ and $\mcE$ are implicit in the context, we denote $d^k_{X,\mcE}$ simply by $d^k$. For every section $s\in \mcE$, we refer to $d^k(s)$ as the \emph{$k$-jet of $s$}.

\subsection{The natural short exact sequence associated to the sheaf of jets}

Directly from the definition of the ring of jets, we deduce for every $k\ge 0$ the natural short exact sequence
\[
    0 \rightarrow \frac{I^{k+1}}{I^{k+2}} \rightarrow \PX^{k+1} \rightarrow \PX^k \rightarrow 0.
\]
Assume that $X$ is smooth. In this case, $\PX^k$ is locally free for every $k\ge 0$ with respect to both canonical and right $\germe_X$-module structures (see \cite[Proposition 2.2]{berthelot-78-zbMATH03595321}), and there exists a natural isomorphism
\begin{equation*}
    \begin{split}
        \Sym^{k+1}(\Omega_X^1) & \rightarrow \frac{I^{k+1}}{I^{k+2}} \\
        df_1 \cdots df_{k+1}  & \mapsto (1\otimes f_1 - f_1\otimes1)\cdots (1\otimes f_{k+1} - f_{k+1}\otimes 1)
    \end{split}
\end{equation*}
Hence, for every coherent sheaf $\mcE$, this induces the short exact sequence
\begin{equation}\label{Eq: short exact sequence of the sheaf of jets}
    0 \rightarrow \Sym^{k+1}(\Omega_X)(\mcE) \xrightarrow{\iota} \PX^{k+1}(\mcE) \xrightarrow{\pi} \PX^k(\mcE) \rightarrow 0,
\end{equation}
where $\iota$ is explicitly given by
\[
\iota(df_1\cdots df_{k+1}\otimes e) = (d^{k+1}_Xf_1-f_1)\cdots (d^{k+1}_Xf_{k+1} - f_{k+1})\otimes e.
\]
Equation (\ref{Eq: short exact sequence of the sheaf of jets}) is called the \emph{short exact sequence associated to $\PX^{k+1}(\mcE)$}. For every $\Omega \in \Sym^k(\Omega_X^1), s\in \mcE$, we abuse notation and denote $\iota(\Omega \otimes s)$ simply by $\Omega\otimes s$. This abuse is consistent with the notations introduced thus far.

\subsection{Jets and differential operators}\label{Subsection: jets and differential operators} 
The relation between jets and differential operators is established in the following proposition, which we state without proof (see \cite[Proposition 16.8.8]{ega-VI-zbMATH03245973}).
\begin{prop}
    Let $X$ be a variety, let $\mcE,\mcE'$ be $\germe_X$-modules, and let $k\ge 0$. Then, for every $\C$-linear morphism $D: \mcE \rightarrow \mcE'$, the following conditions are equivalent:
    \begin{enumerate}[label = (\alph*)]
        \item \label{Item: definition of differential operators of EGA} there exists an $\germe_X$-linear morphism $\phi_D: \PX^k(\mcE) \rightarrow \mcE'$ such that $D = \phi_D \circ d^k_{X,\mcE}$; and
        \item $D$ is a differential operator of order $\le k$ (as we defined in Section \ref{Subsection: differential operators}).
    \end{enumerate}
\end{prop}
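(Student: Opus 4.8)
To prove the proposition I would treat the two implications separately; both are essentially formal, and neither needs $X$ smooth nor $\mcE,\mcE'$ finitely generated. For the implication ``$\phi_D$ exists $\Rightarrow$ $D$ has order $\le k$'', observe first that it suffices to show the universal operator $d^k:=d^k_{X,\mcE}\colon\mcE\to\PX^k(\mcE)$ is itself a differential operator of order $\le k$: since $\phi_D$ has order $0$, the composition property (order $\le 0$ after order $\le k$ is order $\le k$) then gives that $D=\phi_D\circ d^k$ has order $\le k$. I would prove that $d^k$ has order $\le k$ by induction on $k$, the case $k=0$ being immediate because $\PX^0(\mcE)=\mcE$ and $d^0=\id$. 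For the inductive step, fix $f\in\germe_X$; a direct manipulation of the canonical and right $\germe_X$-module structures on $\PX^k(\mcE)$ yields $[d^k,f](s)=(1\otimes f-f\otimes 1)\otimes s$, so that $[d^k,f]$ is the composite of $d^k$ with multiplication on $\PX^k(\mcE)$ by the element $1\otimes f-f\otimes 1\in\mc I/\mc I^{k+1}\subset\PX^k$. Because that element lies in $\mc I$, the multiplication kills $\mc I^k\cdot\PX^k(\mcE)$, which by right-exactness of $-\otimesright\mcE$ applied to $0\to\mc I^k/\mc I^{k+1}\to\PX^k\to\PX^{k-1}\to 0$ is precisely $\ker\bigl(\PX^k(\mcE)\to\PX^{k-1}(\mcE)\bigr)$. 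Hence the multiplication factors as $\nu_f\circ\pi$ with $\nu_f\colon\PX^{k-1}(\mcE)\to\PX^k(\mcE)$ linear for the canonical structure, and therefore $[d^k,f]=\nu_f\circ\pi\circ d^k=\nu_f\circ d^{k-1}$, which has order $\le k-1$ by the inductive hypothesis and the composition property. Thus $d^k$ has order $\le k$, completing this implication.

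For the converse I would realize $\PX^k(\mcE)$ explicitly as a quotient of $\germe_X\otimes_{\C}\mcE$. Using the canonical identification $(\germe_X\otimes_{\C}\germe_X)\otimesright\mcE\cong\germe_X\otimes_{\C}\mcE$, the projection onto $\PX^k(\mcE)$ becomes a surjection $\psi\colon\germe_X\otimes_{\C}\mcE\to\PX^k(\mcE)$, $f\otimes s\mapsto f\cdot d^k(s)$, whose kernel — after expanding the ideal generators $\prod_{j=1}^{k+1}(1\otimes a_j-a_j\otimes 1)$ of $\mc I^{k+1}$ — is the $\germe_X$-submodule (for the canonical structure) generated by the elements
\[
\sum_{S\subseteq\{1,\dots,k+1\}}(-1)^{\,k+1-|S|}\Bigl(\prod_{j\notin S}a_j\Bigr)\otimes\Bigl(\prod_{j\in S}a_j\Bigr)s,\qquad a_1,\dots,a_{k+1}\in\germe_X,\ s\in\mcE.
\]
Now, given $D$ of order $\le k$, consider the canonical-linear map $\Phi\colon\germe_X\otimes_{\C}\mcE\to\mcE'$, $f\otimes s\mapsto f\cdot D(s)$. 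The crucial point is that $\Phi$ sends the displayed generator to $\bigl[\,\cdots[[D,a_1],a_2],\dots,a_{k+1}\bigr](s)$, the value of the $(k{+}1)$-fold iterated commutator of $D$; this vanishes because an operator of order $\le k$ becomes $\germe_X$-linear after $k$ commutators and hence $0$ after one more. Therefore $\Phi$ descends through $\psi$ to a canonical-linear $\phi_D\colon\PX^k(\mcE)\to\mcE'$ with $\phi_D\circ d^k=D$, as wanted; the same presentation (showing $\PX^k(\mcE)$ is generated over $\germe_X$ by $d^k(\mcE)$) also gives the uniqueness of $\phi_D$.

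The step I expect to demand the most care is, in the second implication, pinning down $\ker\psi$ precisely and then verifying the inclusion–exclusion identity that matches $\Phi$ on those generators with the iterated commutator $\bigl[\,\cdots[D,a_1],\dots,a_{k+1}\bigr]$ of $D$; the rest is a formal induction plus routine bookkeeping with the two $\germe_X$-module structures on $\PX^k$ and $\PX^k(\mcE)$. Altogether this reproduces \cite[Proposition 16.8.8]{ega-VI-zbMATH03245973}.
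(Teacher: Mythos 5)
Your argument is correct and complete: the reduction of the first implication to showing that $d^k_{X,\mcE}$ itself has order $\le k$ (via the factorization $[d^k,f]=\nu_f\circ d^{k-1}$ through $\PX^{k-1}(\mcE)$), and the converse via the presentation of $\PX^k(\mcE)$ as a quotient of $\germe_X\otimes_{\C}\mcE$ together with the identification of the expanded generators of $\mc I^{k+1}$ with iterated commutators, is exactly the standard proof of \cite[Proposition 16.8.8]{ega-VI-zbMATH03245973}. The paper deliberately states this proposition without proof and defers to that reference, so there is nothing in the text to compare against; your write-up faithfully supplies the omitted argument.
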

Note that Item \ref{Item: definition of differential operators of EGA} corresponds to the definition of differential operators in \cite[Definition 16.8.1]{ega-VI-zbMATH03245973}. Throughout this work, we keep the notation of this item, denoting by $\phi_D$ the $\germe_X$-linear morphism such that $D = \phi_D \circ d^k_{X,\mcE}$.

As a consequence of the preceding proposition, we have the following natural isomorphism of sheaves of groups (see \cite[Proposition 16.8.4]{ega-VI-zbMATH03245973}):
\begin{equation}\label{Eq: jets and differential operators}
    \begin{split}
        \mcHom_{\germe_X}(\PX^k(\mcE),\mcE') & \rightarrow \DX{k}(\mcE,\mcE') \\
        \phi & \mapsto \phi \circ d^k_{X,\mcE}
    \end{split},
\end{equation}
where $\mcHom_{\germe_X}$ is with respect to the canonical $\germe_X$-module structure of $\PX^k(\mcE)$. Moreover, it follows directly from the definition that this isomorphism is $\germe_X$-linear with respect to both canonical and right structures of the respective sheaves.

\subsection{Jets of torsion-free and reflexive sheaves} 

\begin{prop}\label{P: jet of torsion-free sheaf}
    Let $\mcE$ be a torsion-free sheaf on a smooth complex variety $X$. Then, for all $k\ge 0$, the sheaf $\PX^k(\mcE)$ is also torsion-free .
\end{prop}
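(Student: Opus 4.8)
The plan is to reduce the statement to a local computation using the short exact sequence associated with the sheaf of jets (Equation (\ref{Eq: short exact sequence of the sheaf of jets})). Since $X$ is smooth, $\PX^k$ is locally free with respect to the \emph{right} $\germe_X$-module structure, and the map $d^k_{X,\mcE}\colon \mcE \to \PX^k(\mcE)$ is built using precisely that right structure. So the first step is to observe that, working over an affine open $U$ where $\Omega_X^1|_U$ is free with basis $dx_1, \dots, dx_n$, there is a right-$\germe_U$-module basis of $\PX^k|_U$ given by the monomials $\xi^\alpha := (1\otimes x_1 - x_1\otimes 1)^{\alpha_1}\cdots(1\otimes x_n - x_n\otimes 1)^{\alpha_n}$ for $|\alpha|\le k$. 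Hence, as a \emph{right} $\germe_U$-module, $\PX^k(\mcE)|_U = \PX^k|_U \otimes \mcE|_U$ is a finite direct sum of copies of $\mcE|_U$, indexed by these monomials: every element writes uniquely as $\sum_{|\alpha|\le k} \xi^\alpha \otimes s_\alpha$ with $s_\alpha \in \mcE$.

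The torsion we must rule out, however, is torsion with respect to the \emph{canonical} $\germe_X$-module structure, so the second step is to understand how canonical multiplication by $f \in \germe_X$ acts in the above decomposition. The key identity is $f \otimes 1 = (1\otimes f) - \big((1\otimes f) - (f\otimes 1)\big)$; iterating and using that $\xi_i := 1\otimes x_i - x_i \otimes 1$ lies in $\mc I$ (so products of $k+1$ of them vanish in $\PX^k$), one gets a triangular formula: canonical multiplication by $f$ sends $\xi^\alpha \otimes s$ to $\xi^\alpha \otimes (fs)$ plus a sum of terms $\xi^\beta \otimes (\,\cdot\,)$ with $|\beta| < |\alpha|$. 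In other words, in the decreasing filtration by $|\alpha|$, canonical multiplication by $f$ acts on the associated graded exactly as right multiplication by $f$, i.e.\ as multiplication by $f$ on each copy of $\mcE$. I would make this precise by induction on $k$ via the short exact sequence $0 \to \Sym^{k+1}(\Omega_X^1)(\mcE) \to \PX^{k+1}(\mcE) \to \PX^k(\mcE) \to 0$, noting that both $\iota$ and $\pi$ are morphisms for the canonical structure, that $\Sym^{k+1}(\Omega_X^1)(\mcE) \cong \mcE^{\oplus N}$ as a canonical $\germe_X$-module (since $\Sym^{k+1}(\Omega_X^1)$ is locally free and the tensor product defining it is over the canonical structure), and that torsion-freeness is preserved under extensions: if the sub and the quotient of a short exact sequence are torsion-free, so is the middle term.

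With those two ingredients the proof closes quickly. Suppose $f \in \germe_X$ is a nonzerodivisor on $\germe_X$ at some point (for torsion-freeness it suffices to test nonzerodivisors $f$, e.g.\ local uniformizers, since $X$ is smooth hence integral on each component) and $f \cdot t = 0$ for some local section $t$ of $\PX^k(\mcE)$; write $t = \sum_{|\alpha|\le k}\xi^\alpha \otimes s_\alpha$ in the right-module basis. Looking at the top-degree part (the image in $\Sym^{k}(\Omega_X^1)(\mcE) \cong \mcE^{\oplus N}$ under the quotient by $\PX^{k-1}(\mcE)$), canonical multiplication by $f$ acts there as multiplication by $f$ on each copy of $\mcE$; since $\mcE$ is torsion-free, $f s_\alpha = 0$ forces $s_\alpha = 0$ for all $|\alpha| = k$. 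Thus $t$ lies in $\iota_{k-1}(\PX^{k-1}(\mcE))$ — or, more cleanly, one just runs the induction on $k$ directly: the base case $k=0$ is $\PX^0(\mcE) = \mcE$, and the inductive step is the extension argument above applied to the short exact sequence, using that $\Sym^{k+1}(\Omega_X^1)(\mcE)$ is a direct sum of copies of $\mcE$ and hence torsion-free. The main obstacle — and the one place a little care is needed — is verifying that the relevant tensor products in $\PX^{k+1}(\mcE) = \PX^{k+1}\otimes\mcE$ and in $\Sym^{k+1}(\Omega_X^1)(\mcE) = \Sym^{k+1}(\Omega_X^1)\otimes\mcE$ are both taken over the \emph{canonical} structure, so that local freeness of $\PX^{k+1}$ and $\Sym^{k+1}(\Omega_X^1)$ for that structure genuinely gives $\Sym^{k+1}(\Omega_X^1)(\mcE)$ as a finite direct sum of copies of $\mcE$ as canonical $\germe_X$-modules; granting that, torsion-freeness propagates up the filtration with no further work.
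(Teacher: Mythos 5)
Your proof is correct and is essentially the paper's own argument: induction on $k$ using the short exact sequence $0 \to \Sym^{k}(\Omega_X^1)(\mcE) \to \PX^{k}(\mcE) \to \PX^{k-1}(\mcE) \to 0$, the fact that $\Sym^{k}(\Omega_X^1)(\mcE)$ is torsion-free (locally free tensor torsion-free), and that an extension of a torsion-free sheaf by a torsion-free sheaf is torsion-free. The additional material on the right-module basis $\xi^\alpha$ and the triangular action of canonical multiplication is not needed once you run that induction, as you yourself note.
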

\begin{proof}
    We proceed by induction on $k$. When $k=0$, $\PX^0(\mcE) \simeq \mcE$, and thus it is torsion-free by hypothesis. Suppose the assertion is true for $k-1$, and let us conclude that it holds for $k$. From Equation (\ref{Eq: short exact sequence of the sheaf of jets}), we have the natural short exact sequence
    \[
    0 \rightarrow \Sym^k(\Omega_X^1)(\mcE) \rightarrow \PX^k(\mcE) \rightarrow \PX^{k-1}(\mcE) \rightarrow 0.
    \]
    Since both $\Sym^k(\Omega_X^1)(\mcE)$ and $\PX^{k-1}(\mcE)$ are torsion-free, it follows that $\PX^k(\mcE)$ is also torsion-free. 
    
    Therefore, by induction, the proposition holds for all $k\ge 0$.
\end{proof}

\begin{prop}\label{P: jet of reflexive sheaf}
    Let $\mcE$ be a reflexive sheaf over a smooth complex variety $X$. Then,  for all $k\ge 0$, the sheaf $\PX^k(\mcE)$ is also reflexive.
\end{prop}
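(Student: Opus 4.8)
The plan is to reduce the reflexivity of $\PX^k(\mcE)$ to the reflexivity of $\mcE$ and of the locally free sheaves $\Sym^j(\Omega_X^1)$, using the short exact sequence (\ref{Eq: short exact sequence of the sheaf of jets}) together with standard facts about reflexive sheaves on smooth varieties. Recall that, on a smooth (hence regular, normal) variety, a coherent sheaf $\mcF$ is reflexive if and only if it is torsion-free and satisfies Serre's condition $S_2$; equivalently, $\mcF$ is reflexive iff it is torsion-free and, for every open $U$ and every closed $Z\subset U$ of codimension $\ge 2$, the restriction $\mcF(U)\to\mcF(U\setminus Z)$ is an isomorphism. Either characterization will let me propagate reflexivity along extensions whose outer terms are reflexive, provided I already know the middle term is torsion-free — and that last point is exactly the content of Proposition \ref{P: jet of torsion-free sheaf}, which I may invoke.

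Concretely, I would argue by induction on $k$, just as in the proof of Proposition \ref{P: jet of torsion-free sheaf}. For $k=0$ we have $\PX^0(\mcE)\simeq\mcE$, which is reflexive by hypothesis. For the inductive step, consider the short exact sequence
\[
0 \rightarrow \Sym^k(\Omega_X^1)(\mcE) \xrightarrow{\iota} \PX^k(\mcE) \xrightarrow{\pi} \PX^{k-1}(\mcE) \rightarrow 0.
\]
Here $\Sym^k(\Omega_X^1)$ is locally free (as $X$ is smooth), so $\Sym^k(\Omega_X^1)(\mcE)=\Sym^k(\Omega_X^1)\otimes\mcE$ is reflexive because tensoring a reflexive sheaf by a locally free one preserves reflexivity; and $\PX^{k-1}(\mcE)$ is reflexive by the induction hypothesis. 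By Proposition \ref{P: jet of torsion-free sheaf}, $\PX^k(\mcE)$ is at least torsion-free. Now I want to conclude that the middle term of an exact sequence with reflexive kernel and reflexive cokernel, itself torsion-free, is reflexive. I would do this via the $S_2$/codimension-$2$ criterion: let $Z\subset U$ be closed of codimension $\ge 2$; applying the left-exact functor of taking sections over $U$ and over $U\setminus Z$ and comparing the two exact sequences through the snake/five-lemma, the isomorphisms $\Sym^k(\Omega_X^1)(\mcE)(U)\cong\Sym^k(\Omega_X^1)(\mcE)(U\setminus Z)$ and $\PX^{k-1}(\mcE)(U)\cong\PX^{k-1}(\mcE)(U\setminus Z)$ force $\PX^k(\mcE)(U)\to\PX^k(\mcE)(U\setminus Z)$ to be an isomorphism as well (injectivity is automatic from torsion-freeness; surjectivity follows from a diagram chase once one notes $R^1$ of restriction on the reflexive kernel vanishes in the relevant range, or simply that a section over $U\setminus Z$ maps to a section of $\PX^{k-1}(\mcE)$ that extends, and the difference lifts to the kernel which also extends). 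Hence $\PX^k(\mcE)$ satisfies $S_2$, and being torsion-free on a smooth variety it is reflexive. By induction the proposition holds for all $k\ge 0$.

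The main obstacle is the purely homological step: making precise that "torsion-free middle term of a short exact sequence with reflexive outer terms is reflexive." It is slightly subtler than the torsion-free case because reflexivity is not closed under extensions in general (the cokernel being reflexive is what rescues us, not merely torsion-free), and one must be a little careful with the depth/$S_2$ bookkeeping — alternatively one can phrase it as: $\mcHom$ into $\germe_X$ is left exact and the biduality map fits into a morphism of exact sequences whose outer vertical maps are isomorphisms, so the middle one is too by the five lemma, using that $\mathcal{E}xt^1(\PX^{k-1}(\mcE),\germe_X)$ has support in codimension $\ge 2$ (since $\PX^{k-1}(\mcE)$ is reflexive) to control the failure of right-exactness. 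I would ultimately present whichever of these two formulations (codimension-$2$ extension property, or biduality plus five lemma) is cleanest, citing Hartshorne's "Stable reflexive sheaves" for the standard facts about reflexive sheaves that are used.
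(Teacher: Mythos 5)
Your proposal is correct, but it takes a genuinely different route from the paper's. The paper does not induct on $k$ at all: it uses \cite[Proposition 1.1]{hartshorne-80-zbMATH03670590} to place $\mcE$ in a short exact sequence $0 \rightarrow \mcE \rightarrow \mcE' \rightarrow \mcE'' \rightarrow 0$ with $\mcE'$ locally free and $\mcE''$ torsion-free, applies the exact functor $\PX^k \otimes (-)$, and then reads off reflexivity of $\PX^k(\mcE)$ from the same criterion, invoking Proposition \ref{P: jet of torsion-free sheaf} only to see that $\PX^k(\mcE'')$ is torsion-free; this sidesteps entirely the homological step you identify as the main obstacle. Your route, induction along the symbol sequence (\ref{Eq: short exact sequence of the sheaf of jets}), instead requires the lemma that on a smooth variety a short exact sequence with reflexive outer terms has reflexive middle term. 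That lemma is true, and the cleanest way to nail it down is the depth form of your $S_2$ argument: by the depth lemma for short exact sequences, $\mathrm{depth}\,\PX^k(\mcE)_x \ge \min\bigl(\mathrm{depth}\,(\Sym^k(\Omega_X^1)(\mcE))_x,\ \mathrm{depth}\,\PX^{k-1}(\mcE)_x\bigr) \ge 2$ at every point $x$ of codimension $\ge 2$, which together with torsion-freeness (Proposition \ref{P: jet of torsion-free sheaf}) gives reflexivity on a regular scheme. If you prefer the codimension-two extension chase, the surjectivity step should be organized as: extend the image section of $\PX^{k-1}(\mcE)$ from $U\setminus Z$ to $U$, lift it locally to $\PX^k(\mcE)$, note that the difference with the given section lies in the reflexive kernel over $U\setminus Z$ and hence extends, and glue the local extensions using the uniqueness you already get from torsion-freeness. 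What your approach buys is a self-contained induction together with the generally useful fact that reflexivity is stable under such extensions; what the paper's buys is brevity, since Hartshorne's criterion is tailor-made for reducing to the locally free case in one step.
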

\begin{proof}
    Since $\mcE$ is reflexive, by \cite[Proposition 1.1]{hartshorne-80-zbMATH03670590} it follows that $\mcE$ can be included in a short exact sequence
    \[
    0 \rightarrow \mcE \rightarrow \mcE' \rightarrow \mcE''\rightarrow 0,
    \]
    where $\mcE'$ is locally free and $\mcE''$ is torsion-free. Because $X$ is smooth, it follows that the functor $\PX^k \otimes $ is exact for all $k\ge 0$ (see \cite[Proposition 16.7.3]{ega-VI-zbMATH03245973}). Hence, applying it to the short exact sequence above, we obtain the short exact sequence
    \[
    0 \rightarrow \PX^k(\mcE) \rightarrow \PX^k(\mcE') \rightarrow \PX^k(\mcE'')\rightarrow 0,
    \]
    with $\PX^k(\mcE')$ locally free and $\PX^k(\mcE'')$ is torsion-free. Therefore, again by \cite[Proposition 1.1]{hartshorne-80-zbMATH03670590}, $\PX^k(\mcE)$ is reflexive. 
\end{proof}

Let $\mcE$ be a coherent sheaf on a complex smooth variety $X$. Dualizing the isomorphism from Equation (\ref{Eq: jets and differential operators}) with respect to the respective canonical structures, we obtain the natural isomorphism
\begin{equation*}
    \begin{split}
        \PX^k(\mcE)^{**} & \rightarrow (\DX{k}(\mcE,\germe_X))^* \\
        \psi & \mapsto \Big(D\in \DX{k}(\mcE,\germe_X) \mapsto \psi(\phi_D)\Big).
    \end{split}
\end{equation*}
Composing it with the canonical morphism $\mathrm{ev}: \PX^k(\mcE) \rightarrow (\PX^k(\mcE))^{**}$, we deduce the natural morphism
\begin{equation}\label{Eq: jet is the dual of differential operators}
    \begin{split}
        \PX^k(\mcE) & \rightarrow (\DX{k}(\mcE,\germe_X))^* \\
        \eta & \mapsto \Big( D\in \DX{k}(\mcE,\germe_X) \mapsto \phi_D(\eta) \Big).
    \end{split}
\end{equation}
The preceding propositions have consequences on this morphism. If $\mcE$ is torsion-free, Proposition \ref{P: jet of torsion-free sheaf} implies that $\PX^k(\mcE)$ is torsion-free, which is equivalent to $\mathrm{ev}: \PX^k(\mcE) \rightarrow (\PX^k(\mcE))^{**}$ being injective. Hence, in this case, the morphism of Equation (\ref{Eq: jet is the dual of differential operators}) is injective. Similarly, if $\mcE$ is reflexive, Proposition \ref{P: jet of reflexive sheaf} implies that the morphism of Equation (\ref{Eq: jet is the dual of differential operators}) is an isomorphism. We will make use of this isomorphism later in this work.

\subsection{The sheaf of first jets and meromorphic connections} 
Let $\mcE$ be a torsion-free sheaf over a smooth complex variety $X$. As a particular case of the short exact sequence of Equation (\ref{Eq: short exact sequence of the sheaf of jets}), we can consider the \emph{short exact sequence associated to the sheaf of first jets of $\mcE$}:
\begin{equation}\label{Eq: short exact sequence of the sheaf of first jets}
        0\rightarrow \Omega_X^1(\mcE) \xrightarrow{\iota} \PX^1(\mcE) \xrightarrow{\pi} \mcE \rightarrow 0
\end{equation}
Let $D\ge 0$ be a divisor on $X$. Recall that a meromorphic connection $\nabla: \mcE\rightarrow \OmegaDE$ corresponds to a \emph{left meromorphic splitting with poles on $D$} of the preceding short exact sequence: that is, an $\germe_X$-linear morphism $\phi: \PX^1(\mcE) \rightarrow \OmegaDE$ such that the composition 
\[
\phi \circ \iota: \Omega_X^1(\mcE) \rightarrow \OmegaDE
\]
is the natural inclusion. Indeed, since $\nabla$ is a differential operator of order $\le 1$, there exists a unique $\germe_X$-linear morphism $\phi_{\nabla}: \PX^1(\mcE) \rightarrow \OmegaDE$ such that  $\nabla = \phi_{\nabla} \circ d^1$. A straightforward computation shows that the Leibniz rule of $\nabla$ is equivalent to the condition that $\phi_{\nabla} \circ \iota$ is the natural inclusion.

Instead of considering a left meromorphic splitting of Equation (\ref{Eq: short exact sequence of the sheaf of first jets}), we can equivalently describe a meromorphic connection as a \emph{right meromorphic splitting} of Equation (\ref{Eq: short exact sequence of the sheaf of first jets}), as follows. Consider the natural inclusions $\PX^1(\mcE) \subset \germe_X(D) \otimes \PX^1(\mcE)$ and $\OmegaDE \subset \germe_X(D) \otimes \PX^1(\mcE)$. A meromorphic connection $\nabla: \mcE \rightarrow \OmegaDE$ induces the $\germe_X$-linear morphism
\begin{equation}\label{Eq: right meromorphic splitting 1}
    \sigma_{\nabla} = d^1_{X,\mcE} - \nabla: \mcE \rightarrow \germe_X(D) \otimes \PX^1(\mcE),    
\end{equation}
which in turn naturally induces a $\germe_X$-linear morphism
\begin{equation}\label{Eq: right meromorphic splitting 2}
    \sigma_{\nabla}: \mcE(-D) \rightarrow \PX^1(\mcE).    
\end{equation}
such that $\pi \circ \sigma_{\nabla}: \mcE(-D) \rightarrow \mcE$ is the canonical inclusion. We refer to $\sigma_{\nabla}$ as the \emph{right meromorphic splitting} of Equation (\ref{Eq: short exact sequence of the sheaf of first jets}) associated to $\nabla$. Conversely, starting with a morphism $\sigma: \mcE(-D) \rightarrow \PX^1(\mcE)$ such that the composition $\pi \circ \sigma: \mcE(-D) \rightarrow \mcE$ is the canonical inclusion, one recovers a meromorphic connection $\nabla$ by the same expressions above.

\begin{rmk}
    During this work, we will be consistent with the notation introduced above. That is, we will denote left meromorphic splittings of Equation (\ref{Eq: short exact sequence of the sheaf of first jets}) by $\phi$ or $\phi_{\nabla}$, while right meromorphic splitting will be denoted by $\sigma$ or $\sigma_{\nabla}$. 
\end{rmk}

Finally, it is useful to describe the relation between meromorphic connections and the meromorphic splitting in terms of local coordinates. Recall that every element $\xi \in \PX^1(\mcE)$ can be written as
\[
\xi = 1\otimes s + \sum \omega_i \otimes s_i,
\]
for some $s\in \mcE, s_i \in \mcE$ and  $\omega_i \in \Omega_X^1$. Given $s\in \mcE(-D)$, it follows directly from Equation (\ref{Eq: right meromorphic splitting 1}) that
\begin{equation}\label{Eq: right meromorphic splitting 3}
      \sigma_{\nabla}(s) = 1\otimes s - \sum \omega_i \otimes s_i \iff \nabla(s) = \sum \omega_i \otimes s_i.
\end{equation}

\subsection{Composition of differential operators}

From the definition of the sheaf of jets, one can verify that, for every $k,l\ge 0$, there exists a well-defined $\germe_X$-linear morphism
\begin{equation}\label{Eq: delta}
    \begin{split}
            \delta: \PX^{k+l}(\mcE) & \rightarrow \PX^l \otimes \PX^k(\mcE) \\
            f \otimes s & \mapsto f \otimes (1 \otimes s),
    \end{split}
\end{equation}
see, for instance, \cite[Lemma 2.3]{berthelot-78-zbMATH03595321} or \cite[Lemma 16.8.9.1]{ega-VI-zbMATH03245973}. The morphism $\delta$ is related to the composition of differential operators in the following sense. Let $D\in \DX{k}(\mcE,\mcE')$ and $D' \in \DX{l}(\mcE',\mcE'')$ be differential operators. Then, we have the following commutative diagram (see \cite[Corollary 2.4]{berthelot-78-zbMATH03595321})
:
\begin{equation*}
    \begin{tikzpicture}
        \matrix(m)[matrix of math nodes, column sep = 2em, row sep = 2em]
        {
        \PX^{k+l}(\mcE) & & \PX^l \otimes \PX^k(\mcE) \\
        & \PX^k(\mcE) & & \PX^l(\mcE') \\
        \mcE & & \mcE' & & \mcE'' \\
        };
        \path[->]
        (m-1-1) edge node[above]{$\delta$} (m-1-3) %edge (m-2-2)
        (m-1-3) edge node[right]{$\mathrm{id} \otimes \phi_D$} (m-2-4)
        (m-2-2) edge node[left]{$d^l_{\PX^k(\mcE)}$}(m-1-3) edge node[above]{$\phi_D$} (m-3-3)
        (m-2-4) edge node[above]{$\phi_{D'}$}(m-3-5)
        (m-3-1) edge node[above]{$D$}(m-3-3) edge node[left]{$d_{\mcE}^{k+l}$} (m-1-1) edge node[above]{$d^k_{\mcE}$} (m-2-2)
        (m-3-3) edge node[above]{$D'$} (m-3-5) edge node[above]{$d^l_{\mcE'}$} (m-2-4)
        ;
    \end{tikzpicture}
\end{equation*}
We define the \emph{$l$-th prolongation} of $D$ as the $\germe_X$-linear morphism
\begin{equation}\label{Eq: prolongation 1}
    \begin{split}
            \phi_D^{l}: \PX^{k+l}(\mcE) & \rightarrow \PX^{l}(\mcE') \\
            f\otimes s & \mapsto f \otimes \phi_D(s)
    \end{split}
\end{equation}
given by the composition of $\delta: \PX^{k+l}(\mcE) \rightarrow \PX^{l} \otimes \PX^k(\mcE)$ and $\rm{id}\otimes \phi_D: \PX^{l} \otimes \PX^k(\mcE) \rightarrow \PX^l(\mcE')$. With this notation, we have that
\begin{equation}\label{Eq: prolongation 2}
    \phi_{D'\circ D} = \phi_{D'} \circ \phi_D^{l}.
\end{equation}

\subsection{Jets and partial connections}

\begin{prop}\label{P: partial connection}
    Let $\F$ be a foliation on a smooth variety $X$, and let $\mcE$ be a coherent sheaf on $X$. Let $\nabla: \mcE \rightarrow \CTF^1(\mcE)$ be a differential operator of order $\le 1$. Then, $\nabla$ is a partial connection if and only if the corresponding $\germe_X$-linear morphism $\phi_{\nabla}: \PX^1(\mcE) \rightarrow \CTF^1(\mcE)$ satisfies that the composition 
    \[
        \phi_{\nabla} \circ \iota: \Omega_X^1(\mcE) \rightarrow \CTF^1(\mcE)
    \]
    is the restriction morphism.
\end{prop}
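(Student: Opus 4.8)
The plan is to run the same argument that was used above to identify meromorphic connections with meromorphic splittings of the sequence of first jets, adapted to the foliated setting. Since $\nabla$ is a differential operator of order $\le 1$, the isomorphism of Equation~(\ref{Eq: jets and differential operators}) attaches to it a unique morphism $\phi_{\nabla}\colon\PX^1(\mcE)\to\CTF^1(\mcE)$, $\germe_X$-linear for the canonical structures, such that $\nabla=\phi_{\nabla}\circ d^1_{X,\mcE}$; so the proposition amounts to comparing $\phi_{\nabla}\circ\iota$ with the restriction morphism $\mathrm{restr}\colon\Omega_X^1(\mcE)\to\CTF^1(\mcE)$.

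The first step is the key identity: for all $f\in\germe_X$ and $s\in\mcE$,
\[
d^1_{X,\mcE}(fs)-f\cdot d^1_{X,\mcE}(s)=\iota(df\otimes s)\qquad\text{in}\quad\PX^1(\mcE),
\]
where $f\cdot$ is the canonical $\germe_X$-module multiplication. This is a direct unwinding of the definitions: since $d^1_{X,\mcE}$ is $\germe_X$-linear for the \emph{right} structure one has $d^1_{X,\mcE}(fs)=(1\otimes f)\otimes s$, while $f\cdot d^1_{X,\mcE}(s)=(f\otimes1)\otimes s$ by definition of the canonical multiplication on $\PX^1(\mcE)$, and the difference $(d^1_Xf-f)\otimes s$ is precisely $\iota(df\otimes s)$.

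Applying the canonical-$\germe_X$-linear map $\phi_{\nabla}$ and using $\nabla=\phi_{\nabla}\circ d^1_{X,\mcE}$ gives $\nabla(fs)-f\cdot\nabla(s)=(\phi_{\nabla}\circ\iota)(df\otimes s)$ for all $f,s$. On the other hand, because $X$ is smooth one has $\Omega_X^1(\mcE)\cong\mcHom_{\germe_X}(T_X,\mcE)$, and under this identification the restriction morphism carries $df\otimes s$ to the foliated $1$-form $v\in T_{\F}\mapsto v(f)\cdot s$, which is exactly the first summand on the right-hand side of the Leibniz rule~(\ref{Eq: Leibniz rule}). Hence $\nabla$ satisfies the Leibniz rule if and only if $(\phi_{\nabla}\circ\iota)(df\otimes s)=\mathrm{restr}(df\otimes s)$ for all $f\in\germe_X$ and $s\in\mcE$. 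To pass from this to the equality $\phi_{\nabla}\circ\iota=\mathrm{restr}$ of morphisms $\Omega_X^1(\mcE)\to\CTF^1(\mcE)$, I would note that both are $\germe_X$-linear and that, $X$ being smooth, $\Omega_X^1$ is locally free with a local basis of exact differentials, so $\Omega_X^1(\mcE)=\Omega_X^1\otimes\mcE$ is locally generated over $\germe_X$ by the elements $df\otimes s$; a $\germe_X$-linear morphism is determined by its values on a generating set, which finishes the equivalence.

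I do not expect a genuine obstacle here: the only delicate point is the bookkeeping of the two $\germe_X$-module structures (canonical versus right) on $\PX^1$ and $\PX^1(\mcE)$, since $d^1_{X,\mcE}$ is linear for the right structure while $\phi_{\nabla}$, $\iota$ and the canonical multiplication refer to the canonical one — the key identity above is exactly where these two structures meet, and once it is in place the rest of the proof is formal.
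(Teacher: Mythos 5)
Your proof is correct and follows essentially the same route as the paper: the key identity $\phi_{\nabla}(df\otimes s)=\phi_{\nabla}(1\otimes fs - f\otimes s)=\nabla(fs)-f\cdot\nabla(s)$ is exactly the computation in the paper's proof, and the comparison with the restriction morphism via the Leibniz rule is identical. Your additional remarks on the two module structures and on $\Omega_X^1(\mcE)$ being generated by the elements $df\otimes s$ merely make explicit what the paper leaves implicit.
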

\begin{proof}
    For any first-order differential operator $\nabla: \mcE \rightarrow \CTF^1(\mcE)$, we have that
    \[
    \phi_{\nabla}(df \otimes s) = \phi_{\nabla}(1\otimes fs - f\otimes s) = \nabla(fs) - f \cdot \nabla(s),  \forall f\in \germe_X,  s\in \mcE.
    \]
    Hence, $\phi_{\nabla} \circ \iota$ coincides with the restriction morphism if, and only if, $\nabla$ satisfies the Leibniz rule.
\end{proof}

Let us consider now some remarks on the prolongation of a partial connection. Let $\nabla: \mcE \rightarrow \CTF^1(\mcE)$ be a partial connection, and let $\phi_{\nabla}: \PX^1(\mcE) \rightarrow \CTF^1(\mcE)$ be the associated $\germe_X$-linear morphism such that $\nabla = \phi_{\nabla} \circ d^1_{X,\mcE}$, as given by isomorphism of Equation (\ref{Eq: jets and differential operators}). For every $k\ge 0$, recall that we define the $k$-th prolongation of $\nabla$ as the morphism
\[
\phi_{\nabla}^k: \PX^{k+1}(\mcE) \rightarrow \PX^k(\CTF^1(\mcE)).
\]
In the same way, for every $v\in T_{\F}$, we can also consider the $k$-th prolongation of the differential operator $\nabla_v$ as the morphism
\[
\phi_{\nabla_v}^k: \PX^{k+1}(\mcE) \rightarrow \PX^k(\CTF^1(\mcE)).
\]
We relate the solutions of the respective prolongations by the following proposition.
\begin{prop}\label{P: prolongation of the connection}
    Let $\F$ be a foliation on a smooth complex variety $X$, and let $(\mcE,\nabla)$ be a partial connection on a coherent sheaf. Then, for all $\xi \in \PX^{k+1}(\mcE)$,
    \[
    \phi_{\nabla}^k(\xi)=0 \iff \phi^k_{\nabla_v}(\xi)=0 \text{ for all } v\in T_{\F}. 
    \]
\end{prop}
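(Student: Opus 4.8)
The plan is to reduce the statement to a computation involving the symbol of the partial connection and the composition structure of jets. Recall that $\phi_\nabla : \PX^1(\mcE) \to \CTF^1(\mcE)$ and, for each $v \in T_\F$, the operator $\nabla_v : \mcE \to \mcE$ has associated map $\phi_{\nabla_v} : \PX^1(\mcE) \to \mcE$. The key point is that $\phi_{\nabla_v}$ is obtained from $\phi_\nabla$ by composing with the evaluation-at-$v$ map $\mathrm{ev}_v : \CTF^1(\mcE) = \mcHom_{\germe_X}(T_\F,\mcE) \to \mcE$; that is, $\phi_{\nabla_v} = \mathrm{ev}_v \circ \phi_\nabla$. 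This holds because $\nabla_v = \mathrm{ev}_v \circ \nabla = \mathrm{ev}_v \circ \phi_\nabla \circ d^1_{X,\mcE}$, and $\mathrm{ev}_v$ is $\germe_X$-linear (i.e.\ a differential operator of order $0$), so the factorization through $d^1$ is unique and gives exactly $\phi_{\nabla_v} = \mathrm{ev}_v \circ \phi_\nabla$.

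Next I would prolong this identity. Applying the prolongation functor $(-)^k$ (which, by Equation (\ref{Eq: prolongation 1}), sends $f \otimes s \mapsto f \otimes \phi(s)$, i.e.\ is $\mathrm{id}_{\PX^k} \otimes (-)$ on the ``coefficient'' factor after applying $\delta$) to the composition $\phi_{\nabla_v} = \mathrm{ev}_v \circ \phi_\nabla$ and using the compatibility of $\delta$ with composition — concretely Equation (\ref{Eq: prolongation 2}), or rather its order-zero instance $\phi_{D' \circ D}^k = \phi_{D'}^k \circ \phi_D^k$ together with the fact that for an order-zero operator $D'$ the prolongation $\phi_{D'}^k$ is just $\PX^k(D') = \mathrm{id}_{\PX^k} \otimes D'$ — one obtains
\[
\phi_{\nabla_v}^k = \PX^k(\mathrm{ev}_v) \circ \phi_\nabla^k : \PX^{k+1}(\mcE) \to \PX^k(\mcE),
\]
where $\PX^k(\mathrm{ev}_v) : \PX^k(\CTF^1(\mcE)) \to \PX^k(\mcE)$ is $\mathrm{id}_{\PX^k} \otimes \mathrm{ev}_v$. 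With this in hand, the equivalence is immediate in one direction: if $\phi_\nabla^k(\xi) = 0$ then $\phi_{\nabla_v}^k(\xi) = \PX^k(\mathrm{ev}_v)(0) = 0$ for every $v$.

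For the converse I would argue that the family of maps $\{\PX^k(\mathrm{ev}_v)\}_{v \in T_\F}$ is jointly injective on $\PX^k(\CTF^1(\mcE))$, so that $\phi_{\nabla_v}^k(\xi) = 0$ for all $v$ forces $\phi_\nabla^k(\xi) = 0$. Since $\PX^k(\CTF^1(\mcE)) = \PX^k \otimes_{\germe_X} \mcHom_{\germe_X}(T_\F,\mcE)$ and $\PX^k$ is locally free (for $X$ smooth), joint injectivity reduces to the statement that the evaluation maps $\mathrm{ev}_v : \mcHom_{\germe_X}(T_\F,\mcE) \to \mcE$ separate sections, i.e.\ $\bigcap_{v} \ker(\mathrm{ev}_v) = 0$ — which is clear since a homomorphism $T_\F \to \mcE$ vanishing on every local section $v$ of $T_\F$ is zero. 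The one subtlety here, and what I expect to be the main technical obstacle, is handling the ``for all $v\in T_\F$'' carefully at the sheaf level: the statement should be read stalkwise/locally, with $v$ ranging over local sections of $T_\F$, and one must check that vanishing of $\phi_{\nabla_v}^k(\xi)$ for all such local $v$ (equivalently, for a local generating set of $T_\F$, using $\germe_X$-linearity of $v \mapsto \phi_{\nabla_v}^k$) indeed implies the identically-zero conclusion after tensoring with the locally free sheaf $\PX^k$. Once the factorization $\phi_{\nabla_v}^k = \PX^k(\mathrm{ev}_v) \circ \phi_\nabla^k$ is established, everything else is routine multilinear algebra over the locally free sheaf $\PX^k$.
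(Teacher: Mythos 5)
Your proposal is correct and takes essentially the same route as the paper: both factor $\phi^k_{\nabla_v}$ as an evaluation-at-$v$ map composed with $\phi^k_{\nabla}$ and then conclude from joint injectivity of the evaluations. The only cosmetic difference is that the paper packages the joint-injectivity step as the isomorphism $\PX^k(\CTF^1(\mcE))\simeq \mcHom_{\germe_X}(T_{\F},\PX^k(\mcE))$ (via \cite[Equation 16.7.8.2]{ega-VI-zbMATH03245973} and adjunction), under which an element vanishing on every $v$ is zero by definition, whereas you verify it by hand using a local generating set of $T_{\F}$ and local freeness of $\PX^k$.
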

\begin{proof}
    From \cite[Equation 16.7.8.2]{ega-VI-zbMATH03245973}, there exists a natural isomorphism
    \begin{equation*}
        \begin{split}
            \PX^k(\CTF^1(\mcE)) & \rightarrow \mcHom_{\PX^k}(\PX^k(T_{\F}), \PX^k(\mcE)) \\
            f \otimes \phi & \mapsto \Big(g\otimes v\in \PX^k(T_{\F}) \mapsto g \cdot f \otimes \phi(v)  \Big),
        \end{split}
    \end{equation*}
    which, by adjunction (see \cite[Corollary 8.9]{AltmanKleiman2021}) with respect to the right $\germe_X$-module structure of $\PX^k(\mcE)$, it induces the isomorphism
    \begin{equation*}
        \begin{split}
        \PX^k(\CTF^1(\mcE)) & \rightarrow \mcHom_{\germe_X}(T_{\F}, \PX^k(\mcE)) \\
        f \otimes \phi & \mapsto \Big(v\in T_{\F} \mapsto f\otimes \phi(v) \Big).
        \end{split}
    \end{equation*}
    Making use of this isomorphism, we regard the prolongation of $\nabla$ as the $\germe_X$-linear morphism
    \begin{equation*}
        \begin{split}
            \phi^k_{\nabla}: \PX^{k+1}(\mcE) & \rightarrow \mcHom_{\germe_X}(T_{\F}, \PX^k(\mcE)). 
        \end{split}
    \end{equation*}
    
    For all $v\in T_{\F}$, considering the evaluation $\mathrm{ev}_v: \mcHom_{\germe_X}(T_{\F}, \PX^k(\mcE)) \rightarrow \PX^k(\mcE)$, we have that
    \[
    \mathrm{ev}_v \circ \phi^k_{\nabla}(f\otimes s) = \mathrm{ev}_v(f\otimes \nabla(s)) = f\cdot \nabla_v(s) = \phi^k_{\nabla_v}(f\otimes s),
    \]
    and therefore, for all $\xi \in \PX^k(\mcE)$,
    \[
        \phi^{k}_{\nabla}(\xi)=0 \iff \phi^{k}_{\nabla_v}(\xi)=0, \forall v\in T_{\F}.
    \]
    This concludes the proof.
\end{proof}

Finally, let us consider the prolongation of partial connection in terms of the natural short exact sequence associated to the sheaf of jets ( as in Equation (\ref{Eq: short exact sequence of the sheaf of jets})). Directly from the definitions, for all $k\ge 1$ we obtain the following commutative diagram:
\begin{equation}\label{D: prolongation of partial connection}
    \begin{tikzpicture}[baseline=(current bounding box.center)]
        \matrix(m)[matrix of math nodes, column sep = 2em, row sep = 2em]
        {
            0 & \Sym^{k+1}(\Omega_X^1)(\mcE) & \PX^{k+1}(\mcE) & \PX^{k}(\mcE) & 0 \\
            0 & \Sym^{k}(\Omega_X^1)\big(\CTF^1(\mcE)\big) & \PX^{k}(\CTF^1(\mcE)) & \PX^{k-1}(\CTF^1(\mcE)) & 0 \\
            };
        \path[->]
            (m-1-1) edge (m-1-2)
            (m-1-2) edge (m-1-3) edge node[left]{$\theta$}(m-2-2)
            (m-1-3) edge (m-1-4) edge node[left]{$\phi_{\nabla}^{k}$}(m-2-3)
            (m-1-4) edge (m-1-5) edge node[left]{$\phi_{\nabla}^{k-1}$}(m-2-4)
            (m-2-1) edge (m-2-2)
            (m-2-2) edge (m-2-3)
            (m-2-3) edge (m-2-4)
            (m-2-4) edge (m-2-5)
        ;
    \end{tikzpicture}
\end{equation}
where $\theta: \Sym^{k+1}(\Omega_X^1)(\mcE) \rightarrow \Sym^{k}(\Omega_X^1)\big(\CTF^1(\mcE)\big)$ is given by 
\[
\theta(df_1\cdots df_{k+1}\otimes e) = \sum_{i=1}^{k+1} df_1 \cdots \widehat{df_i} \cdots df_{k+1} \otimes (\df(f_i) \otimes e).
\]
\begin{prop}
    Let $\F$ be a foliation on a smooth complex variety $X$. Let $\mcE$ be a coherent sheaf on $X$. For all $k\ge 0$, let $\mcHom(\Sym^{k+1} T_X/T_{\F}, \mcE) \rightarrow \Sym^{k+1}(\Omega_X^1)(\mcE)$ be the natural morphism induced by the projection $T_X \rightarrow T_X/T_{\F}$. Then,
    \[
    0 \rightarrow \mcHom(\Sym^{k+1} T_X/T_{\F}, \mcE) \rightarrow \Sym^{k+1}(\Omega_X^1)(\mcE) \xrightarrow{\theta} \Sym^k(\Omega_X^1)(\CTF^1(\mcE))
    \]
    is a short exact sequence.
\end{prop}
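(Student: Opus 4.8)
The plan is to recognize $\theta$ as the $\mcHom(-,\mcE)$-dual of a single natural map of $\germe_X$-modules, and then to deduce the statement from the left-exactness of $\mcHom(-,\mcE)$ together with the standard description of the symmetric algebra of a quotient.

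First I would set up the duality. Since $X$ is smooth, $T_X$ and all $\Sym^j T_X$ are locally free of finite rank, so the canonical pairings give natural isomorphisms
\[
\Sym^{k+1}(\Omega_X^1)(\mcE)\cong\mcHom(\Sym^{k+1}T_X,\mcE), \qquad \Sym^k(\Omega_X^1)(\CTF^1(\mcE))\cong\mcHom(\Sym^k T_X,\germe_X)\otimes\mcHom(\TF,\mcE)\cong\mcHom(\Sym^k T_X\otimes\TF,\mcE).
\]
Let $m\colon\Sym^k T_X\otimes\TF\to\Sym^{k+1}T_X$ be the multiplication in $\Sym^\bullet T_X$ precomposed with $\mr{id}\otimes(\TF\hookrightarrow T_X)$. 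I claim that under these identifications the $\germe_X$-linear morphism $\theta$ is $\mcHom(m,\mcE)$. To check this it is enough to evaluate on the generators $df_1\cdots df_{k+1}\otimes e$ (which generate over $\germe_X$ because $d\germe_X$ generates $\Omega_X^1$): expanding along the $\TF$-slot the permanent that defines the pairing $\Sym^{k+1}\Omega_X^1\times\Sym^{k+1}T_X\to\germe_X$ shows that $\mcHom(m,\germe_X)(df_1\cdots df_{k+1})=\sum_{i}df_1\cdots\widehat{df_i}\cdots df_{k+1}\otimes\df(f_i)$, which is exactly the defining formula for $\theta$ after tensoring with $\mr{id}_\mcE$.

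With this in hand the argument is essentially formal. The map $m$ fits into the degree-$(k+1)$ part of the presentation of the symmetric algebra of the quotient: for any submodule $\TF\subseteq T_X$ the surjection $\Sym^\bullet T_X\twoheadrightarrow\Sym^\bullet(T_X/\TF)$ has kernel the ideal generated by $\TF$, so
\[
\Sym^k T_X\otimes\TF\xrightarrow{\ m\ }\Sym^{k+1}T_X\xrightarrow{\ \Sym^{k+1}p\ }\Sym^{k+1}(T_X/\TF)\to 0
\]
is exact, where $p\colon T_X\to T_X/\TF$ is the projection; this is a purely algebraic, stalk-wise fact following from the universal property of $\Sym$. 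Applying the left-exact contravariant functor $\mcHom(-,\mcE)$ gives the exact sequence
\[
0\to\mcHom(\Sym^{k+1}(T_X/\TF),\mcE)\to\mcHom(\Sym^{k+1}T_X,\mcE)\xrightarrow{\ \mcHom(m,\mcE)\ }\mcHom(\Sym^k T_X\otimes\TF,\mcE),
\]
and under the identifications above this is precisely the sequence in the statement: the middle and right terms are $\Sym^{k+1}(\Omega_X^1)(\mcE)$ and $\Sym^k(\Omega_X^1)(\CTF^1(\mcE))$, the right-hand map is $\theta$, and $\mcHom(\Sym^{k+1}p,\mcE)$ is by construction the morphism induced by the projection $T_X\to T_X/\TF$; in particular the first map is injective.

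The only step that requires genuine work is the identification of $\theta$ with $\mcHom(m,\mcE)$ — i.e.\ matching the combinatorial formula defining $\theta$ against the transpose of the multiplication map, and checking compatibility of the hom--tensor identifications. Everything else is standard: the symmetric-algebra presentation is classical and the rest is left-exactness of $\mcHom$. I would also point out that this route needs no hypothesis on $\TF$ or $T_X/\TF$ beyond being $\germe_X$-modules (they need not be locally free, e.g.\ over $\sing(\F)$), since $\mcHom(-,\mcE)$ is left-exact in complete generality; smoothness of $X$ enters only to make $\Sym^j T_X$ locally free, which is what validates the duality isomorphisms.
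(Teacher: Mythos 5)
Your proposal is correct and follows essentially the same route as the paper: both arguments start from the right-exact presentation $T_{\F}\otimes\Sym^k T_X\to\Sym^{k+1}T_X\to\Sym^{k+1}(T_X/T_{\F})\to 0$ of the symmetric power of the quotient, apply the left-exact functor $\mcHom(-,\mcE)$, and then identify the resulting terms and the transpose of the multiplication map with $\Sym^{k+1}(\Omega_X^1)(\mcE)$, $\Sym^k(\Omega_X^1)(\CTF^1(\mcE))$ and $\theta$ via the duality isomorphisms coming from smoothness of $X$. You merely spell out the "straightforward verification" that the paper leaves implicit.
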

\begin{proof}
    Let us start with the short exact sequence determined by the tangent sheaf of the foliation:
    \[
    0 \rightarrow T_{\F} \rightarrow T_X \rightarrow \frac{T_X}{T_{\F}} \rightarrow 0
    \]
    Applying the functor $\Sym^{k+1}$, we deduce the short exact sequence (see \cite[Proposition A2.2]{eisenbud-zbMATH00704831})
    \[
    T_{\F} \otimes \Sym^k(T_X) \rightarrow \Sym^{k+1}(T_X) \rightarrow \Sym^{k+1}\left(\frac{T_X}{T_{\F}} \right) \rightarrow 0.
    \]
    Next, applying the functor $\mcHom(\cdot, \mcE)$, we obtain
    \[
    0 \rightarrow \mcHom(\Sym^{k+1} T_X/T_{\F}, \mcE) \rightarrow \mcHom(\Sym^{k+1} T_X, \mcE) \rightarrow \mcHom(T_{\F} \otimes \Sym^{k} T_X, \mcE)
    \]
    Finally, using the natural isomorphisms $\Sym^{k+1}(\Omega_X^1) \otimes \mcE \rightarrow \mcHom(\Sym^{k+1} T_X, \mcE)$ and $\Sym^k(\Omega_X^1) \otimes \CTF^1(\mcE) \rightarrow \mcHom(\Sym^k T_X, \CTF^1(\mcE))$, a straightforward verification shows that we obtain the morphism $\theta$ in the short exact sequence. This concludes the proof.
\end{proof}

Observe that we have a natural isomorphism 
\[
    \mcHom(\Sym^{k+1}T_X/T_{\F},\mcE) \simeq \mcHom(\Sym^{k+1} N_{\F}, \mcE),
\]
and for consistency with the notation we introduced up to now, let us denote these sheaves by $\Sym^{k+1}(\CNF)(\mcE)$. Therefore, from Diagram (\ref{D: prolongation of partial connection}), we obtain the short exact sequence
\begin{equation}\label{Eq: short exact sequence of the sheaf of transverse jets 0}
    0 \rightarrow \Sym^{k+1}(\CNF)(\mcE) \rightarrow \ker (\phi^{k+1}_{\nabla}) \rightarrow \ker(\phi^{k}_{\nabla})
\end{equation}

\subsection{Transverse jets of flat partial connections}\label{Subsection: transverse jets}

Let $\F$ be a foliation on a complex smooth variety $X$. Let $(\mcE,\nabla)$ be a flat partial connection on a coherent sheaf. For every $k\ge 0$, we define the \emph{$k$-th sheaf of transverse jets of $(\mcE,\nabla)$} as
\[
\PXF^k(\nabla):= \big(\DXF{k}(\nabla,\germe_X)\big)^*.
\]
Considering the flat partial connection $(\nabla,\df)^k$ on $\DXF{k}(\nabla,\germe_X)$ (see Theorem \ref{T: connection on the transverse differential operators}), the sheaf of transverse jets is naturally endowed with a flat partial connection given as the dual of $(\nabla,\df)^k$, which we denoted by $\nabla^k$, and it is given by
\begin{equation}\label{Eq: definition of the connection on transverse jets}
    \nabla_v^k(\xi)(D) = v(\xi(D)) - \xi((\nabla,\df)^k_v(D)),
\end{equation}
for all $v\in T_{\F}$,$\xi \in \PXF^k(\nabla)$, and $D\in \DXF{k}(\nabla,\germe_X)$. We refer to the pair $(\PXF^k(\nabla), \nabla^k)$ as the \emph{$k$-th jet of the flat partial connection $(\mcE,\nabla)$}.

Let us leave aside the connection $\nabla^k$ on $\PXF^k(\nabla)$ for a moment, and focus only on the sheaf of transverse jets. Recall that we have a short exact sequence obtained by the definition of the sheaf of transverse parts:
\[
0 \rightarrow S^k(\nabla) \rightarrow \DX{k}(\mcE,\germe_X) \xrightarrow{\pi} \DXF{k}(\nabla,\germe_X) \rightarrow 0.
\]
Dualizing, it induces a short exact sequence 
\begin{equation}\label{Eq: definition of the sheaf of transverse jets 1}
    0 \rightarrow \PXF^k(\nabla) \xrightarrow{\iota} \big(\DX{k}(\mcE,\germe_X)\big)^* \rightarrow \big(S^k(\nabla)\big)^*,
\end{equation}
where for every $\xi \in \PXF^k(\nabla)$ and $D\in \DX{k}(\mcE,\germe_X)$, $\iota(\xi)(D) = \xi(\pi(D))$.

Although the discussion above does not require any further assumption on $\mcE$, from now on we are interested in the case where $\mcE$ is reflexive. Recall that, in this case, we have that $\PX^k(\mcE)$ is reflexive by Proposition \ref{P: jet of reflexive sheaf}, and as in Equation (\ref{Eq: jet is the dual of differential operators}), we have that the natural morphism
\begin{equation*}
    \begin{split}
        \PX^k(\mcE) & \rightarrow \big(\DX{k}(\mcE,\germe_X)\big)^* \\
        \xi & \mapsto \Big(D\in \DX{k}(\mcE,\germe_X) \mapsto \phi_D(\xi) \Big)
    \end{split}
\end{equation*}
is an isomorphism. Combined with the short exact sequence of Equation (\ref{Eq: definition of the sheaf of transverse jets 1}), this induces the short exact sequence
\begin{equation}\label{Eq: definition of the sheaf of transverse jets 2}
    0 \rightarrow \PXF^k(\nabla) \xrightarrow{\iota} \PX^k(\mcE) \rightarrow (S^k(\nabla))^*
\end{equation}
such that $\phi_D(\iota(\xi)) = \xi(\pi(D))$ for all $\xi \in \PXF^k(\nabla)$ and $D\in \DX{k}(\mcE,\germe_X)$. This allows us to regard $\PXF^k(\nabla)$ as a $\germe_X$-submodule of $\PX^k(\mcE)$ (with respect to the canonical structure). We abuse notation and denote $\iota(\xi)$ simply by $\xi$.

Concerning the morphism $\PX^k(\mcE) \rightarrow \big(S^k(\nabla)\big)^*$, it corresponds to the evaluation of the system of differential operators $S^k(\nabla)$ on the elements of $\PX^k(\mcE)$. Hence, we have that

\begin{equation*}
    \begin{split}
    \xi \in \PXF^k(\nabla) & \iff \phi_{D \circ \nabla_v}(\xi)=0, \forall D\in \DX{k-1}(\mcE,\germe_X), v\in T_{\F} \\
    & \xLeftrightarrow{\text{Eq.~(\ref{Eq: prolongation 2})}} \phi_D \circ \phi_{\nabla_v}^{k-1}(\xi) =0, \forall D\in \DX{k-1}(\mcE,\germe_X), v\in T_{\F} \\
    & \xLeftrightarrow{\text{Prop.~\ref{P: jet of torsion-free sheaf}}} \phi_{\nabla_v}^{k-1}(\xi)=0, \forall v\in T_{\F}
\end{split}
\end{equation*}
Thus, by Proposition \ref{P: prolongation of the connection}, we obtain from Equation (\ref{Eq: definition of the sheaf of transverse jets 2}) the short exact sequence
\begin{equation}\label{Eq: definition of the sheaf of transverse jets 3}
    0 \rightarrow \PXF^k(\nabla) \xrightarrow{\iota} \PX^k(\mcE) \xrightarrow{\phi_{\nabla}^{k-1}} \PX^{k-1}(\CTF^1(\mcE)),
\end{equation}
Finally, given a section $\xi = \sum f_i \otimes s_i \in \PX^k(\mcE)$, we deduce from Equation (\ref{Eq: prolongation 2}) that
    \begin{equation}\label{Eq: definition of the sheaf of transverse jets 4}
            \xi \in \PXF^k(\nabla) \iff \sum f_i \otimes \nabla(s_i)=0 \in \PX^{k-1}(\CTF^1(\mcE)).
    \end{equation}

After this discussion on the sheaf of transverse jets, let us now bring back the connection $\nabla^k$ on $\PXF^k(\nabla)$.
\begin{prop}\label{P: description of the partial connection on the sheaf of transverse jets}
     Let $\F$ be a foliation on a smooth complex variety $X$. Let $(\mcE,\nabla)$ be a flat partial connection on a reflexive sheaf. Let $\xi = \sum f_i \otimes s_i \in \PXF^k(\nabla)$ be a section of the sheaf of transverse jets. Then, for all $v\in T_{\F}$,
    \begin{equation*}
        \nabla_v^k(\xi) = \sum  v(f_i) \otimes s_i + f_i \otimes \nabla_v(s_i).
    \end{equation*}
\end{prop}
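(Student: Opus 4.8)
The plan is to compute $\nabla^k_v(\xi)$ directly from its defining formula (\ref{Eq: definition of the connection on transverse jets}), using the embedding $\PXF^k(\nabla) \subset \PX^k(\mcE)$ from the short exact sequence (\ref{Eq: definition of the sheaf of transverse jets 2}), which identifies $\xi$ with a genuine $k$-jet $\sum f_i \otimes s_i$ and identifies a class $D \in \DXF{k}(\nabla,\germe_X)$ with the evaluation $\phi_D$. Under this identification, for $D \in \DX{k}(\mcE,\germe_X)$ we have $\xi(D) = \phi_D(\xi) = \sum f_i \cdot D(s_i)$. So the task reduces to rewriting $v(\xi(D)) - \xi((\nabla,\df)^k_v(D))$ and checking that the result coincides, as a functional on $\DXF{k}(\nabla,\germe_X)$, with the $k$-jet $\sum v(f_i) \otimes s_i + f_i \otimes \nabla_v(s_i)$ — after first verifying that this latter expression indeed lies in $\PXF^k(\nabla)$, i.e. that $\nabla^k$ preserves the subsheaf, which is already guaranteed since $\nabla^k$ is defined as the dual connection on $\PXF^k(\nabla) = (\DXF{k}(\nabla,\germe_X))^*$.

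First I would expand the second term. By Theorem \ref{T: connection on the transverse differential operators} with $\mcE' = \germe_X$ and $\nabla' = \df$, we have $(\nabla,\df)^k_v(D) = v \circ D - D \circ \nabla_v$ (here $v \circ D$ means $\df_v \circ D$, i.e. $v$ acting on the scalar output). Therefore
\[
\xi\big((\nabla,\df)^k_v(D)\big) = \sum f_i \cdot \big(v(D(s_i)) - (D \circ \nabla_v)(s_i)\big) = \sum f_i \cdot v(D(s_i)) - \sum f_i \cdot D(\nabla_v(s_i)).
\]
On the other hand, $v(\xi(D)) = v\big(\sum f_i \cdot D(s_i)\big) = \sum v(f_i) \cdot D(s_i) + \sum f_i \cdot v(D(s_i))$ by the Leibniz rule for the derivation $v$. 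Subtracting, the cross terms $\sum f_i \cdot v(D(s_i))$ cancel, leaving
\[
\nabla^k_v(\xi)(D) = \sum v(f_i) \cdot D(s_i) + \sum f_i \cdot D(\nabla_v(s_i)),
\]
which is exactly the value of the $k$-jet $\sum v(f_i) \otimes s_i + f_i \otimes \nabla_v(s_i)$ evaluated on $\phi_D$. Since this holds for all $D$, and since $\PX^k(\mcE) \to (\DX{k}(\mcE,\germe_X))^*$ is an isomorphism for reflexive $\mcE$ (as recalled after Proposition \ref{P: jet of reflexive sheaf}), the two sides agree in $\PX^k(\mcE)$, hence in $\PXF^k(\nabla)$.

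The one point requiring a little care — and the place I would be most cautious — is the bookkeeping of the two different notions of "$D$": in formula (\ref{Eq: definition of the connection on transverse jets}) the argument ranges over the quotient $\DXF{k}(\nabla,\germe_X)$, whereas the computation above is phrased with honest operators $D \in \DX{k}(\mcE,\germe_X)$. This is harmless because $\PXF^k(\nabla)$ was defined precisely as the dual of the quotient, so a functional on the quotient is the same as a functional on $\DX{k}(\mcE,\germe_X)$ that kills $S^k(\nabla)$; pulling everything back to $\DX{k}(\mcE,\germe_X)$ and invoking the reflexivity isomorphism is legitimate, and one should note that the right-hand side $\sum v(f_i)\otimes s_i + f_i \otimes \nabla_v(s_i)$ automatically annihilates $S^k(\nabla)$ because $\nabla^k$ is a connection on $\PXF^k(\nabla)$. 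Beyond this, the argument is a routine application of the Leibniz rule, so no serious obstacle is expected.
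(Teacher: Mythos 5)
Your proposal is correct and follows essentially the same route as the paper: evaluate $\nabla^k_v(\xi)$ on a lift $\widehat{D}\in \DX{k}(\mcE,\germe_X)$ of $D$, expand $v(\xi(D))-\xi((\nabla,\df)^k_v(D))$ by the Leibniz rule so the terms $\sum f_i\cdot v(\widehat{D}(s_i))$ cancel, and identify the result with the jet $\sum v(f_i)\otimes s_i+f_i\otimes\nabla_v(s_i)$ via the reflexivity isomorphism $\PX^k(\mcE)\simeq(\DX{k}(\mcE,\germe_X))^*$. The only cosmetic difference is that the paper checks membership of the right-hand side in $\PXF^k(\nabla)$ up front using Equation (\ref{Eq: definition of the sheaf of transverse jets 4}), whereas you obtain it as a consequence of the functional killing $S^k(\nabla)$; both are legitimate.
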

\begin{proof}
    First, a straightforward calculation using  Equation (\ref{Eq: definition of the sheaf of transverse jets 4}) shows that
    \[
        \sum  v(f_i) \otimes s_i + f_i \otimes \nabla_v(s_i) \in \PXF^k(\nabla).
    \]
    Using the expression for the $\nabla_v^k$ as in Equation (\ref{Eq: definition of the connection on transverse jets}), for all $D\in \DXF{k}(\nabla,\mcE)$ we calculate that
    \begin{equation*}
        \begin{split}
            \nabla^k_v(\xi)(D) & = v\big(\xi(D)\big) - \xi\big((\nabla,\df)^k_v(D)\big) \\
            & = v\big(\phi_{\widehat{D}}(\xi)\big) - \phi_{v\circ \widehat{D} - \widehat{D} \circ \nabla_v}(\xi) \\
            & = v\left(\sum f_i \cdot \widehat{D}(s_i) \right) - \sum f_i \cdot \big(v\circ \widehat{D} - \widehat{D} \circ \nabla_v\big)(s_i) \\
            & = \sum v(f_i)\cdot \widehat{D}(s_i) + \sum f_i \cdot \widehat{D}\big(\nabla_v(s_i)\big) \\
            & = \phi_{\widehat{D}}\left(\sum v(f_i)\otimes s_i + f_i \otimes \nabla_v(s_i)\right) \\
            & = \left(\sum v(f_i)\otimes s_i + f_i \otimes \nabla_v(s_i)\right)(D),
        \end{split}
    \end{equation*}
    where $\widehat{D}\in \DX{k}(\mcE,\germe_X)$ is any differential operator such that $\pi(\widehat{D})=D$. Since the equality holds for all $D$, it follows that $\nabla_v^k(\xi) = \sum  v(f_i) \otimes s_i + f_i \otimes \nabla_v(s_i)$. This concludes the proof.
\end{proof}

This is the appropriate moment to recall that in \cite[Section 4.2]{fazoli25-arXiv:2505.11662}, under the additional hypothesis of smoothness on $\F$, one can find another construction for the $k$-jet of a flat partial connection on a locally free sheaf. This construction, as discussed in Section \ref{Subsection: introduction, partial connections, jets}, relies on the correspondence between flat partial connections on locally free sheaves, and locally free $\germe_{X/\F}$-modules (see \cite[Corollary 3.8]{fazoli25-arXiv:2505.11662}), which in turn works only on the analytic category. 

On the other hand, the preceding discussion applies in a much more general context: it works both in analytic and algebraic categories, for singular foliations, and for reflexive sheaves. Hence, in order to complete the discussion we presented in the introduction, it remains to verify that both constructions coincide in the context of \cite[Section 4.2]{fazoli25-arXiv:2505.11662}.

\begin{cor}\label{Cor: transverse jets}
    Let $\F$ be a smooth foliation on a smooth complex variety $X$ in the analytic category. Let $(\mcE,\nabla)$ be a flat partial connection on a locally free sheaf. Then
    \[
    \PXF^k(\nabla) = \left\{\sum f_i \otimes s_i \mid f_i\in \germe_X, s_i \in \ker \nabla \right\} \subset \PX^k(\mcE),
    \]
    and
    \[
    \ker \nabla^k = \left\{\sum f_i \otimes s_i \mid f_i\in \germe_{X/\F}, s_i \in \ker \nabla \right\} \subset \PX^k(\mcE).
    \]
    That is, the flat partial connection $(\PXF^k(\nabla),\nabla^k)$ coincides with the $k$-jet of the flat partial connection $(\mcE,\nabla)$ as defined in \cite[Section 4.2]{fazoli25-arXiv:2505.11662}.
\end{cor}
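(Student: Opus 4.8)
I would set $\mathcal J^k$ for the $\germe_X$-submodule of $\PX^k(\mcE)$ generated by $d^k(\ker\nabla)$, so that $\mathcal J^k=\{\sum f_i\otimes s_i\mid f_i\in\germe_X,\ s_i\in\ker\nabla\}$, and $\mathcal K^k$ for its $\germe_{X/\F}$-submodule. The inclusion $\mathcal J^k\subseteq\PXF^k(\nabla)$ is immediate from Equation (\ref{Eq: definition of the sheaf of transverse jets 4}): if $s_i\in\ker\nabla$ then $\sum f_i\otimes\nabla(s_i)=0$. Likewise $\mathcal K^k\subseteq\ker\nabla^k$ follows at once from Proposition \ref{P: description of the partial connection on the sheaf of transverse jets}, since $v(f_i)=0$ and $\nabla_v(s_i)=0$ for $f_i\in\germe_{X/\F}$, $s_i\in\ker\nabla$ and $v\in T_{\F}$. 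So the statement reduces to the two reverse inclusions, which I would check on a neighbourhood of each point.

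Since $X$ is analytic, $\F$ smooth, and $\nabla$ a flat partial connection on a locally free sheaf, the Frobenius theorem for flat partial connections (underlying \cite[Corollary 3.8]{fazoli25-arXiv:2505.11662}) furnishes, near any point, coordinates $(x_1,\dots,x_p,y_1,\dots,y_q)$ with $T_{\F}=\langle\partial_{x_1},\dots,\partial_{x_p}\rangle$ and $\germe_{X/\F}$ the subring of functions of the $y_l$ alone, together with a frame $e_1,\dots,e_r$ of $\mcE$ by flat sections of $\nabla$. Writing $\upsilon_l=1\otimes y_l-y_l\otimes1\in\PX^k$, a Taylor expansion gives $d^k(he_j)=d^k_X(h)\otimes e_j=\sum_{|\beta|\le k}\frac1{\beta!}(\partial_y^\beta h)(\upsilon^\beta\otimes e_j)$ for $h\in\germe_{X/\F}$; inverting this unitriangular system (taking $h=y^\gamma$) shows $\upsilon^\gamma\otimes e_j\in\mathcal K^k$, hence locally
\[
\mathcal K^k=\bigoplus_{|\beta|\le k,\,j}\germe_{X/\F}(\upsilon^\beta\otimes e_j),\qquad \mathcal J^k=\bigoplus_{|\beta|\le k,\,j}\germe_X(\upsilon^\beta\otimes e_j);
\]
in particular each $\upsilon^\beta\otimes e_j$ already lies in $\PXF^k(\nabla)$.

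It remains to prove $\PXF^k(\nabla)\subseteq\mathcal J^k$, which I would do by induction on $k$: the case $k=0$ is $\PXF^0(\nabla)=(\mcE^*)^*=\mcE=\mathcal J^0$ ($\mcE$ reflexive; the flat frame generates $\mcE$). For the step I would combine the left-exact sequence coming from Diagram (\ref{D: prolongation of partial connection}) / Equation (\ref{Eq: short exact sequence of the sheaf of transverse jets 0}), suitably reindexed, namely $0\to\Sym^k(\CNF)(\mcE)\to\PXF^k(\nabla)\xrightarrow{\pi}\PXF^{k-1}(\nabla)$, with the sequence $0\to\Sym^k(\CNF)(\mcE)\to\mathcal J^k\xrightarrow{\pi}\mathcal J^{k-1}\to0$ obtained by intersecting the jet sequence (\ref{Eq: short exact sequence of the sheaf of jets}) with $\mathcal J^k$; the local model identifies $\Sym^k(\CNF)(\mcE)$ inside $\PX^k(\mcE)$ with $\bigoplus_{|\beta|=k,\,j}\germe_X(\upsilon^\beta\otimes e_j)=\mathcal J^k\cap\Sym^k(\Omega_X^1)(\mcE)$ and shows $\pi(\mathcal J^k)=\mathcal J^{k-1}$. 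As $\mathcal J^k\subseteq\PXF^k(\nabla)$ and, by the inductive hypothesis, $\pi(\mathcal J^k)=\mathcal J^{k-1}=\PXF^{k-1}(\nabla)$, the map $\pi$ on $\PXF^k(\nabla)$ is surjective with kernel $\Sym^k(\CNF)(\mcE)\subseteq\mathcal J^k$, forcing $\mathcal J^k=\PXF^k(\nabla)$. (Alternatively one can compute the prolongations $\phi^{k-1}_\nabla(\tau^\alpha\upsilon^\beta\otimes e_j)$, with $\tau_i=1\otimes x_i-x_i\otimes1$, directly and check $\germe_X$-linear independence for $\alpha\neq0$; the inductive route is shorter.)

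For the second equality I would show that each $\upsilon^\beta\otimes e_j$ is $\nabla^k$-flat. Expanding $\upsilon^\beta=\prod_l(1\otimes y_l-y_l\otimes1)^{\beta_l}$ in $\PX^k$ writes $\upsilon^\beta\otimes e_j=\sum_{\gamma\le\beta}(-1)^{|\beta-\gamma|}\binom{\beta}{\gamma}\,y^{\beta-\gamma}\otimes(y^\gamma e_j)$ with coefficients $y^{\beta-\gamma}\in\germe_{X/\F}$ and sections $y^\gamma e_j\in\ker\nabla$, so Proposition \ref{P: description of the partial connection on the sheaf of transverse jets} gives $\nabla^k_v(\upsilon^\beta\otimes e_j)=0$ for all $v\in T_{\F}$. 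Hence $\{\upsilon^\beta\otimes e_j\}$ is a local flat frame of $(\PXF^k(\nabla),\nabla^k)$, so this pair is locally isomorphic to $(\germe_X^{\oplus N},\df)$; any $\xi\in\ker\nabla^k$ lies in $\PXF^k(\nabla)=\mathcal J^k$, hence locally in $\bigoplus\germe_X(\upsilon^\beta\otimes e_j)$, and $\nabla^k_v\xi=0$ forces its coefficients into $\germe_{X/\F}$, giving $\ker\nabla^k=\mathcal K^k$ in the chart. Gluing over a cover yields both equalities globally, and comparison with the definition recalled in \cite[Section 4.2]{fazoli25-arXiv:2505.11662} gives the last assertion. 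The only genuinely non-formal point is the inclusion $\PXF^k(\nabla)\subseteq\mathcal J^k$ — equivalently, pinning down the rank of $\ker\phi^{k-1}_\nabla$ — together with the small local bookkeeping needed to recognise the term $\Sym^k(\CNF)(\mcE)$ simultaneously inside $\PXF^k(\nabla)$ and inside $\mathcal J^k$.
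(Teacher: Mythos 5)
Your proposal is correct and takes essentially the same route as the paper, whose proof consists of the single remark that the statement follows from a straightforward computation using Equation (\ref{Eq: definition of the sheaf of transverse jets 4}) and Proposition \ref{P: description of the partial connection on the sheaf of transverse jets} with a local flat basis for $\mcE$. You have simply written out that computation in full, correctly isolating and handling the one step that is not purely formal, namely the inclusion $\PXF^k(\nabla)\subseteq\mathcal J^k$ via the left-exact sequence (\ref{Eq: short exact sequence of the sheaf of transverse jets}) and induction on $k$.
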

\begin{proof}
    The proof is a straightforward computation using Equation (\ref{Eq: definition of the sheaf of transverse jets 4}) and Proposition \ref{P: description of the partial connection on the sheaf of transverse jets} with a local flat basis for $\mcE$.
\end{proof}

\subsection{The short exact sequence of the sheaves of transverse jets}

Let $(\mcE,\nabla)$ be a flat partial connection on a reflexive sheaf. Using the characterization of the sheaf of transverse jets of Equation (\ref{Eq: definition of the sheaf of transverse jets 3}), the short exact sequence of Equation (\ref{Eq: short exact sequence of the sheaf of transverse jets 0}) becomes
\begin{equation}\label{Eq: short exact sequence of the sheaf of transverse jets}
    0 \rightarrow \Sym^k(\CNF)(\mcE) \xrightarrow{\iota} \PXF^k(\nabla) \xrightarrow{\pi} \PXF^{k-1}(\nabla).
\end{equation}
We refer to it as the \emph{short exact sequence associated to the $k$-th sheaf of transverse jets of $\nabla$}.

\begin{rmk}
    Contrary to the case of smooth foliations and transverse jets of flat partial connections on locally free sheaves (as in \cite[Corollary 4.6]{fazoli25-arXiv:2505.11662}), the morphism $\PXF^k(\nabla) \rightarrow \PXF^{k-1}(\nabla)$ is not necessarily surjective. For instance, for $(\mcE,\nabla)=(\CNF,\nabla_B)$, the image of $\PXF^1(\nabla_B) \rightarrow \CNF$ is related to the concept of the ideal of persistent singularities. This will be discussed in Section \ref{Subsection: persistent singularities}. 
\end{rmk}

Observe that the sheaves involved in Equation (\ref{Eq: short exact sequence of the sheaf of transverse jets}) are naturally endowed with flat partial connections: each $\PXF^k(\nabla)$ is endowed with $\nabla^k$ as in Proposition \ref{P: description of the partial connection on the sheaf of transverse jets}, while $\Sym^k(\CNF)(\mcE)$ is endowed with the flat partial connection  $\nabla' := \mcHom(\Sym^k \nabla_B, \nabla)$ given by
\begin{equation}\label{Eq: flat partial connection in the symbol 0}
    \nabla_v'(\phi)(w_1,\cdots,w_k) = \nabla_v(\phi(w_1,\cdots,w_k)) - \sum_{i=1}^k\phi \Big(w_1, \cdots ,(\nabla_B)_v w_i, \cdots w_k \Big), 
\end{equation}
for all $v\in T_{\F}$, $w_i \in T_X/T_{\F}$ and $\phi \in  \Sym^k(\CNF)(\mcE)$. This is the natural partial connection on $\Sym^k(\CNF)(\mcE)$ considering $\Sym^k$ and $\mcHom$ in the category of flat partial connections on coherent sheaves  (see \cite[Section 3.4]{fazoli25-arXiv:2505.11662}). Moreover, for elements $\Omega \otimes s \in \Sym^k(\CNF)(\mcE)$ with $\Omega \in \Sym^k(\CNF)$ and $s\in \mcE$, a straightforward calculation using Equation (\ref{Eq: flat partial connection in the symbol 0}) shows that
\begin{equation}\label{Eq: flat partial connection in the symbol 1}
    \nabla_v'(\Omega \otimes s) = \Lie_v(\Omega)\otimes s + \Omega \otimes \nabla_v(s). 
\end{equation}

\begin{prop}\label{P: short exact sequence of the sheaf of transverse jets} Let $\F$ be a foliation on a smooth complex variety $X$. Let $(\mcE,\nabla)$ be a flat partial connection on a reflexive sheaf. Then, for all $k\ge 1$, 
    \[
        0 \rightarrow \Big(\Sym^k(\CNF)(\mcE), \nabla' \Big) \xrightarrow{\iota}\big(\PXF^k(\nabla),\nabla^k\big) \xrightarrow{\pi} \big(\PXF^{k-1}(\nabla),\nabla^{k-1}\big)
    \]
is a short exact sequence of flat partial connections, where $\nabla'$ is the connection given by Equation (\ref{Eq: flat partial connection in the symbol 0}).
\end{prop}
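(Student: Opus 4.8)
The underlying sequence of $\germe_X$-modules is exactly the short exact sequence of Equation (\ref{Eq: short exact sequence of the sheaf of transverse jets}), and all three sheaves already carry the flat partial connections named in the statement: $\nabla'=\mcHom(\Sym^k\nabla_B,\nabla)$ is flat because it is obtained from the flat partial connections $\nabla_B$ and $\nabla$ by the functors $\Sym^k$ and $\mcHom$ in the category of flat partial connections, while $\nabla^k$ and $\nabla^{k-1}$ are flat since, by Theorem \ref{T: connection on the transverse differential operators}, the partial connection $(\nabla,\df)^{\bullet}$ on $\DXF{\bullet}(\nabla,\germe_X)$ is flat, and the dual of a flat partial connection is flat. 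Hence the whole content of the proposition is that $\iota$ and $\pi$ are horizontal, that is, $\nabla^k_v\circ\iota=\iota\circ\nabla'_v$ and $\pi\circ\nabla^k_v=\nabla^{k-1}_v\circ\pi$ for every $v\in\TF$.

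Horizontality of $\pi$ should be immediate. Under the inclusions of Equation (\ref{Eq: definition of the sheaf of transverse jets 2}), $\pi$ is just the restriction to $\PXF^k(\nabla)$ of the natural surjection $\PX^k(\mcE)\to\PX^{k-1}(\mcE)$, and the latter is $\germe_X$-linear for both structures and compatible with the operators $d^{\bullet}$. Thus, writing a local section of $\PXF^k(\nabla)$ as $\xi=\sum f_i\otimes s_i$ with $f_i\in\germe_X$ and $s_i\in\mcE$, Proposition \ref{P: description of the partial connection on the sheaf of transverse jets} describes both $\nabla^k_v(\xi)$ and $\nabla^{k-1}_v(\pi(\xi))$ by the same expression $\sum v(f_i)\otimes s_i+f_i\otimes\nabla_v(s_i)$, and $\pi$ clearly intertwines them. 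Once $\pi$ is horizontal, $\iota(\Sym^k(\CNF)(\mcE))=\ker\pi$ is a $\nabla^k$-invariant subsheaf, so $\nabla^k$ does restrict to a flat partial connection there; the remaining point is to identify this restriction with $\nabla'$.

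For the horizontality of $\iota$ the plan is to argue locally on a section of the form $\Omega\otimes s$ with $\Omega\in\Sym^k(\CNF)$ and $s\in\mcE$. On one side, Equation (\ref{Eq: flat partial connection in the symbol 1}) gives $\iota(\nabla'_v(\Omega\otimes s))=\iota(\Lie_v(\Omega)\otimes s)+\iota(\Omega\otimes\nabla_v(s))$. On the other side, using the explicit description of the inclusion $\Sym^k(\Omega_X^1)(\mcE)\hookrightarrow\PX^k(\mcE)$ given after Equation (\ref{Eq: short exact sequence of the sheaf of jets}) (namely $df_1\cdots df_k\otimes e\mapsto\prod_{j=1}^k(d^k f_j-f_j)\otimes e$), one expands $\iota(\Omega\otimes s)$ as an explicit finite sum of terms $f\otimes s'$ with $f\in\germe_X$ and $s'\in\mcE$, applies the formula of Proposition \ref{P: description of the partial connection on the sheaf of transverse jets} term by term, and regroups according to whether $v$ differentiates a coefficient or acts through $\nabla_v$. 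The terms of the second kind should reassemble into $\iota(\Omega\otimes\nabla_v(s))$, and the terms of the first kind into $\iota(\Lie_v(\Omega)\otimes s)$; here it is crucial that $\Lie_v(\Omega)$ again lies in $\Sym^k(\CNF)$ for $v\in\TF$ (the defining property of the Bott connection, ultimately the involutivity of $\TF$), so that the coefficient-differentiation terms really do land in the image of $\iota$.

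The main obstacle is precisely this last regrouping: the symmetric-power bookkeeping in expanding $\prod_{j}(d^k f_j-f_j)$ and the verification that the ``differentiate the coefficients'' part of $\nabla^k_v$ produces exactly the Lie-derivative term $\Lie_v(\Omega)$. A route that sidesteps the computation is to note that the two identities are identities of $\germe_X$-linear maps with values in sheaves of $\germe_X$-homomorphisms into $\CTF^1(\PXF^{\bullet}(\nabla))$, which is torsion-free ($\PXF^{\bullet}(\nabla)$ is a subsheaf of $\PX^{\bullet}(\mcE)$, torsion-free by Proposition \ref{P: jet of torsion-free sheaf}, and a sheaf of homomorphisms into a torsion-free sheaf is torsion-free); it therefore suffices to check horizontality on the dense open set $U\subset X$ where $\F$ is smooth and $\mcE$ is locally free. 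On $U$ one is in the setting of Corollary \ref{Cor: transverse jets}, and choosing locally (analytically, or formally at each point) a $\nabla$-flat frame of $\mcE$ together with first integrals of $\F$ makes $\iota$, $\pi$, $\nabla^k$, $\nabla^{k-1}$ and $\nabla'$ completely explicit, whereupon horizontality is visible by inspection. Either way, once $\iota$ and $\pi$ are shown horizontal the flatness of the induced sub- and quotient-connections is automatic, completing the proof.
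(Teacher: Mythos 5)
Your reduction of the proposition to the horizontality of $\iota$ and $\pi$, and your treatment of $\pi$ via Proposition \ref{P: description of the partial connection on the sheaf of transverse jets}, match the paper exactly. The gap is in the horizontality of $\iota$, which is the real content of the proof and which neither of your two routes actually delivers. Route A (expand $\iota(\Omega\otimes s)$ through the presentation $df_1\cdots df_k\otimes e\mapsto\prod_j(d^kf_j-f_j)\otimes e$ and regroup) is exactly what you concede is the main obstacle, and you do not carry it out; it can be pushed through, but the paper avoids it entirely. The paper instead exploits reflexivity of $\mcE$: via the isomorphism of Equation (\ref{Eq: jet is the dual of differential operators}) it regards $\Omega\otimes s$ as the functional $D\mapsto\phi_D(\Omega\otimes s)$ on $\DX{k}(\mcE,\germe_X)$, observes that this functional kills $\DX{k-1}(\mcE,\germe_X)$ and sends a decomposable operator $D=w_1\circ\cdots\circ w_k\circ\phi$ to $\Omega(w_1,\ldots,w_k)\cdot\phi(s)$, and then computes $\nabla^k_v(\Omega\otimes s)(D)$ directly from the dual-connection formula (\ref{Eq: definition of the connection on transverse jets}) combined with the explicit expression for $(\nabla,\df)^k_v(D)$ in Remark \ref{Rmk: expression for the fpc on transverse diff ope}. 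In that picture $\Lie_v(\Omega)$ appears immediately from the Leibniz rule for the Lie derivative, with no symmetric-power bookkeeping, and one reads off $\nabla^k_v(\Omega\otimes s)=\Lie_v(\Omega)\otimes s+\Omega\otimes\nabla_v(s)=\nabla'_v(\Omega\otimes s)$.

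Route B has an additional soundness issue. The reduction to the open set $U$ where $\F$ is smooth and $\mcE$ is locally free is legitimate: the defect $\nabla^k_v\circ\iota-\iota\circ\nabla'_v$ is $\germe_X$-linear with torsion-free target, so it vanishes if it vanishes on a dense open. But the final step, choosing a $\nabla$-flat frame and first integrals of $\F$ on $U$, is only available in the analytic category, whereas the paper is explicitly set up to cover the algebraic case, where flat algebraic sections and algebraic first integrals need not exist and Corollary \ref{Cor: transverse jets} is stated only analytically. Your parenthetical appeal to formal completions would require a formal analogue of that corollary that the paper does not establish. So to obtain a complete argument valid in both settings you must either finish the regrouping computation of Route A or switch to the paper's evaluation of $\Omega\otimes s$ against decomposable differential operators.
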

\begin{proof}
    We need to verify that $\iota$ and $\pi$ are horizontal morphisms. The fact that $\pi$ is horizontal follows directly from the expressions from $\nabla^k$ and $\nabla^{k-1}$ given by Proposition \ref{P: description of the partial connection on the sheaf of transverse jets}. Hence, it remains to verify that $\iota$ is horizontal as well.

    Observe that is enough to verify that $\iota$ is horizontal only for the sections in the image of the natural morphism $\Sym^k(\CNF) \otimes \mcE \rightarrow \Sym^k(\CNF)(\mcE)$. Let $\Omega \otimes s$ be an element of $\Sym^k(\CNF) \otimes \mcE$, which by the morphism $\iota$, it can be seen as an element of $\Sym^k(\Omega_X) \otimes \mcE$. Recall that, since $\mcE$ is reflexive, we have the isomorphism of Equation (\ref{Eq: jet is the dual of differential operators}), which allow us to see $\Omega\otimes s$ as a morphism
    \begin{equation*}
        \begin{split}
                \Omega\otimes s: \DX{k}(\mcE,\germe_X) & \rightarrow \germe_X \\
                D & \mapsto \phi_D(\Omega\otimes s).
        \end{split}
    \end{equation*}
    Since $\Omega\otimes s \in \ker (\pi: \PX^k(\mcE)\rightarrow \PX^{k-1}(\mcE))$, it follows that $\Omega\otimes s$ vanishes on $\DX{k-1}(\mcE,\germe_X)$. Moreover, for any $D = w_1\circ \cdots \circ w_k \otimes \phi$, where $w_i\in T_X$ and $\phi \in \mcE^*$, we have that
    \[
    \Omega\otimes s(D) = \Omega(w_1,\ldots,w_k) \cdot \phi(s).
    \]
    Hence, using the expression for $\nabla^k$ as in Equation (\ref{Eq: definition of the connection on transverse jets}), we have that
    \[
    \nabla^k(\Omega\otimes s)(D) = v \Big(\Omega(w_1,\ldots,w_k) \cdot \phi(s)\Big) - \Omega\otimes s\Big( (\nabla,\df)_v^k(D)\Big).
    \]
    Using properties of the Lie derivative and Remark \ref{Rmk: expression for the fpc on transverse diff ope}, a straightforward calculation shows that
    \[
    \nabla^k(\Omega\otimes s)(D) = \Lie_v(\Omega)(w_1,\ldots,w_k) \cdot \phi(s) + \Omega(w_1,\ldots,w_k) \cdot \phi(\nabla_v(s)).
    \]
    Thus,
    \[
    \nabla_v^k(\Omega \otimes s) = \Lie_v(\Omega) \otimes s + \Omega \otimes \nabla_v(s),
    \]
    and therefore, by Equation (\ref{Eq: flat partial connection in the symbol 1}), we conclude that $\nabla^k(\Omega\otimes s) = \nabla'(\Omega \otimes s)$. This concludes the proof.
\end{proof}

\section{The first transverse jet and extensions of meromorphic partial connections}\label{Section: extensions}

\subsection{The first transverse jet}\label{Subsection: the first transverse jet}

Let $\F$ be a foliation on a smooth variety $X$. Let $(\mcE,\nabla)$ be a flat partial connection on a reflexive sheaf, and let us consider the first sheaf of transverse jets of $\nabla$. Recall that, by Equation (\ref{Eq: short exact sequence of the sheaf of transverse jets}), $\PXF^1(\nabla)$ fits in the short exact sequence
\begin{equation}\label{Eq: short exact sequence of the first sheaf of transverse jets}
    0 \rightarrow \CNF(\mcE) \xrightarrow{\iota} \PXF^1(\nabla) \xrightarrow{\pi} \mcE,
\end{equation}
where $\iota$ is the morphism induced by the natural inclusion $\Omega_X^1(\mcE) \rightarrow \PX^1(\mcE)$, and $\pi$ is induced by the natural projection $\pi: \PX^1(\mcE) \rightarrow \mcE$. Moreover, by Proposition \ref{P: short exact sequence of the sheaf of transverse jets}, considering the respective natural flat partial connections, this is a short exact sequence in the category of flat partial connections.

Recall that every section $\xi \in \PX^1(\mcE)$ can be written in the form
\[
\xi = 1\otimes s + \sum \omega_i\otimes s_i, 
\]
for some $s\in \mcE$, $s_i\in \mcE$ and $\omega_i\in \Omega_X^1$. Regarding this expression, Equation (\ref{Eq: definition of the sheaf of transverse jets 4}) and Proposition \ref{P: description of the partial connection on the sheaf of transverse jets} admit the following reformulations.

\begin{prop}
    Let $\F$ be a foliation on a smooth variety $X$, and let $(\mcE,\nabla)$ be a flat partial connection on a reflexive sheaf. Then,
    \begin{equation}\label{Eq: description of the first sheaf of transverse jets}
        \PXF^1(\nabla) = \left\{1\otimes s - \sum \omega_i \otimes s_i \mid \nabla(s) = \sum \restr{\omega_i}{T_{\F}} \otimes s_i\right\} \subset \PX^1(\mcE).
    \end{equation}
    Moreover, for every $\xi = 1\otimes s - \sum \omega_i \otimes s_i \in \PXF^1(\nabla)$, we have that
    \begin{equation}\label{Eq: description of the partial connection on the first sheaf of transverse jets}
        \nabla^1_v(\xi) = 1\otimes \nabla_v(s) - \sum \omega_i \otimes \nabla_v(s_i) - \Lie_v(\omega_i) \otimes s_i
    \end{equation}
\end{prop}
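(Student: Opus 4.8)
The plan is to obtain both displays by unwinding Equation~(\ref{Eq: definition of the sheaf of transverse jets 4}) and Proposition~\ref{P: description of the partial connection on the sheaf of transverse jets} in the presentation $\xi = 1\otimes s - \sum\omega_i\otimes s_i$ with $\omega_i\in\Omega_X^1$, rather than the presentation $\xi=\sum f_i\otimes s_i$ with $f_i\in\germe_X$ used there. The dictionary between the two is the single identity
\[
    \iota(g\,dh\otimes m)\;=\;g\otimes(hm)-gh\otimes m\quad\text{in }\PX^1(\mcE),\qquad g,h\in\germe_X,\ m\in\mcE,
\]
which is just $d^1(hm)=h\cdot d^1(m)+\iota(dh\otimes m)$ (in the canonical $\germe_X$-structure of $\PX^1$), multiplied by $g$. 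So I would first take $\xi\in\PX^1(\mcE)$, use the short exact sequence of Equation~(\ref{Eq: short exact sequence of the sheaf of first jets}) to write $\xi=1\otimes s-\sum_i\omega_i\otimes s_i$ with $s=\pi(\xi)$, then choose local expressions $\omega_i=\sum_k g_{ik}\,dh_{ik}$ and apply the identity above to rewrite $\xi=\sum_j f_j\otimes t_j$ with all $f_j\in\germe_X$.

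For Equation~(\ref{Eq: description of the first sheaf of transverse jets}) I would substitute this into the membership criterion $\sum_j f_j\otimes\nabla(t_j)=0$ of Equation~(\ref{Eq: definition of the sheaf of transverse jets 4}); for $k=1$ this lives in $\PX^{0}(\CTF^1(\mcE))=\CTF^1(\mcE)$, so only scalar coefficients occur. Expanding $\nabla(h_{ik}s_i)$ by the Leibniz rule, the $h_{ik}\nabla(s_i)$ contribution cancels against the $g_{ik}h_{ik}\otimes s_i$ terms, and since $\sum_k g_{ik}\restr{dh_{ik}}{T_{\F}}=\restr{\omega_i}{T_{\F}}$ what remains is precisely $\nabla(s)=\sum_i\restr{\omega_i}{T_{\F}}\otimes s_i$. (Alternatively, one notes that $\PXF^1(\nabla)$ is the kernel of $\phi_\nabla\colon\PX^1(\mcE)\to\CTF^1(\mcE)$ and that $\phi_\nabla\circ\iota$ is the restriction of $1$-forms by Proposition~\ref{P: partial connection}, whence $\phi_\nabla(\xi)=\nabla(s)-\sum_i\restr{\omega_i}{T_{\F}}\otimes s_i$.)

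For Equation~(\ref{Eq: description of the partial connection on the first sheaf of transverse jets}) I would feed the same $\sum_j f_j\otimes t_j$ into the formula $\nabla^1_v(\sum_j f_j\otimes t_j)=\sum_j\big(v(f_j)\otimes t_j+f_j\otimes\nabla_v(t_j)\big)$ of Proposition~\ref{P: description of the partial connection on the sheaf of transverse jets}, keeping in mind that this right-hand side is additive in the data $(f_j,t_j)$, so it may be evaluated term by term even though a single term such as $d^1(s)$ need not itself lie in $\PXF^1(\nabla)$. Using the Leibniz rule for $\nabla_v$, the product rule for $v$ on functions, and the compatibility $\Lie_v\circ d=d\circ\Lie_v$ — so that $\Lie_v(g\,dh)=v(g)\,dh+g\,d(v(h))$ and hence $\sum_k\Lie_v(g_{ik}\,dh_{ik})=\Lie_v(\omega_i)$ — and then recollecting with the same identity $\iota(g\,dh\otimes m)=g\otimes(hm)-gh\otimes m$ read in reverse, all scalar-coefficient jets cancel in pairs and one is left with $1\otimes\nabla_v(s)-\sum_i\big(\omega_i\otimes\nabla_v(s_i)+\Lie_v(\omega_i)\otimes s_i\big)$.

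The argument is purely computational; the only point that requires attention is bookkeeping the two $\germe_X$-module structures on $\PX^1$, i.e.\ consistently applying $d^1(hm)=h\cdot d^1(m)+\iota(dh\otimes m)$ whenever passing between the two presentations of a jet, and remembering that the formula of Proposition~\ref{P: description of the partial connection on the sheaf of transverse jets}, valid on $\PXF^1(\nabla)$ as an identity of jets, is in any case an additive function of its input data. I do not expect any genuine obstacle beyond this.
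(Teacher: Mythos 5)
Your proposal is correct and follows essentially the same route as the paper: the first display is obtained from the identification of $\PXF^1(\nabla)$ with $\ker\phi_\nabla$ together with the fact that $\phi_\nabla\circ\iota$ is the restriction of $1$-forms (your parenthetical alternative is literally the paper's argument, and your primary computation via Equation~(\ref{Eq: definition of the sheaf of transverse jets 4}) is just the same fact unwound by hand), and the second display is derived exactly as in the paper by writing $\omega_i=f_i\,dg_i$, rewriting $\omega_i\otimes s_i=f_i\otimes g_is_i-f_ig_i\otimes s_i$, and applying Proposition~\ref{P: description of the partial connection on the sheaf of transverse jets}. The bookkeeping you describe (Leibniz rule, $\Lie_v\circ d=d\circ\Lie_v$, and re-collecting via the same identity) checks out.
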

\begin{proof}
    Given 
    \[
        \xi = 1\otimes s - \sum \omega_i \otimes s_i \in \PX^1(\mcE),
    \]
    we have by Proposition \ref{P: partial connection} that
    \[
    \phi_{\nabla}(\xi) = \nabla(s) - \sum \restr{\omega_i}{T_{\F}} \otimes s_i,
    \]
    Hence, from Equation (\ref{Eq: definition of the sheaf of transverse jets 3}), we have the 
    \[
        \xi \in \PXF^1(\nabla) \iff \nabla(s) = \sum \restr{\omega_i}{T_{\F}} \otimes s_i,
    \]
    which is exactly the claim of Equation (\ref{Eq: description of the first sheaf of transverse jets}).

    To compute $\nabla_v(\xi)$ from this expression, note that there is no loss of generalization in supposing that $\omega_i  = f_idg_i$ with $f_i,g_i \in \germe_X$. By rewriting $\omega_i \otimes s_i$ as $f_i\otimes g_is_i - f_ig_i \otimes s_i$, Equation (\ref{Eq: description of the partial connection on the first sheaf of transverse jets}) then follows from a straightforward calculation using Proposition \ref{P: description of the partial connection on the sheaf of transverse jets}.
\end{proof}

\begin{cor}\label{Cor: image in the short exact sequence of transverse jets} Let $\F$ be a foliation on a smooth variety $X$, and let $(\mcE,\nabla)$ be a flat partial connection on a reflexive sheaf. Let $\pi: \PXF^1(\nabla) \rightarrow \mcE$ be the natural projection as in Equation (\ref{Eq: short exact sequence of the first sheaf of transverse jets}). Then,
    \[
        \mathrm{im}(\pi) = \left\{ s \in \mcE \mid  \exists \omega_i \in \Omega_X^1, s_i \in \mcE \text{ such that } \nabla(s) = \sum \restr{\omega_i}{T_{\F}} \otimes s_i \right\}.
    \]
\end{cor}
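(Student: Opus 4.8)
The plan is to deduce the statement almost immediately from the explicit description of $\PXF^1(\nabla)$ as a subsheaf of $\PX^1(\mcE)$ recorded in Equation (\ref{Eq: description of the first sheaf of transverse jets}). First I would recall that the morphism $\pi$ appearing in the short exact sequence (\ref{Eq: short exact sequence of the first sheaf of transverse jets}) is, by its very construction, the restriction to $\PXF^1(\nabla)\subset \PX^1(\mcE)$ of the canonical projection $\pi\colon \PX^1(\mcE)\to\mcE$ from the short exact sequence associated to the sheaf of first jets; on local sections written in the normal form $\xi = 1\otimes s + \sum \omega_i\otimes s_i$ this map is simply $\xi \mapsto s$.

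Next I would invoke Equation (\ref{Eq: description of the first sheaf of transverse jets}): a local section of $\PX^1(\mcE)$ lies in $\PXF^1(\nabla)$ precisely when it can be written as $\xi = 1\otimes s - \sum \omega_i\otimes s_i$ with $\nabla(s) = \sum \restr{\omega_i}{T_{\F}}\otimes s_i$. Combining the two observations, a local section $s$ of $\mcE$ is in $\mathrm{im}(\pi)$ if and only if it lifts to some $\xi\in \PXF^1(\nabla)$, which by the displayed characterization happens if and only if there exist $\omega_i\in\Omega_X^1$ and $s_i\in\mcE$ with $\nabla(s)=\sum\restr{\omega_i}{T_{\F}}\otimes s_i$. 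That is exactly the right-hand side of the claimed equality, so the two subsheaves of $\mcE$ coincide.

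I do not anticipate any genuine obstacle here, since the argument is a direct translation between the two equivalent descriptions of $\PXF^1(\nabla)$. The only point worth making explicit is that the identity may be verified on local sections (equivalently, stalk-wise), both $\mathrm{im}(\pi)$ and the set on the right being understood as subsheaves of $\mcE$; no further hypotheses on $\F$ or $\mcE$ beyond reflexivity — already in force — are needed.
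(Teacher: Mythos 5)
Your argument is correct and is precisely the route the paper takes: the corollary is stated without proof exactly because it follows immediately from Equation (\ref{Eq: description of the first sheaf of transverse jets}) together with the fact that $\pi$ is the restriction of the first-jet projection $1\otimes s - \sum \omega_i\otimes s_i \mapsto s$. Your closing remark that the equality of subsheaves is checked on local sections is the right level of care; nothing is missing.
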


\subsection{The first transverse jet of the Bott connection}\label{Subsection: persistent singularities}

Let $\F$ be a foliation on a smooth complex variety $X$. Let us specify the setting of the preceding section to the first sheaf of the transverse jets of the Bott connection. Although we focus here on the Bott connection on the conormal sheaf, the discussion also applies to the Bott connection on the normal sheaf.

Recall that the \emph{ideal of persistent singularities of $\F$} is defined as
\[
    \idealpers(\F) = \Big\{f\in \germe_X \mid \forall \omega \in \CNF, \exists \eta_i\in \Omega_X^1, \omega_i \in \CNF ; f\cdot d\omega = \sum \eta_i \wedge \omega_i \Big\} \subset \germe_X.
\]
If $\idealpers(\F)\neq \germe_X$, the subscheme defined by $\idealpers(\F)$ is called the \emph{subscheme of persistent singularities} of $\F$. Otherwise, we say that $\F$ is a foliation \emph{without persistent singularities}. See \cite[Section 4]{massri-molinuevo-quallbrunn-21-zbMATH07317130} for this nomenclature and for a more detailed discussion. The ideal of persistent singularities is related to the first sheaf of transverse jets of the Bott connection by the following proposition.

\begin{prop}\label{P: ideal of persistent singularities}  Let $\F$ be a foliation on a smooth complex variety $X$. Let $(\CNF,\nabla_B)$ be the Bott connection on the conormal sheaf of $\F$, and let $\pi: \PXF^1(\nabla_B) \rightarrow \CNF$ the natural projection appearing in the short exact sequence associated to $\PXF^1(\nabla_B)$. Then,
\[
\idealpers(\F) \otimes \CNF \subset \rm{im}(\pi: \PXF^1(\nabla_B) \rightarrow \CNF) \subset \CNF.
\]
\end{prop}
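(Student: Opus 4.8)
The statement asks to identify a specific subsheaf of $\CNF$—namely $\idealpers(\F)\otimes\CNF$—inside the image of the projection $\pi\colon\PXF^1(\nabla_B)\to\CNF$. The plan is to use the explicit description of $\mathrm{im}(\pi)$ furnished by Corollary~\ref{Cor: image in the short exact sequence of transverse jets}, specialized to $(\mcE,\nabla)=(\CNF,\nabla_B)$, and match it termwise against the definition of $\idealpers(\F)$. Concretely, by Corollary~\ref{Cor: image in the short exact sequence of transverse jets},
\[
    \mathrm{im}(\pi) = \left\{ \omega \in \CNF \mid \exists\, \alpha_i\in\Omega_X^1,\ \omega_i\in\CNF\ \text{such that}\ \nabla_B(\omega)=\sum \restr{\alpha_i}{T_{\F}}\otimes\omega_i \right\}.
\]
So the whole proof reduces to translating the condition ``$\nabla_B(\omega)=\sum\restr{\alpha_i}{T_{\F}}\otimes\omega_i$'' for the Bott connection into the exterior-derivative condition ``$d\omega=\sum\alpha_i\wedge\omega_i$'' that appears in $\idealpers(\F)$.

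First I would recall that the Bott connection on $\CNF$ is given by $(\nabla_B)_v(\omega)=\mc{L}_v(\omega)=\iota_v(d\omega)+d(\iota_v\omega)=\iota_v(d\omega)$ for $v\in T_{\F}$ and $\omega\in\CNF$, where the last equality uses $\iota_v\omega=0$ (this is the defining property of $\CNF$). Next, I would observe that for a $2$-form $\Theta$ on $X$, the foliated $1$-form $v\in T_{\F}\mapsto\iota_v\Theta\in\Omega_X^1$ takes values in $\CNF$ precisely when $\Theta\in\CNF\wedge\Omega_X^1$, i.e. when $\Theta=\sum\alpha_i\wedge\omega_i$ with $\omega_i\in\CNF$; more precisely, one should track that $\iota_v(\alpha_i\wedge\omega_i)=\alpha_i(v)\,\omega_i-\omega_i(v)\,\alpha_i=\restr{\alpha_i}{T_{\F}}(v)\cdot\omega_i$ when $\omega_i\in\CNF$. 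Hence for $\omega\in\CNF$, having $\nabla_B(\omega)=\sum\restr{\alpha_i}{T_{\F}}\otimes\omega_i$ as an element of $\CTF^1(\CNF)$ is equivalent to $d\omega-\sum\alpha_i\wedge\omega_i$ being annihilated by $\iota_v$ for all $v\in T_{\F}$, i.e. to $d\omega-\sum\alpha_i\wedge\omega_i\in\CNF\wedge\Omega_X^1$, after which one absorbs this extra term into the sum (it is already of the required shape). Putting $\omega=f\cdot\theta$ for $f\in\idealpers(\F)$ and $\theta\in\CNF$, and using $\nabla_B(f\theta)=df\otimes\theta+f\cdot\nabla_B(\theta)$ together with the defining relation $f\cdot d\theta=\sum\eta_i\wedge\theta_i$, one exhibits the needed forms $\alpha_i,\omega_i$ realizing $f\theta\in\mathrm{im}(\pi)$; since $\mathrm{im}(\pi)$ is an $\germe_X$-submodule and these generate $\idealpers(\F)\otimes\CNF$, the inclusion follows.

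The main obstacle I anticipate is purely bookkeeping: the condition defining $\mathrm{im}(\pi)$ lives in $\CTF^1(\CNF)$, i.e. it is a condition on foliated $1$-forms with values in $\CNF$, whereas $\idealpers(\F)$ is phrased with honest $2$-forms and honest $1$-forms on $X$; the passage between the two requires (i) being careful that contraction $\iota_v$ with $v\in T_{\F}$ does not see the ambiguity in writing $\nabla_B(\omega)$ as a sum $\sum\restr{\alpha_i}{T_{\F}}\otimes\omega_i$ rather than $\sum\restr{\alpha_i}{T_{\F}}\otimes\omega_i$ with globally defined $\alpha_i$, and (ii) checking that when we start from the $\idealpers(\F)$ side, the term $f\cdot d\theta$ genuinely produces, after contraction, the foliated form $(\nabla_B)(f\theta)$ minus $df\otimes\theta$, with the remaining piece already of type $\CNF\wedge\Omega_X^1$ so that it can be folded into the sum. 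None of this is deep, but one must handle the local ambiguity of the $\alpha_i$ and the distinction between $\Omega_X^1$ and $\CTF^1$ carefully to get an honest chain of equivalences rather than merely a chain of implications. Finally I would note that the same argument applies verbatim to the Bott connection on $N_{\F}$, as remarked before the proposition, though only the $\CNF$ statement is claimed.
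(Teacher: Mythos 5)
Your proposal is correct and follows essentially the same route as the paper: contract the defining relation $f\cdot d\omega=\sum\eta_i\wedge\omega_i$ with $v\in T_{\F}$, use $\iota_v(\omega_i)=0$ to obtain $f\cdot(\nabla_B)_v(\omega)=\sum\eta_i(v)\cdot\omega_i$, and conclude from the explicit description of $\PXF^1(\nabla_B)$ --- the paper exhibits the element $f\otimes\omega-\sum\eta_i\otimes\omega_i\in\PXF^1(\nabla_B)$ directly, while you invoke Corollary~\ref{Cor: image in the short exact sequence of transverse jets}, which is the same description of $\mathrm{im}(\pi)$. The equivalences you worry about at the end are only needed for the reverse inclusion (Proposition~\ref{P: codimension one, ideal of persistent singularities}, which requires codimension one); for the present statement a one-way implication suffices and your computation supplies it.
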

\begin{proof}
    For every $f\in \idealpers(\F)$ and $\omega \in \CNF$, by definition there exists a collection $\eta_i\in \Omega_X^1, \omega_i \in \CNF$ such that
    \[
    f \cdot d\omega = \sum \eta_i \wedge \omega_i,
    \]
    which, by the definition of the Bott connection, implies that
    \[
    f \cdot \nabla_v(\omega) =  f \cdot \Lie_v(\omega) = \iota_v\left(\sum \eta_i \wedge \omega_i\right) = \sum  \iota_v(\eta_i) \cdot \omega_i, \forall v\in T_{\F}.
    \]
    Hence, $f\cdot \nabla(\omega) = \sum \restr{\eta_i}{T_{\F}} \otimes \omega_i \in \CTF^1\otimes \CNF$, and by Proposition \ref{Eq: description of the first sheaf of transverse jets}, it follows that
    \[
    -\sum \eta_i \otimes \omega_i + f \otimes \omega \in \PXF^1(\nabla_B). 
    \]
    Therefore, $f\cdot \omega \in \rm{im}(\pi: \PXF^1(\nabla_B) \rightarrow \CNF)$. This concludes the proof.
\end{proof}

For codimension one foliations, we have indeed the following.

\begin{prop}\label{P: codimension one, ideal of persistent singularities} Let $\F$ be a codimension one foliation on a smooth complex variety $X$. Let $(\CNF, \nabla_B)$ be the Bott connection on the conormal sheaf of $\F$. Then,
\[
 \rm{im}(\pi: \PXF^1(\nabla_B) \rightarrow \CNF) = \idealpers(\F) \otimes \CNF.
\]
\end{prop}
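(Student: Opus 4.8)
The plan is to show the reverse inclusion to the one already established in Proposition \ref{P: ideal of persistent singularities}; that is, to prove that if $s \in \CNF$ lies in $\mathrm{im}(\pi: \PXF^1(\nabla_B) \rightarrow \CNF)$, then $s \in \idealpers(\F) \otimes \CNF$. Since $\F$ is codimension one, $\CNF$ is a rank one reflexive sheaf, hence a line bundle away from a codimension $\ge 2$ set, and locally we can write any section of $\CNF$ as $f \cdot \omega$ for a fixed local generator $\omega$ of $\CNF$ and some $f \in \germe_X$; the statement to prove then becomes exactly that $f \in \idealpers(\F)$. I would work locally, with $\omega$ a local $1$-form generating $\CNF$ (so $\CNF = \germe_X \cdot \omega$ locally, using that $X$ is smooth and the singular set of $\CNF$ has codimension $\ge 2$, together with reflexivity of $\CNF$ and $\PXF^1(\nabla_B)$ to reduce to this open set).

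First I would unwind what membership in the image means via Corollary \ref{Cor: image in the short exact sequence of transverse jets}: $s = f\omega \in \mathrm{im}(\pi)$ exactly when there exist $\eta_i \in \Omega_X^1$ and $\omega_i \in \CNF$ with $\nabla_B(f\omega) = \sum \restr{\eta_i}{T_\F} \otimes \omega_i$ as foliated $1$-forms with values in $\CNF$. Using the Leibniz rule and the definition of the Bott connection, $\nabla_B(f\omega) = df|_{T_\F} \otimes \omega + f \cdot \nabla_B(\omega)$, and since for codimension one foliations $d\omega = \theta \wedge \omega$ for a local $1$-form $\theta$ (as $\omega$ defines the foliation and $\CNF$ has rank one), we get $\nabla_B(\omega) = \mathcal L_v(\omega)\big|_v = \iota_v(d\omega) + d(\iota_v\omega) = \iota_v(\theta\wedge\omega)$ for $v \in T_\F$, i.e.\ $\nabla_B(\omega) = \restr{\theta}{T_\F}\otimes\omega$ (the term $d(\iota_v\omega)$ vanishes since $\omega \in \CNF$). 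So the condition $f\omega\in\mathrm{im}(\pi)$ reads: there exist $\eta_i,\omega_i$ with $\big(df + f\theta\big)\big|_{T_\F}\otimes\omega = \sum \restr{\eta_i}{T_\F}\otimes\omega_i$ in $\CTF^1(\CNF)$.

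Next I would translate this back into the ambient sheaf of $1$-forms. Writing each $\omega_i = h_i\omega$ for $h_i \in \germe_X$ (again using rank one), the right-hand side is $\big(\sum h_i\eta_i\big)\big|_{T_\F}\otimes\omega$, so the condition becomes $\big(df + f\theta - \sum h_i\eta_i\big)\big|_{T_\F} = 0$, i.e.\ $df + f\theta - \sum h_i\eta_i \in \CNF$, say $df + f\theta - \sum h_i\eta_i = g\omega$ for some $g \in \germe_X$. Wedging with $\omega$ and using $df\wedge\omega = d(f\omega) - f\,d\omega = d(f\omega) - f\theta\wedge\omega$, hence $df\wedge\omega + f\theta\wedge\omega = d(f\omega)$, I obtain $d(f\omega) = \sum h_i\,\eta_i\wedge\omega$. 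Since $f\omega \in \CNF$, this exhibits $f$ as an element of $\idealpers(\F)$ (testing against the generator $\omega$ of $\CNF$; for a general $\omega' = u\omega \in \CNF$ one has $d(fu\omega) = d((fu)\omega)$ and $fu$ is again in the ideal by the same computation, so the defining condition of $\idealpers(\F)$ is met). Conversely the inclusion $\idealpers(\F)\otimes\CNF \subset \mathrm{im}(\pi)$ is Proposition \ref{P: ideal of persistent singularities}, giving equality.

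The main obstacle I anticipate is the careful bookkeeping of the two $\germe_X$-module structures (canonical vs.\ right) on $\PX^1$ and the reduction to the locus where $\CNF$ is locally free: one must check that the identity $\mathrm{im}(\pi) = \idealpers(\F)\otimes\CNF$, verified on the big open set where both sheaves are locally free, propagates to all of $X$. This follows because $\CNF$ is reflexive, $\PXF^1(\nabla_B)\subset\PX^1(\CNF)$ with $\PX^1(\CNF)$ reflexive by Proposition \ref{P: jet of reflexive sheaf}, and $\idealpers(\F)$ is defined by a condition that is insensitive to codimension $\ge 2$ modifications; hence both sides agree as subsheaves of $\CNF$ once they agree generically. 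The genuinely computational heart — that $d(f\omega) = \sum \eta_i \wedge \omega_i$ with $\omega_i \in \CNF$ is equivalent to $f \cdot \nabla_B(\omega) = \sum \restr{\eta_i}{T_\F}\otimes\omega_i$ — is exactly the codimension one specialization of the computation already carried out in the proof of Proposition \ref{P: ideal of persistent singularities}, now run in both directions using that $\omega$ generates $\CNF$ locally.
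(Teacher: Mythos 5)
There is a genuine gap, and it sits at the computational heart of your argument. You assert that ``for codimension one foliations $d\omega = \theta\wedge\omega$ for a local $1$-form $\theta$'' and use this both to compute $\nabla_B(\omega) = \restr{\theta}{T_{\F}}\otimes\omega$ and in the final wedge computation. This division is false at singular points in general: integrability only gives $\omega\wedge d\omega = 0$, and the de Rham--Saito division producing a holomorphic $\theta$ requires the singular set of $\omega$ to have codimension at least $3$. For the radial singularity $\omega = x\,dy - y\,dx$ (central to Section \ref{S: Codimension one foliation on the projective plane with certain singularities} of the paper) one has $d\omega = 2\,dx\wedge dy$, while $\theta\wedge\omega = (ax+by)\,dx\wedge dy$ can never equal it at the origin. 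Worse, the existence of such a $\theta$ is precisely the statement $1\in\idealpers(\F)$, i.e.\ that $\F$ has no persistent singularities; so your argument assumes the conclusion in the only situation where the proposition has content. The fallback you offer --- verifying the identity on a big open set and claiming that ``both sides agree as subsheaves of $\CNF$ once they agree generically'' --- is also false: two subsheaves of a coherent sheaf can agree on a dense open set and still differ in codimension $\ge 2$ (this is exactly the relation between $\idealpers(\F)$ and $\germe_X$), so nothing propagates from the generic locus.

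The missing idea, which is the paper's key step, uses codimension one differently: since $\bigwedge^2(T_X/T_{\F}) = 0$, the contraction map $\iota_{\bullet}:\Omega_X^2\to\mcHom(T_{\F},\Omega_X^1)$, $\Theta\mapsto(v\mapsto \iota_v\Theta)$, is \emph{injective}. Given $f\cdot\omega\in\mathrm{im}(\pi)$, Corollary \ref{Cor: image in the short exact sequence of transverse jets}, the Leibniz rule and the rank-one reduction $\omega_i = h_i\omega$ (exactly the bookkeeping you carry out) produce a single $\eta\in\Omega_X^1$ with $f\cdot\iota_v(d\omega) = \eta(v)\cdot\omega = \iota_v(\eta\wedge\omega)$ for all $v\in T_{\F}$; injectivity of $\iota_{\bullet}$ then upgrades this identity of foliated $1$-forms to the identity of genuine $2$-forms $f\cdot d\omega = \eta\wedge\omega$, which is the defining condition of $\idealpers(\F)$. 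No division of $d\omega$ by $\omega$ is needed or available. The surrounding reductions in your write-up (local generator of $\CNF$, collecting the $\eta_i$, checking the condition for every $\omega' = u\omega$) are fine and match the paper; it is only this central step that must be replaced.
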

\begin{proof}
    Since the inclusion $\idealpers(\F) \otimes \CNF\subset \rm{im}(\pi)$ was stablished in Proposition \ref{P: ideal of persistent singularities}, it remains to verify the reverse inclusion. 

    Starting with the short exact sequence induced by the inclusion $T_{\F}\subset T_X$, and applying the functor $\bigwedge^2$, to it, we obtain by \cite[Proposition A2.2]{eisenbud-zbMATH00704831} the short exact sequence
    \[
        T_{\F} \otimes T_X \rightarrow \bigwedge^2 T_X \rightarrow \bigwedge^2 (T_X/T_{\F}) \rightarrow 0. 
    \]
    Dualizing, it induces the short exact sequence
    \[
        \left(\bigwedge^2 (T_X/T_{\F}) \right)^* \rightarrow \Omega_X^2\rightarrow \mcHom(T_{\F}\otimes T_X,\germe_X),
    \]
    By adjunction, we have a natural isomorphism $\mcHom(T_{\F}\otimes T_X,\germe_X)\rightarrow \mcHom(T_{\F}, \Omega_X^1)$, and under this isomorphism, the morphism $\Omega_X^2\rightarrow \mcHom(T_{\F}\otimes T_X,\germe_X)$ turns out to be 
    \begin{equation*}%\label{Eq: codimension one foliation, evaluation of 2-forms}
        \begin{split}
            \iota_{\bullet}: \Omega_X^2 & \rightarrow \mcHom(T_{\F}, \Omega_X^1) \\
            \omega & \mapsto (v\in T_{\F} \mapsto i_v(\omega)).
        \end{split}
    \end{equation*}
    Moreover, since $\F$ has codimension one, we have $\left(\bigwedge^2 (T_X/T_{\F}) \right)^* \simeq 0$. Therefore, in this case, the natural morphism $\iota_{\bullet}$ is injective. Let us make use of this injectivity to conclude the proof.
    
    Let $\omega\in \CNF$ be a local generator of $\CNF$, and let $f\in \germe_X$ such that $f\cdot \omega \in \rm{im}(\pi)$. By Corollary \ref{Cor: image in the short exact sequence of transverse jets}, there exists $\eta \in \Omega_X^1$ such that
    \[
    f \cdot \nabla_B(\omega) = \restr{\eta}{T_{\F}} \otimes \omega \in \mcHom(T_{\F},\CNF).
    \]
    Since $(\nabla_B)_v(\omega) = \Lie_v(\omega) = \iota_v(d\omega)$ by the definition of the Bott connection, we have that
    \[
    \iota_{\bullet}(f \cdot d\omega) = \iota_{\bullet}(\eta \wedge \omega).
    \]
    Because $\iota_{\bullet}$ is injective, it follows that $f\cdot \omega = \eta \wedge \omega$, that is, $f\cdot \omega \in \idealpers(\F) \otimes \CNF$. Therefore, 
    \[
    \rm{im}(\pi) \subset \idealpers(\F) \otimes \CNF,
    \]
    and this concludes the proof of the proposition.
    \end{proof}

\subsection{Extensions}

\begin{prop}\label{P: extensions and splittings}
    Let $\F$ be a foliation on a smooth complex variety $X$. Let $(\mcE,\nabla)$ be a flat partial connection on a reflexive sheaf. Let $\hatnabla: \mcE \rightarrow \OmegaDE$ be a meromorphic connection. Then, $\hatnabla$ extends $\nabla$ if, and only if, the corresponding meromorphic splitting
    \[
        \sigma: \mcE(-D) \rightarrow \PX^1(\mcE) 
    \]
    induced by Equation (\ref{Eq: right meromorphic splitting 2})
    factors through the inclusion $\PXF^1(\nabla) \subset \PX^1(\mcE)$.
\end{prop}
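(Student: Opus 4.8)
The plan is to identify $\PXF^1(\nabla)$ inside $\PX^1(\mcE)$ as a kernel and then rewrite the factorization condition as the vanishing of an explicit $\germe_X$-linear morphism built out of $\nabla$ and $\hatnabla$. Since $\mcE$ is reflexive, Equation~(\ref{Eq: definition of the sheaf of transverse jets 3}) with $k=1$ exhibits $\PXF^1(\nabla)$ as the kernel of the $\germe_X$-linear morphism $\phi_{\nabla}\colon\PX^1(\mcE)\to\CTF^1(\mcE)$ attached to $\nabla$ (so $\nabla=\phi_{\nabla}\circ d^1_{X,\mcE}$). Hence the right meromorphic splitting $\sigma_{\hatnabla}\colon\mcE(-D)\to\PX^1(\mcE)$ of Equation~(\ref{Eq: right meromorphic splitting 2}) factors through the inclusion $\PXF^1(\nabla)\subset\PX^1(\mcE)$ if and only if $\phi_{\nabla}\circ\sigma_{\hatnabla}=0$.

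The next step is to compute this composition. Working with the presentation $\sigma_{\hatnabla}=d^1_{X,\mcE}-\hatnabla\colon\mcE\to\germe_X(D)\otimes\PX^1(\mcE)$ from Equation~(\ref{Eq: right meromorphic splitting 1}) (before twisting down to $\mcE(-D)$) and extending $\phi_{\nabla}$ meromorphically to $\germe_X(D)\otimes\PX^1(\mcE)\to\CTF^1(\mcE(D))$, the identity $\phi_{\nabla}\circ d^1_{X,\mcE}=\nabla$ together with Proposition~\ref{P: partial connection} — which tells us that $\phi_{\nabla}$ composed with the inclusion $\Omega_X^1(\mcE)\hookrightarrow\PX^1(\mcE)$ is the restriction morphism $\mrrestr$ — gives
\[
\phi_{\nabla}\circ\sigma_{\hatnabla}\;=\;\nabla-\mrrestr\circ\hatnabla\colon\mcE\longrightarrow\CTF^1(\mcE(D)).
\]
Twisting back by $\germe_X(-D)$, the condition $\phi_{\nabla}\circ\sigma_{\hatnabla}=0$ then says exactly that the $\germe_X$-linear morphism $\psi:=\nabla-\mrrestr\circ\hatnabla$ vanishes on the subsheaf $\mcE(-D)\subset\mcE$.

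Finally I would promote this to the vanishing of $\psi$ on all of $\mcE$. The target $\CTF^1(\mcE(D))=\mcHom(T_{\F},\mcE(D))$ is torsion-free because $\mcE(D)$ is; locally $\mcE(-D)=g\cdot\mcE$ for a local equation $g$ of $D$, so $\psi|_{\mcE(-D)}=0$ forces $g\cdot\psi(e)=0$ and hence $\psi(e)=0$ for every local section $e$ of $\mcE$. Thus $\psi=0$, i.e.\ $\mrrestr\circ\hatnabla=\nabla$; and since $\nabla$ has values in $\CTF^1(\mcE)\subset\CTF^1(\mcE(D))$, this is precisely the assertion that $\mrrestr\circ\hatnabla$ factors through $\CTF^1(\mcE)$ with induced partial connection $\nabla$ — that is, that $\hatnabla$ extends $\nabla$ in the sense of Diagram~(\ref{D: restriction of connections}). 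Both implications of the proposition come out of this chain of equivalences at once.

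The step I expect to demand the most care is the bookkeeping with the two $\germe_X$-module structures on $\PX^1(\mcE)$ and with the twists by $\germe_X(\pm D)$ when passing between the presentation of $\sigma_{\hatnabla}$ valued in $\germe_X(D)\otimes\PX^1(\mcE)$ and the honest splitting $\mcE(-D)\to\PX^1(\mcE)$; in particular one must check that the meromorphic extension of $\phi_{\nabla}$ is compatible with the canonical identification $\germe_X(D)\otimes\CTF^1(\mcE)\simeq\CTF^1(\mcE(D))$. This can be avoided by arguing purely locally instead: by Equation~(\ref{Eq: right meromorphic splitting 3}) one has $\sigma_{\hatnabla}(s)=1\otimes s-\sum\omega_i\otimes s_i$ whenever $\hatnabla(s)=\sum\omega_i\otimes s_i$ with $s\in\mcE(-D)$ (the $\omega_i$ being holomorphic), and by Equation~(\ref{Eq: description of the first sheaf of transverse jets}) this element lies in $\PXF^1(\nabla)$ exactly when $\nabla(s)=\sum\restr{\omega_i}{T_{\F}}\otimes s_i=\mrrestr(\hatnabla(s))$; so $\sigma_{\hatnabla}$ carries $\mcE(-D)$ into $\PXF^1(\nabla)$ iff $\nabla$ and $\mrrestr\circ\hatnabla$ agree on $\mcE(-D)$, and applying the Leibniz rule for both connections to $g\cdot s$ with $g$ a local equation of $D$, together with the torsion-freeness of $\CTF^1(\mcE(D))$, upgrades this to agreement on all of $\mcE$, i.e.\ to $\hatnabla$ extending $\nabla$.
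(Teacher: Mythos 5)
Your proposal is correct and follows essentially the same route as the paper's proof: identify $\PXF^1(\nabla)$ as $\ker\phi_{\nabla}$ via Equation~(\ref{Eq: definition of the sheaf of transverse jets 3}), compute $\phi_{\nabla}\circ\sigma_{\hatnabla}=\nabla-\mrrestr\circ\hatnabla$ on $\mcE(-D)$, and upgrade the vanishing from $\mcE(-D)$ to $\mcE$ using the Leibniz rule together with torsion-freeness of the target. Your explicit observation that $\nabla-\mrrestr\circ\hatnabla$ is $\germe_X$-linear is exactly the content of the paper's closing sentence, just spelled out.
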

\begin{proof}
    By definition, $\sigma$ factors through the inclusion $\PXF^1(\nabla) \subset \PX^1(\mcE)$ if and only if the composition $\phi_{\nabla} \circ \sigma: \mcE(-D) \rightarrow \CTF^1(\mcE)$ is vanishing. Let us consider the meromorphic connection $\hatnabla$ as a $\C$-linear morphism $\hatnabla: \mcE(-D) \rightarrow \Omega_X^1(\mcE)$. For every $s\in \mcE(-D)$, we have that
    \[
    \phi_{\nabla} \circ \sigma(s) = \phi_{\nabla} \circ (d^1_{X,\mcE} - \hatnabla)(s) = \nabla(s) - \rm{restr} \circ \hatnabla(s),
    \]
    where recall that $\mathrm{restr}: \Omega_X^1(\mcE) \rightarrow \CTF^1(\mcE)$ is the morphism induced by the inclusion $T_{\F} \rightarrow T_X$.
    Thus, $\phi_{\nabla} \circ \sigma \equiv 0$ if and only if we have the commutative diagram:
    \begin{equation*}
        \begin{tikzpicture}
            \matrix(m)[matrix of math nodes, column sep = 2em, row sep = 2em]
            {
            \mcE(-D) & \Omega_X^1(\mcE) \\
            \mcE & \CTF(\mcE) \\
            };
            \path[->]
            (m-1-1) edge node[above]{$\hatnabla$} (m-1-2) edge node[left]{$\iota$} (m-2-1)
            (m-1-2) edge node[left]{$\mathrm{restr}$} (m-2-2)
            (m-2-1) edge node[above]{$\nabla$} (m-2-2)
            ;
        \end{tikzpicture}
    \end{equation*}
    Since $\CTF^1(\mcE)$ is torsion-free, by using the Leibniz rule we can conclude that $\nabla(s) = \rm{restr} \circ \hatnabla(s)$ for all $s\in \mcE$, that is, $\hatnabla$ extends $\nabla$.
\end{proof}

As a consequence of Proposition \ref{P: extensions and splittings}, each meromorphic extensions $\hatnabla$ of a flat partial connection on a reflexive sheaf $(\mcE,\nabla)$ induces a meromorphic splitting
\[
\sigma_{\hatnabla}: \mcE(-D) \rightarrow \PXF^1(\nabla)
\]
of the natural short exact sequence associated to $\PXF^1(\nabla)$ (see Equation (\ref{Eq: short exact sequence of the first sheaf of transverse jets})).

\begin{thm}\label{T: meromorphic extensions} Let $\F$ be a foliation on a complex smooth variety $X$. Let $(\mcE,\nabla)$ be a flat partial connection on a reflexive sheaf. Let $D\ge 0$ be a divisor on $X$. Then, there exists a natural correspondence between:
\begin{enumerate}[label = (\alph*)]
    \item\label{Item: meromorphic extension} meromorphic connections $\hatnabla: \mcE \rightarrow \OmegaDE$ extending $\nabla$; and
    \item\label{Item: meromorphic splitting} meromorphic splittings with poles on $D$ of the short exact sequence associated to the first sheaf of transverse jets of $\nabla$;
\end{enumerate}
    where the correspondence is given by taking a meromorphic connection $\hatnabla$ extending $\nabla$ to the meromorphic splitting $ \sigma_{\hatnabla}: \mcE(-D) \rightarrow  \PXF^1(\nabla)$ given by Proposition \ref{P: extensions and splittings}.
\end{thm}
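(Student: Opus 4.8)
The plan is to obtain Theorem~\ref{T: meromorphic extensions} by combining Proposition~\ref{P: extensions and splittings} with the bijection established in Subsection~\ref{Subsection: the first transverse jet} (see Equations~(\ref{Eq: right meromorphic splitting 1})--(\ref{Eq: right meromorphic splitting 3})) between meromorphic connections $\hatnabla\colon\mcE\to\OmegaDE$ and right meromorphic splittings with poles on $D$ of the short exact sequence of Equation~(\ref{Eq: short exact sequence of the sheaf of first jets}) associated to $\PX^1(\mcE)$; recall that this bijection sends $\hatnabla$ to the splitting $\sigma_{\hatnabla}\colon\mcE(-D)\to\PX^1(\mcE)$ induced by $d^1_{X,\mcE}-\hatnabla$, and is inverted by the analogous formula. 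In these terms no new computation is required: the asserted correspondence is simply the restriction of this bijection along the two sides of Proposition~\ref{P: extensions and splittings}.

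First I would record two preliminary observations. Since $\mcE$ is reflexive, by construction $\PXF^1(\nabla)$ is an $\germe_X$-submodule of $\PX^1(\mcE)$ (Equation~(\ref{Eq: definition of the sheaf of transverse jets 3})), so a morphism that factors through the inclusion $\PXF^1(\nabla)\subset\PX^1(\mcE)$ does so uniquely; and, again by construction, the projection $\pi\colon\PXF^1(\nabla)\to\mcE$ in Equation~(\ref{Eq: short exact sequence of the first sheaf of transverse jets}) is the restriction to $\PXF^1(\nabla)$ of the projection $\pi\colon\PX^1(\mcE)\to\mcE$. Combining these, a morphism $\tau\colon\mcE(-D)\to\PXF^1(\nabla)$ is a meromorphic splitting with poles on $D$ of Equation~(\ref{Eq: short exact sequence of the first sheaf of transverse jets})---that is, $\pi\circ\tau$ is the canonical inclusion $\mcE(-D)\hookrightarrow\mcE$---precisely when its composition with $\PXF^1(\nabla)\subset\PX^1(\mcE)$ is a right meromorphic splitting with poles on $D$ of Equation~(\ref{Eq: short exact sequence of the sheaf of first jets}).

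Then I would set up the correspondence. Given $\hatnabla$ extending $\nabla$, Proposition~\ref{P: extensions and splittings} tells us that $\sigma_{\hatnabla}\colon\mcE(-D)\to\PX^1(\mcE)$ factors through $\PXF^1(\nabla)$; the induced morphism $\mcE(-D)\to\PXF^1(\nabla)$ is, by the previous paragraph, a meromorphic splitting with poles on $D$ of Equation~(\ref{Eq: short exact sequence of the first sheaf of transverse jets}), and this is the assignment of the statement. Conversely, given a meromorphic splitting $\tau\colon\mcE(-D)\to\PXF^1(\nabla)$ of that sequence, composing with $\PXF^1(\nabla)\subset\PX^1(\mcE)$ yields a right meromorphic splitting of Equation~(\ref{Eq: short exact sequence of the sheaf of first jets}), hence a unique meromorphic connection $\hatnabla\colon\mcE\to\OmegaDE$; since this splitting factors through $\PXF^1(\nabla)$, Proposition~\ref{P: extensions and splittings} guarantees that $\hatnabla$ extends $\nabla$. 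The two assignments are mutually inverse, because each is a restriction of the bijection $\hatnabla\leftrightarrow\sigma_{\hatnabla}$ and Proposition~\ref{P: extensions and splittings} matches their domains---meromorphic connections extending $\nabla$ on one side, right meromorphic splittings of Equation~(\ref{Eq: short exact sequence of the sheaf of first jets}) factoring through $\PXF^1(\nabla)$ on the other.

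I do not expect any genuine obstacle: the real content sits in Proposition~\ref{P: extensions and splittings} and in the previously established identification of meromorphic connections with right splittings of the first-jet sequence. The only points requiring care are bookkeeping ones. Because the sequence of Equation~(\ref{Eq: short exact sequence of the first sheaf of transverse jets}) is merely left-exact, one must adopt the convention that a \emph{meromorphic splitting with poles on $D$} of it is a morphism $\tau\colon\mcE(-D)\to\PXF^1(\nabla)$ with $\pi\circ\tau$ the canonical inclusion, rather than a splitting of a genuine short exact sequence. One must also use that the inclusion $\PXF^1(\nabla)\subset\PX^1(\mcE)$ is injective---so that the lift provided by Proposition~\ref{P: extensions and splittings} is unique and the two assignments are well defined---and both this and the compatibility of the two projections $\pi$ are immediate from the construction in Subsection~\ref{Subsection: transverse jets}. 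Finally, it is worth noting, as already observed for right meromorphic splittings in Subsection~\ref{Subsection: the first transverse jet}, that $\sigma_{\hatnabla}$ is automatically $\germe_X$-linear, so the objects on side~(b) are honest $\germe_X$-linear morphisms.
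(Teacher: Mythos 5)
Your proposal is correct and follows essentially the same route as the paper: the forward direction is exactly Proposition \ref{P: extensions and splittings}, and the converse is obtained by composing a splitting of Equation (\ref{Eq: short exact sequence of the first sheaf of transverse jets}) with the inclusion $\PXF^1(\nabla)\subset\PX^1(\mcE)$, recovering $\hatnabla$ from the resulting right splitting of Equation (\ref{Eq: short exact sequence of the sheaf of first jets}) and invoking Proposition \ref{P: extensions and splittings} again to see that it extends $\nabla$. Your extra bookkeeping remarks (left-exactness of the sequence, uniqueness of the factorization through the injective inclusion, compatibility of the two projections $\pi$) are correct and only make explicit what the paper leaves implicit.
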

\begin{proof}
    We have already established in Proposition \ref{P: extensions and splittings} that a meromorphic extension of $\nabla$ induces the meromorphic splitting $\sigma_{\hatnabla}$. It remains to verify the opposite direction. 
    
    Let $\sigma: \mcE(-D) \rightarrow \PXF^1(\nabla)$ be a meromorphic splitting of short exact sequence associated to $\PXF^1(\nabla)$. Composing with the inclusion $\PXF^1(\nabla)\subset \PX^1(\mcE)$, $\sigma$ induces a meromorphic splitting of the short exact sequence associated to the sheaf of first jets of $\mcE$, which corresponds to a meromorphic connection $\hatnabla: \mcE\rightarrow \OmegaDE$ as in Equation (\ref{Eq: short exact sequence of the sheaf of first jets}). Again by Proposition \ref{P: extensions and splittings}, $\hatnabla$ extends $\nabla$. Moreover, it follows directly by definition that $\sigma = \sigma_{\hatnabla}$, and therefore we have established the correspondence.
\end{proof}

\begin{rmk}\label{Rmk: meromorphic splitting when we have line bundles}
    When $\mcE = \mc L$, there is no need to fix the divisor $D$ \emph{a priori} in order to apply Theorem \ref{T: meromorphic extensions}, in the following sense. Let $\mcL'$ be a line bundle and let $\phi:\mc L'\rightarrow \PXF^1(\nabla)$ be an $\germe_X$-linear morphism. Denote by $\pi: \PXF^1(\nabla)\rightarrow \mcE$ the natural projection. Then, the composition $\pi \circ \phi: \mc L' \rightarrow \mc L$ is either the zero morphism or injective. In the injective case, $\mc L'$ is isomorphic to a sub-line bundle of $\mc L$, that is, $\mc L'\simeq \mc L(-D)$ for some effective divisor $D$. Therefore, $\phi$ induces a meromorphic splitting with poles on $D$ of the short exact sequence associated to $\PXF^1(\nabla)$, and then Theorem \ref{T: meromorphic extensions} applies.
\end{rmk}

In the spirit of \cite[Theorem 4.11]{massri-molinuevo-quallbrunn-21-zbMATH07317130}, we deduce some interesting corollaries.
    
\begin{cor} Let $X$ be a compact Kahler manifold, and let $\mcE$ be a reflexive sheaf on $X$. Suppose there exists a foliation $\F$ on $X$ and a $\F$-partial connection on $\mcE$ such that:
\begin{enumerate}[label = (\roman*)]
    \item the natural projection $\PXF^1(\nabla) \rightarrow \mcE$ is surjective; and
    \item $\rm{Ext}^1(\mcE, \CNF(\mcE))=0$.
\end{enumerate}
Then, $\mcE$ is locally free and $c_i(\mcE)=0$ for all $i\ge 1$.
\end{cor}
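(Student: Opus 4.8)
The plan is to use hypotheses (i) and (ii) to manufacture a genuine holomorphic connection on $\mcE$, and then to invoke Atiyah's theorem. Throughout, $\nabla$ must of course be flat for $\PXF^1(\nabla)$ to be defined.

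By hypothesis (i), the short exact sequence of Equation (\ref{Eq: short exact sequence of the first sheaf of transverse jets}) with $k=1$ is a genuine short exact sequence of $\germe_X$-modules
\[
0 \to \CNF(\mcE) \xrightarrow{\iota} \PXF^1(\nabla) \xrightarrow{\pi} \mcE \to 0 ,
\]
which therefore represents a class in $\mathrm{Ext}^1(\mcE,\CNF(\mcE))$; this group vanishes by hypothesis (ii), so the sequence splits and we obtain an $\germe_X$-linear section $\sigma\colon\mcE\to\PXF^1(\nabla)$ with $\pi\circ\sigma=\mathrm{id}_{\mcE}$. Composing $\sigma$ with the inclusion $\PXF^1(\nabla)\subset\PX^1(\mcE)$ and reading it as a right splitting with poles on $D=0$, Theorem \ref{T: meromorphic extensions} (equivalently, Proposition \ref{P: extensions and splittings}) produces a holomorphic connection $\hatnabla\colon\mcE\to\Omega_X^1(\mcE)$ extending $\nabla$. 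For the rest of the argument only the mere existence of a holomorphic connection on $\mcE$ is needed.

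Next I would show that a reflexive (indeed any coherent) sheaf on a smooth variety carrying a holomorphic connection is locally free. This is a local statement: fix $p\in X$ with local coordinates $x_1,\dots,x_n$, let $e_1,\dots,e_\mu$ be a minimal system of generators of $\mcE_p$ (Nakayama), and write $\hatnabla_{\partial_k}(e_i)=\sum_j a^k_{ij}e_j$ with $a^k_{ij}\in\germe_{X,p}$. If the relation module $R=\ker(\germe_{X,p}^{\mu}\to\mcE_p)$ were nonzero, choose $(f_1,\dots,f_\mu)\in R\setminus\{0\}$ of minimal vanishing order $d$ at $p$; minimality of the generating set forces $d\ge 1$. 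Applying $\hatnabla_{\partial_k}$ to $\sum_i f_ie_i=0$ yields the new relation $\big(\partial_k f_j+\sum_i f_i a^k_{ij}\big)_j\in R$; picking $k$ so that $\partial_k f_j$ has vanishing order $d-1$ for some $j$ (possible since the leading form of $f_j$ is a nonzero homogeneous polynomial of degree $d$, so some partial derivative of it is nonzero by the Euler relation in characteristic zero), while the correction term $\sum_i f_i a^k_{ij}$ has vanishing order $\ge d$, this new relation is nonzero of minimal order $\le d-1$. Iterating drives the minimal order down to $0$, i.e.\ produces a relation with a unit coefficient, which contradicts the minimality of $\{e_i\}$. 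Hence $R=0$ and $\mcE_p\cong\germe_{X,p}^{\mu}$; as $p$ was arbitrary, $\mcE$ is locally free.

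Finally, $\mcE$ is now a holomorphic vector bundle on the compact Kähler manifold $X$ admitting a holomorphic connection, so its Atiyah class vanishes, and Atiyah's theorem \cite{atiyah-57-zbMATH03128044} gives $c_i(\mcE)=0$ in $H^i(X,\Omega_X^i)$ for all $i\ge 1$; by the Hodge decomposition of $X$ these classes also vanish in $H^{2i}(X,\C)$, which is the asserted vanishing. I expect the local-freeness step to be the only genuinely delicate point: constructing the holomorphic connection is a formal consequence of the results already in the paper, and the vanishing of the Chern classes is Atiyah's theorem, but ruling out the singularities of the reflexive sheaf really requires exploiting the connection through the order-lowering argument above.
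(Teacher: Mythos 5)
Your proof is correct and follows exactly the route the paper intends: the paper states this corollary without proof as a direct consequence of Theorem \ref{T: meromorphic extensions} (with $D=0$, hypotheses (i) and (ii) yielding a holomorphic splitting, hence a holomorphic connection on $\mcE$), combined with the classical facts that a coherent sheaf carrying a holomorphic connection is locally free and that Atiyah's theorem then kills the Chern classes on a compact K\"ahler manifold. Your order-lowering argument for local freeness and the Hodge-theoretic remark simply fill in the details the paper leaves implicit.
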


\begin{cor} Let $\F$ be a foliation on a compact Kahler manifold $X$. Suppose that $\F$ does not have persistent singularities, and either $\rm{Ext}^1(\CNF, \CNF(\CNF))=0$ or $\rm{Ext}^1(N_{\F}, \CNF(N_{\F}))=0$. Then, $\CNF$ is locally free and $c_i(\CNF)=0$ for all $i\ge 1$.
\end{cor}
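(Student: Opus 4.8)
The plan is to deduce this from the preceding corollary, applied to $\mcE = \CNF$ endowed with the Bott connection $\nabla_B$ when $\mathrm{Ext}^1(\CNF, \CNF(\CNF)) = 0$, and to $\mcE = N_\F$ endowed with its Bott connection when $\mathrm{Ext}^1(N_\F, \CNF(N_\F)) = 0$. In both cases hypothesis (ii) of that corollary is precisely the assumed vanishing, so the only thing left to verify is hypothesis (i): surjectivity of the natural projection $\pi : \PXF^1(\nabla_B) \to \mcE$.

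For the conormal case this is immediate from Proposition \ref{P: ideal of persistent singularities}. Since $\F$ has no persistent singularities, $\idealpers(\F) = \germe_X$, and therefore
\[
    \CNF = \idealpers(\F) \otimes \CNF \subseteq \mathrm{im}(\pi : \PXF^1(\nabla_B) \to \CNF) \subseteq \CNF,
\]
so $\pi$ is surjective. The preceding corollary then gives that $\CNF$ is locally free with $c_i(\CNF) = 0$ for all $i \ge 1$.

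For the normal case I would first record the analogue of Proposition \ref{P: ideal of persistent singularities} for $(N_\F, \nabla_B)$, as announced at the beginning of Section \ref{Subsection: persistent singularities}: the inclusion $\idealpers(\F) \otimes N_\F \subseteq \mathrm{im}(\pi : \PXF^1(\nabla_B) \to N_\F)$. Granting this, the absence of persistent singularities again forces $\pi : \PXF^1(\nabla_B) \to N_\F$ to be surjective, so the preceding corollary yields that $N_\F$ is locally free with vanishing Chern classes. Since $\CNF$ is reflexive — it is the dual of $T_X/T_\F$ — we have $\CNF = (N_\F)^{*}$; hence $\CNF$ is locally free and $c_i(\CNF) = (-1)^{i}\, c_i(N_\F) = 0$ for all $i \ge 1$, which completes the argument.

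The one step that requires genuine work is the normal-sheaf analogue of Proposition \ref{P: ideal of persistent singularities}. The conormal proof uses the identity $(\nabla_B)_v(\omega) = \Lie_v(\omega) = \iota_v(d\omega)$ directly, whereas on $N_\F$ the Bott connection reads $(\nabla_B)_v(w) = [\tilde v, \tilde w] \bmod T_\F$; one must therefore transport the persistent-singularity relation $f \cdot d\omega = \sum \eta_i \wedge \omega_i$ to the normal sheaf through the pairing $N_\F \times \CNF \to \germe_X$ (or, as in the proof of Proposition \ref{P: codimension one, ideal of persistent singularities}, via an adjunction together with an injectivity statement), and check that the multiplier $f$ is unchanged. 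Once this is available, the rest is bookkeeping: Corollary \ref{Cor: image in the short exact sequence of transverse jets} converts surjectivity of $\pi$ into the membership condition, and the conclusion about $\CNF$ uses only standard facts on reflexive sheaves and Chern classes of dual bundles.
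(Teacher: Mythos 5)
Your proposal is correct and follows exactly the route the paper intends: the paper states this corollary without proof, as an immediate consequence of the preceding corollary applied to $(\CNF,\nabla_B)$ or $(N_\F,\nabla_B)$, with hypothesis (i) supplied by Proposition \ref{P: ideal of persistent singularities} (and its normal-sheaf analogue, which the paper asserts at the start of Section \ref{Subsection: persistent singularities}) together with $\idealpers(\F)=\germe_X$. Your added care in flagging that the $N_\F$-analogue of Proposition \ref{P: ideal of persistent singularities} needs checking, and the final passage from $N_\F$ back to $\CNF=(N_\F)^*$, are both appropriate and consistent with the paper.
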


\subsection{Flat extensions for codimension one foliations}

\begin{lemma}\label{L: flat partial connection factors through F-invariant divisors}
    Let $\F$ be a foliation on a complex smooth variety $X$. Let $(\mcE,\nabla)$ be a partial connection on a reflexive sheaf. Let $D\ge 0$ be a divisor on $X$. Then, $D$ is a $\F$-invariant divisor if, and only if, there exists a partial connection $\nabla'$ on $\mcE(-D)$ such that the natural inclusion $\mcE(-D) \subset \mcE$ is a horizontal morphism.
\end{lemma}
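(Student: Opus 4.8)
The plan is to work locally and reduce the statement to a computation with a defining equation for $D$. Fix a point of $X$ and a local equation $h \in \germe_X$ for the divisor $D$; since $X$ is smooth, $\mcE(-D)$ is locally identified with $h\cdot \mcE \subset \mcE$, and the natural inclusion $\mcE(-D) \subset \mcE$ is, in this chart, multiplication by $h$. Recall that $D$ being $\F$-invariant means precisely that $v(h) \in (h)$ for every $v \in T_{\F}$, i.e.\ $v(h) = h\cdot a_v$ for some $a_v \in \germe_X$ depending $\germe_X$-linearly on $v$.

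For the direction ``$D$ invariant $\Rightarrow$ there exists $\nabla'$'', I would define $\nabla'$ on a section $h\cdot s$ of $\mcE(-D)$ by imitating the Leibniz rule: for $v \in T_{\F}$, set $\nabla'_v(h\cdot s) := h\cdot\bigl(\nabla_v(s) + a_v\cdot s\bigr)$, where $a_v = v(h)/h$. One checks this is independent of the chosen generator $h$ (replacing $h$ by $uh$ with $u$ a unit changes $a_v$ by $v(u)/u$, which is exactly compensated), that it satisfies the Leibniz rule in its own right, and that by construction the inclusion $h\colon \mcE(-D) \to \mcE$ intertwines $\nabla'$ and $\nabla$: indeed $\nabla_v(h\cdot s) = v(h)\cdot s + h\cdot \nabla_v(s) = h\cdot(a_v s + \nabla_v(s))$, which is the image of $\nabla'_v(h\cdot s)$. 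Thus the inclusion is horizontal. (Since $\mcE(-D)$ is reflexive, hence torsion-free, and the construction is canonical on the smooth locus of $X$ minus $D$, it glues to a global partial connection.)

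For the converse, suppose $\nabla'$ is a partial connection on $\mcE(-D)$ making $\iota\colon \mcE(-D) \hookrightarrow \mcE$ horizontal. Working in a local chart as above, horizontality says $\nabla_v(h\cdot s) = h\cdot \nabla'_v(s')$ where $s'$ is the section of $\mcE(-D)$ corresponding to $h\cdot s$. On the other hand, since $\nabla$ is a partial connection on $\mcE$, $\nabla_v(h\cdot s) = v(h)\cdot s + h\cdot\nabla_v(s)$. Comparing, $v(h)\cdot s = h\cdot(\nabla'_v(s') - \nabla_v(s)) \in h\cdot\mcE$. Choosing $s$ to be part of a local generating set of $\mcE$ and using that $\mcE$ is torsion-free (reflexive), one concludes $v(h) \in (h)$, i.e.\ $D$ is $\F$-invariant. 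The mild technical point here is to make the last divisibility argument clean when $\mcE$ is not free; I would either pass to the smooth locus of the reflexive sheaf (where $\mcE$ is free and the complement has codimension $\ge 2$, so the divisibility extends) or invoke torsion-freeness of $\mcE$ directly against a nonzero section.

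The main obstacle, such as it is, is bookkeeping rather than conceptual: one must verify that the locally defined $\nabla'$ is genuinely well defined (independent of local equation and of trivialization) and genuinely a partial connection, and then that it glues over $X$ using reflexivity/torsion-freeness to handle the possible non-local-freeness of $\mcE$ and the singularities of the foliation. All of these are routine once the local formula $\nabla'_v(h\cdot s) = h\bigl(\nabla_v s + \tfrac{v(h)}{h}s\bigr)$ is written down; the $\F$-invariance of $D$ is exactly what makes $\tfrac{v(h)}{h}$ holomorphic, which is the crux of both implications.
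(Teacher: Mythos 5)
Your argument is correct and follows essentially the same route as the paper: both directions work with a local equation $f$ for $D$, use the Leibniz rule to see that $\nabla$ preserves $f\cdot\mcE$ exactly when $v(f)\in(f)$ for all $v\in T_{\F}$, and your explicit formula $\nabla'_v(hs)=h\bigl(\nabla_v s+\tfrac{v(h)}{h}s\bigr)$ is precisely the restriction of $\nabla$ that the paper writes down. The only difference is that you spell out the well-definedness and the torsion-freeness/codimension argument for the converse, which the paper leaves implicit.
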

\begin{proof}
    Let us first assume that $D$ is $\F$-invariant. Suppose that $D$ is defined by $f=0$ on an open subset $U \subset X$. The $\F$-invariance of $D$ means that, for all $v\in T_{\F}$, we have that $v(f)/f \in \germe_X$, that is, $df:T_X \rightarrow \germe_X$ induces the morphism $\df(f): T_{\F} \rightarrow \germe_X(-D)$. Then, for every $s\in \mcE$, we have that
    \[
    \nabla(f \cdot s)= \df(f) \otimes s + f \cdot \nabla(s) = \frac{\df(f)}{f} \otimes fs + f \cdot \nabla(s) \in \CTF^1(\mcE(-D)).
    \]
    Therefore, $\nabla$ induces a partial connection on $\mcE(-D)$ such that the inclusion is a horizontal morphism.

    Conversely, assume that there exists a connection $\nabla'$ on $\mcE(-D)$ such that the inclusion is a horizontal morphism. As before, suppose that $D$ is defined by $f=0$ on an open subset $U \subset X$. For every local section $s\in \mcE$ and $v\in T_{\F}$, we have that
    \[
    \nabla_v'(f \cdot s) = \nabla_v(f \cdot s) = v(f) \cdot s + f \cdot \nabla_v(s) \in \mcE(-D),
    \]
    and since $f \cdot \nabla_v(s) \in \mcE(-D)$, it follows that $v(f)\cdot s \in \mcE(-D)$. As this holds for all $s\in \mcE$, we conclude that $v(f) \in \germe_X(-D)$ for all $v\in T_{\F}$. Therefore, $D$ is $\F$-invariant. 
\end{proof}

Let $(\mcE,\nabla)$ be a flat partial connection on a reflexive sheaf. Let $D\ge 0$ be a $\F$-invariant divisor. We define a \emph{horizontal meromorphic splitting} of the short exact sequence associated to $\PXF^1(\nabla)$ as a meromorphic splitting $\sigma: \mcE(-D) \rightarrow \PXF^1(\nabla)$ of Equation (\ref{Eq: short exact sequence of the first sheaf of transverse jets}) that is horizontal with respect to the corresponding flat partial connections: $\nabla^1$ on $\PXF^1(\nabla)$, and the connection given by Lemma \ref{L: flat partial connection factors through F-invariant divisors} on $\mcE(-D)$. Still by Lemma \ref{L: flat partial connection factors through F-invariant divisors}, observe that it does not make sense to talk about a horizontal meromorphic splitting of Equation (\ref{Eq: short exact sequence of the first sheaf of transverse jets}) if $D$ is not $\F$-invariant.

\begin{thm}\label{T: flat meromorphic extensions}
    Let $\F$ be a codimension one foliation on a smooth complex variety $X$. Let $(\mcE,\nabla)$ be a flat partial connection on a reflexive sheaf. Let $D\ge 0$ be a $\F$-invariant divisor on $X$. Then, the correspondence of Theorem \ref{T: meromorphic extensions} induces a correspondence between:
\begin{enumerate}[label = (\alph*)]
    \item\label{Item: flat meromorphic extension} flat meromorphic connections $\hatnabla: \mcE \rightarrow \OmegaDE$ extending $\nabla$; and
    \item\label{Item: horizontal meromorphic splitting} horizontal meromorphic splitting with poles on $D$ of the short exact sequence associated to the first transverse jet of $\nabla$;
\end{enumerate}
\end{thm}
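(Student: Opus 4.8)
The plan is to derive this from Theorem \ref{T: meromorphic extensions}. That result already gives a bijection between meromorphic connections $\hatnabla\colon\mcE\to\OmegaDE$ extending $\nabla$ and meromorphic splittings $\sigma_{\hatnabla}\colon\mcE(-D)\to\PXF^1(\nabla)$ of the short exact sequence (\ref{Eq: short exact sequence of the first sheaf of transverse jets}), so it suffices to prove that, under this correspondence, $\hatnabla$ is flat if and only if $\sigma_{\hatnabla}$ is horizontal. The hypothesis that $D$ is $\F$-invariant will be used in two places: it is what makes the notion of a horizontal splitting meaningful, through the partial connection on $\mcE(-D)$ produced by Lemma \ref{L: flat partial connection factors through F-invariant divisors}, and it ensures $\nabla_v(s)\in\mcE(-D)$ whenever $s$ is a local section of $\mcE(-D)$ and $v\in T_{\F}$, which is needed to even form $\sigma_{\hatnabla}(\nabla_v(s))$.

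First I would compute, locally, the failure of $\sigma_{\hatnabla}$ to be horizontal. Fix $v\in T_{\F}$ and a local section $s$ of $\mcE(-D)$; then $\hatnabla(s)$ is holomorphic, say $\hatnabla(s)=\sum\omega_i\otimes s_i$, so $\sigma_{\hatnabla}(s)=1\otimes s-\sum\omega_i\otimes s_i$ by Equation (\ref{Eq: right meromorphic splitting 3}), while $\sigma_{\hatnabla}(\nabla_v(s))=1\otimes\nabla_v(s)-\hatnabla(\nabla_v(s))$. Applying the explicit description of $\nabla^1$ in Equation (\ref{Eq: description of the partial connection on the first sheaf of transverse jets}) to $\sigma_{\hatnabla}(s)$ and subtracting, I expect to find
\[
\nabla^1_v\big(\sigma_{\hatnabla}(s)\big)-\sigma_{\hatnabla}\big(\nabla_v(s)\big)
=\iota\Big(\hatnabla(\nabla_v(s))-\sum\omega_i\otimes\nabla_v(s_i)-\sum\Lie_v(\omega_i)\otimes s_i\Big),
\]
where the $1$-form in parentheses lies in $\CNF(\mcE)$ automatically, since both terms on the left project to $\nabla_v(s)$ under the horizontal projection $\pi$ of (\ref{Eq: short exact sequence of the first sheaf of transverse jets}).

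The second step is to identify that $1$-form with a contraction of the curvature of $\hatnabla$. Using that $\hatnabla$ extends $\nabla$ (so that $\nabla_w=\iota_w\circ\hatnabla$ for $w\in T_{\F}$, and in particular $\nabla_v(s)=\sum(\iota_v\omega_i)\,s_i$), Cartan's formula $\Lie_v\omega_i=d(\iota_v\omega_i)+\iota_v(d\omega_i)$, and the contraction identity $\iota_v(\omega_i\wedge\hatnabla(s_i))=(\iota_v\omega_i)\hatnabla(s_i)-\omega_i\otimes\iota_v(\hatnabla(s_i))$, a direct manipulation should reduce that $1$-form to $-\iota_v\big(R(s)\big)$, where $R=d^{\hatnabla}\circ\hatnabla$ denotes the curvature of $\hatnabla$. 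Thus $\sigma_{\hatnabla}$ is horizontal if and only if $\iota_v R$ annihilates every local section of $\mcE(-D)$ for every $v\in T_{\F}$; and since $\mcE$, hence $\Omega_X^1(2D)(\mcE)$, is torsion-free and $\mcE(-D)$ is locally $f\cdot\mcE$ for a local equation $f$ of $D$, this is in turn equivalent to $\iota_v R=0$ for all $v\in T_{\F}$.

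Finally I would close the argument using that $\F$ has codimension one. Because $\hatnabla$ restricts to the flat partial connection $\nabla$ and $T_{\F}$ is involutive, $R(v,w)=0$ for all $v,w\in T_{\F}$; and since $\wedge^2(T_X/T_{\F})=0$ in codimension one, the natural morphism $T_{\F}\otimes T_X\to\wedge^2 T_X$ is surjective, so the vanishing of $\iota_v R$ for all $v\in T_{\F}$ already forces $R=0$, that is, $\hatnabla$ flat; the reverse implication is trivial, whence the claimed correspondence. I expect the middle step to be the main obstacle: one has to run the local curvature computation cleanly, matching the abstractly defined connection $\nabla^1$ (built from transverse differential operators) against the honest curvature of $\hatnabla$, while keeping pole orders and the torsion-freeness hypotheses under control. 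The codimension-one reduction of ``$R=0$'' to ``$\iota_v R=0$ for $v\in T_{\F}$'' is the only other point where a genuine hypothesis is needed, and is routine.
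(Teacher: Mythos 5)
Your proposal follows essentially the same route as the paper: the explicit formula for $\nabla^1$ applied to $\sigma_{\hatnabla}(s)=1\otimes s-\sum\omega_i\otimes s_i$ identifies the defect of horizontality with the contraction of the curvature of $\hatnabla$ along $T_{\F}$, and the codimension-one hypothesis upgrades $\iota_v\kappa(\hatnabla)=0$ for all $v\in T_{\F}$ to $\kappa(\hatnabla)=0$ (the paper isolates this last step as Lemma \ref{L: codimension one foliation, curvature of meromorphic connection}). The only imprecision is your claim that $\bigwedge^2(T_X/T_{\F})=0$: for a singular foliation this is a torsion sheaf rather than zero, so one should instead argue, as the paper does, that the curvature takes values in a torsion-free sheaf and $\mcHom\big(\bigwedge^2(T_X/T_{\F}),\cdot\big)$ vanishes, which yields the same conclusion.
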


Before proving Theorem \ref{T: flat meromorphic extensions}, we need the following lemma about meromorphic connections.

\begin{lemma}\label{L: codimension one foliation, curvature of meromorphic connection}
    
    Let $\mcE$ be a coherent sheaf on smooth variety $X$, and let $\nabla: \mcE \rightarrow \OmegaDE$ be a meromorphic connection on $\mcE$. Let
    \begin{equation}
        \begin{split}
            \kappa: \mcE & \rightarrow \Omega_X^2(2D)(\mcE) \\
            s & \mapsto \Big(v\wedge w\in \bigwedge^2T_X \mapsto \nabla_{[v,w]}(s) - \nabla_v \circ \nabla_w(s) - \nabla_w \circ \nabla_v(s) \Big)
        \end{split}
    \end{equation}
    be the \emph{curvature} of $\nabla$. Suppose there exists a codimension one foliation $\F$ such that $\kappa(v \wedge w)=0$ for all $v\in T_{\F}$, and $w\in T_X$. Then $\nabla$ is flat.
\end{lemma}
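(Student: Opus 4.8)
The plan is to exploit the $\germe_X$-linearity of the curvature together with the codimension-one hypothesis, following the same mechanism used in the proof of Proposition~\ref{P: codimension one, ideal of persistent singularities}.

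First I would recall the standard fact that the curvature of any connection is $\germe_X$-linear in its argument: applying the Leibniz rule twice and using $[v,w](f)=v(w(f))-w(v(f))$, one checks $\kappa(f\cdot s)=f\cdot\kappa(s)$ for all $f\in\germe_X$ (so, in particular, the expression is alternating in $v,w$ and defines a genuine section of $\Omega_X^2(2D)(\mcE)$). Hence $\kappa$ is a morphism of coherent sheaves $\kappa:\mcE\to\Omega_X^2(2D)(\mcE)$, and $\nabla$ is flat exactly when $\kappa\equiv0$. The hypothesis that $\kappa(v\wedge w)=0$ whenever $v\in T_{\F}$ and $w\in T_X$ says precisely that the composition of $\kappa$ with the contraction morphism
\[
    c:\Omega_X^2(2D)(\mcE)\longrightarrow\mcHom\big(T_{\F},\,\Omega_X^1(2D)(\mcE)\big),\qquad \eta\longmapsto\big(v\mapsto\iota_v\eta\big),
\]
vanishes. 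So it suffices to show that $c$ is injective when $\F$ has codimension one.

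To analyze $c$, I would use that $X$ is smooth, so $\Omega_X^1$ and $\Omega_X^2=\bigwedge^2\Omega_X^1$ are locally free, dual to $T_X$ and $\bigwedge^2T_X$; together with the $\mcHom$–$\otimes$ adjunction this identifies $c$ with the morphism
\[
    \mcHom\big(\textstyle\bigwedge^2 T_X,\,\mcE(2D)\big)\longrightarrow\mcHom\big(T_{\F}\otimes T_X,\,\mcE(2D)\big)
\]
obtained by precomposing with the canonical map $\mu:T_{\F}\otimes T_X\to\bigwedge^2 T_X$. Applying $\bigwedge^2$ to $0\to T_{\F}\to T_X\to T_X/T_{\F}\to0$ gives the exact sequence $T_{\F}\otimes T_X\xrightarrow{\ \mu\ }\bigwedge^2T_X\to\bigwedge^2(T_X/T_{\F})\to0$ (exactly as in the proof of Proposition~\ref{P: codimension one, ideal of persistent singularities}, via \cite[Proposition A2.2]{eisenbud-zbMATH00704831}); applying the left-exact functor $\mcHom(-,\mcE(2D))$ then shows $\ker c=\mcHom\big(\bigwedge^2(T_X/T_{\F}),\,\mcE(2D)\big)$.

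Finally, since $\F$ has codimension one, $T_X/T_{\F}$ is torsion-free of rank one, so $\bigwedge^2(T_X/T_{\F})$ has generic rank zero, i.e.\ is a torsion sheaf (supported on $\sing\F$). As $\mcE(2D)$ is torsion-free (which holds whenever $\mcE$ is, in particular when $\mcE$ is reflexive, the case relevant to Theorem~\ref{T: flat meromorphic extensions}), any morphism from a torsion sheaf to it vanishes; hence $\ker c=0$, so $\kappa\equiv0$ and $\nabla$ is flat. The only mildly delicate points are the adjunction bookkeeping identifying $c$ with precomposition by $\mu$ (and the sign/alternation check that $\kappa$ is a well-defined section of $\Omega_X^2(2D)(\mcE)$); the conceptual heart is simply that codimension one forces $\operatorname{coker}\mu=\bigwedge^2(T_X/T_{\F})$ to be torsion, so that contraction with $T_{\F}$ is injective against a torsion-free target — the same phenomenon that makes $\iota_\bullet$ injective in Proposition~\ref{P: codimension one, ideal of persistent singularities}.
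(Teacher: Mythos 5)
Your proof is correct and follows essentially the same route as the paper: both reduce flatness to the injectivity of contraction against $T_{\F}$, obtained by applying $\bigwedge^2$ to $0\to T_{\F}\to T_X\to T_X/T_{\F}\to 0$, dualizing into $\mcE(2D)$, and observing that the kernel $\mcHom\big(\bigwedge^2(T_X/T_{\F}),\mcE(2D)\big)$ vanishes in codimension one. Your explicit remark that this vanishing needs $\mcE(2D)$ torsion-free (automatic in the reflexive case where the lemma is applied) is a point the paper's proof leaves implicit.
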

\begin{proof} The proof is similar to that of Proposition \ref{P: codimension one, ideal of persistent singularities}. Let us denote $\mcE' = \mcE(2D)$. Starting with the short exact of the tangent sheaf of $\F$, we naturally deduce the short exact sequence
    \[
    0 \rightarrow \mc K \rightarrow \mcHom(\mcE,\Omega_X^2(\mcE')) \xrightarrow{\phi} \mcHom(\mcE, \mcHom(T_{\F} \otimes T_X, \mcE')),
    \]
    where $\mc K = \mcHom(\mcE, \mcHom(\bigwedge^2(T_X/T_{\F}),\mcE'))$. Since $\F$ has codimension one, it follows that $\mc K=0$, that is, $\phi$ is injective. Observe that $\kappa \in \mcHom(\mcE,\Omega_X^2(\mcE'))$, and for every $v\otimes w\in T_{\F} \otimes T_X$, from the construction of $\phi$ we have that
    \[
    \phi(\kappa)(v\otimes w) = \kappa(w\wedge w) = 0.
    \]
    Hence, $\phi(\kappa)=0$, and because $\phi$ is injective, it follows that $\kappa =0$.
\end{proof}

\begin{proof}[Proof of Theorem \ref{T: flat meromorphic extensions}]
    First, let us prove that a horizontal meromorphic splitting induces a flat meromorphic connection. Let $\sigma: \mcE(-D) \rightarrow \PXF^1(\nabla)$ be the meromorphic splitting, and let $\hatnabla$ the respective meromorphic connection. Since $\F$ has codimension one, by Lemma \ref{L: codimension one foliation, curvature of meromorphic connection}, it is enough to verify that
    \[
    \hatnabla_{[v,w]} - \hatnabla_{v} \circ \hatnabla_w + \hatnabla_w \circ \hatnabla_v =0, \forall v\in T_{\F}, w\in T_X.
    \]
    
    Let $s\in \mcE(-D)$, and let us write
    \[
    \sigma(s) = 1\otimes s - \sum \eta_i \otimes s_i \in \PXF^1(\nabla), \eta_i\in \Omega_X^1, s_i\in \mcE.
    \]
    From this expression, it follows that
    \[
    \hatnabla(s) = \sum \eta_i \otimes s_i.
    \]
    Moreover, since $\sigma$ is flat, for every $v\in T_{\F}$,
    \begin{equation*}
        \begin{split}
            \sigma( \hatnabla_v(s)) & = \sigma( \nabla_v(s)) = \nabla_v^1(\sigma(s)) = \nabla^1_v\Big(1\otimes s - \sum \eta_i \otimes s_i\Big) \\
            & = 1\otimes \nabla_v(s) - \sum \Lie_v(\eta_i)\otimes s_i - \eta_i \otimes \nabla_v(s_i),
        \end{split}
    \end{equation*}
    where we are using the description of $\nabla^1$ given by Equation (\ref{Eq: description of the partial connection on the first sheaf of transverse jets}). Observe, therefore, that we have
    \[
    \hatnabla(\hatnabla_vs) = \sum \Lie_v(\eta_i) \otimes s_i + \eta_i \otimes \nabla_v(s_i).
    \]
    From the equations above, we calculate that
    \begin{equation*}
        \begin{split}
         \kappa_{\hatnabla}(v\wedge w)(s) & =   \hatnabla_{[v,w]}(s) - \hatnabla_{v} \circ \hatnabla_w(s) + \hatnabla_w \circ \hatnabla_v (s) \\
         & = \sum \eta_i([v,w]) \cdot s_i - \hatnabla_v\Big(\sum \eta_i(w)s_i\Big) \\
         & + \Big( \sum \Lie_v(\eta_i)(w)\cdot s_i + \eta_i(w) \cdot \nabla_v(s_i) \Big) \\
         & = \sum \eta_i([v,w]) \cdot s_i - v(\eta_i(w))\cdot s_i + d\eta_i(v,w) +w(\eta_i(w))\cdot s_i =0.
        \end{split}
    \end{equation*}
    Therefore, $\hatnabla$ is flat.

    The converse follows by the same calculations.
\end{proof}

\begin{rmk} Let us compare Theorem \ref{T: flat meromorphic extensions}
with \cite[Theorem 5.4]{fazoli25-arXiv:2505.11662}. Consider the case where $X$ is a complex manifold, $\F$ is a smooth foliation and $\mcE$ is locally free. Let $\sigma: \mcE \rightarrow \PXF^1(\mcE)$ be a horizontal splitting of the short exact sequence associated to the first sheaf of transverse jets of $(\mcE,\nabla)$. Observe that $\sigma$ induces the \emph{transverse homogeneous differential equation}
\begin{equation*}
    \begin{split}
        \phi: \PXF^1(\nabla) & \rightarrow \CNF(\mcE) \\
        \xi & \mapsto \xi - \sigma(\pi(\xi)), 
    \end{split}
\end{equation*}
which in turn induces the $\C$-linear morphism
\begin{equation*}
    \begin{split}
        E: \ker \nabla & \rightarrow  \CNF(\mcE) \\
        s& \mapsto \phi \circ d^1_{X,\mcE}(s)
    \end{split}
\end{equation*}
Let $\hatnabla$ be the flat extension of $\nabla$ corresponding to $\sigma$ by Theorem \ref{T: flat meromorphic extensions}. Using local coordinates, and considering the natural inclusion $\CNF(\mcE) \subset \Omega_X^1(\mcE)$, we have that $E = \hatnabla$, that is, $\ker E  = \ker \hatnabla$. Thus, with the notation of \cite[Theorem 5.4]{fazoli25-arXiv:2505.11662}, $\hatnabla$ is the flat extension of $\nabla$ associated to the transverse differential equation $\phi$. In this sense, the correspondences established in Theorem \ref{T: flat meromorphic extensions} and \cite[Theorem 5.4]{fazoli25-arXiv:2505.11662} are the same.
\end{rmk}

Finally, let us specify Theorem \ref{T: flat meromorphic extensions} for flat partial connections on line bundles. We start by recalling the following well-known proposition.

\begin{prop}\label{P: flat meromorphic extension of a partial connection on line bundle}
    Let $\F$ be a foliation on a complex manifold $X$, and let $\nabla$ be a flat meromorphic connection on a line bundle $\mc L$, with poles on a divisor $D\ge 0$. If $\nabla$ induces a partial connection, then $D$ is $\F$-invariant. %Moreover, the converse is true if $D$ is a divisor with no multiple components.
\end{prop}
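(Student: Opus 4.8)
\emph{Proof plan.} The plan is to reduce the statement to a one-line curvature computation in a local trivialization of $\mc L$. Recall, as in Lemma~\ref{L: flat partial connection factors through F-invariant divisors}, that a divisor locally defined by $f=0$ is $\F$-invariant precisely when $v(f)/f\in\germe_X$ for every $v\in T_{\F}$, and that this property is local and may be checked on a dense open subset of each irreducible component: indeed $v(f)/f$ is automatically holomorphic away from $\{f=0\}$, so if it is holomorphic along a dense open subset of $\{f=0\}$ its polar set has codimension $\ge 2$ in the smooth variety $X$ and it extends, by Riemann's extension theorem, to a holomorphic function. It therefore suffices to fix an irreducible component $Z$ of the polar divisor of $\nabla$, a general point $p\in Z$ (so that near $p$ the form $Z$ is smooth, a reduced local equation $g$ of $Z$ is prime in the regular local ring $\germe_{X,p}$, and $\nabla$ has poles only along $Z$), and to prove that $v(g)\in(g)$ for every $v\in T_{\F}$.

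Near such a $p$ we trivialize $\mc L$ by a generator $e$ and write $\nabla(e)=\alpha\otimes e$, where $\alpha$ is a meromorphic $1$-form whose polar locus is exactly $Z$; writing $\alpha=\beta/g^{m}$ with $\beta$ holomorphic, $m\ge 1$ the pole order, and $\beta$ not divisible by $g$. Two inputs enter. First, the curvature of a connection on a line bundle in a local trivialization is the globally defined $2$-form $d\alpha$, so flatness of $\nabla$ (the vanishing of the curvature $\kappa$ of Lemma~\ref{L: codimension one foliation, curvature of meromorphic connection}) gives $d\alpha=0$, which after clearing denominators reads $g\,d\beta=m\,dg\wedge\beta$. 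Second, the hypothesis that $\nabla$ induces a partial connection means that $\alpha(v)=\beta(v)/g^{m}$ is holomorphic for every $v\in T_{\F}$, hence $g^{m}\mid\beta(v)$; in particular $\beta(v)\in(g)$.

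Now evaluate the identity $g\,d\beta=m\,dg\wedge\beta$ on a bivector $v\wedge w$ with $v\in T_{\F}$ and $w\in T_X$ arbitrary. The left-hand side equals $g\cdot d\beta(v,w)$, hence lies in the ideal $(g)$ because of the explicit factor $g$. On the right-hand side, $(dg\wedge\beta)(v,w)=dg(v)\,\beta(w)-dg(w)\,\beta(v)$, and the term $dg(w)\,\beta(v)$ lies in $(g)$ by the previous paragraph; since $m$ is a nonzero constant, we conclude that $dg(v)\,\beta(w)\in(g)$ for every $w\in T_X$. As $\beta$ is not divisible by $g$, in local coordinates some coefficient of $\beta$ is not in $(g)$, so choosing $w$ to be the corresponding coordinate vector field gives $\beta(w)\notin(g)$; since $g$ is prime in $\germe_{X,p}$, it follows that $g\mid dg(v)=v(g)$, which is the desired $\F$-invariance of $Z$ at $p$.

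The computational heart of the argument is thus essentially trivial once the bookkeeping is set up; the two points that require a little care are the reduction to a general point of a single component of the polar divisor together with the extension across the codimension $\ge 2$ locus afterwards, and the (mild) observation that what is being shown to be $\F$-invariant is exactly the \emph{polar} divisor of $\nabla$ — if $\nabla$ has a pole of order $m\ge 1$ along $Z$ then the factor $g$ on the left of $g\,d\beta=m\,dg\wedge\beta$ is what drives the whole argument, whereas for a component along which $\nabla$ has no pole there is nothing (indeed, in general nothing true) to prove.
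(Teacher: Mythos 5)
Your argument is correct and takes essentially the same route as the paper: both proofs trivialize $\mc L$, write the connection form in lowest terms, use flatness to get $f\,d\Omega - df\wedge\Omega=0$ (your $g\,d\beta = m\,dg\wedge\beta$), contract with $v\in T_{\F}$, and combine the holomorphy of $\Omega(v)/f$ with the fact that the local equation of the polar component does not divide $\Omega$ to conclude $v(f)\in(f)$. Your additional bookkeeping --- the explicit pole order $m$, the reduction to general points of each component followed by Riemann extension, and the remark that only the actual polar divisor can be shown invariant --- merely makes explicit what the paper's normalization of $\Omega/f$ handles implicitly.
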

\begin{proof}
    Let $U\subset X$ be an open subset where $\mc L$ is free with basis $s\in \mc L$, and $D\cap U = \{f=0\}$ for some holomorphic function. Since $\nabla$ is meromorphic with poles on $D$, we can express $\nabla(s) = (\Omega/f) \otimes s\in \Omega_X^1(D) \otimes \mc L$ for some holomorphic 1-form $\Omega$, and $(\Omega)_0$ with no irreducible component in common with $D$. Moreover, since $\nabla$ is flat, it follows that $d(\Omega/f)=0$, which is the same as $f\cdot d\Omega - df \wedge \Omega =0$. Applying $v\in T_{\F}$ to this equality, we conclude that
    \begin{equation*}\label{Eq: flat condition for a meromorphic connection on a line bundle}
        f \cdot i_vd\Omega - v(f) \cdot \Omega + \Omega(v)\cdot df  =0, \forall v\in T_{\F}.
    \end{equation*}
    Since $\nabla$ induces a partial connection, using the basis description of $\nabla$, it follows that  $\Omega(v)/f \in \germe_X$ for every $v\in T_{\F}$, or in other words, $\Omega(v) \in \germe_X(-D)$ for every $v\in T_{\F}$. % Hence, Equation (\ref{Eq: flat condition for a meromorphic connection on a line bundle}) 
    From the equation above, it follows that $v(f) \cdot \Omega \in \germe_X(-D) \otimes \Omega_X^1$, and as $(\Omega)_0$ has no irreducible component in common with $D$, we conclude that $v(f) \in \germe_X(-D)$ for all $v\in T_{\F}$. Therefore, $D$ is $\F$-invariant. 
\end{proof}

From Theorem \ref{T: flat meromorphic extensions} and Proposition \ref{P: flat meromorphic extension of a partial connection on line bundle}, we conclude the following corollary.

\begin{cor}
    Let $\F$ be a codimension one foliation on a smooth complex variety $X$, and let $(\mcE,\nabla)$ be a flat partial connection on a reflexive sheaf. Then, using the correspondence of Theorem \ref{T: meromorphic extensions}, a flat meromorphic connection $\hatnabla: \mcE \rightarrow \OmegaDE$ induces a horizontal meromorphic splitting with poles on $D$ of the short exact sequence associated to the first transverse jet of $\nabla$.
\end{cor}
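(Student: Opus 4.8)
The plan is to reduce to Theorem~\ref{T: flat meromorphic extensions} by first proving that the polar divisor of $\hatnabla$ is automatically $\F$-invariant; in other words, the corollary should be read as the reflexive-sheaf version of Proposition~\ref{P: flat meromorphic extension of a partial connection on line bundle} combined with Theorem~\ref{T: flat meromorphic extensions}. Concretely, I take $D$ to be the polar divisor of $\hatnabla$ (there is no loss in doing so, since a meromorphic splitting with poles on a smaller divisor is in particular one with poles on the originally given $D$), and fix an irreducible component $D_1$ of $D$. Since $\F$-invariance of $D_1$ can be tested at the generic point of $D_1$, and a reflexive — indeed torsion-free — sheaf on a smooth variety is locally free in codimension one, I may work near the generic point of $D_1$ with $\mcE$ free.

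There I would choose local coordinates, a prime local equation $f$ of $D_1$ in the regular local ring $\germe_{X,x}$, and a local basis of $\mcE$, and write $\hatnabla = d + A$ with $A = (\theta_{ij}/f)$ and $\theta_{ij} \in \Omega_X^1$; because $D_1$ is a genuine pole, some $\theta_{ij} \notin f\cdot\Omega_X^1$. The hypothesis that $\hatnabla$ restricts to the holomorphic partial connection $\nabla$ reads $\theta_{ij}(v) \in (f)$ for every $v \in T_{\F}$, and flatness gives the matrix identity $dA + A\wedge A = 0$, that is, $f\,d\theta_{ij} - df\wedge\theta_{ij} + \sum_k \theta_{ik}\wedge\theta_{kj} = 0$ for all $i,j$. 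Contracting with a vector field $v \in T_{\F}$ and reducing modulo $(f)$ kills every term except $-v(f)\,\theta_{ij}$ — indeed $f\,i_v(d\theta_{ij})$ is divisible by $f$, and the contractions $\theta_{\bullet\bullet}(v)$ all lie in $(f)$ — so $v(f)\,\theta_{ij} \in f\cdot\Omega_X^1$ for all $i,j$. Taking $i,j$ with $\theta_{ij} \notin f\cdot\Omega_X^1$ and a coefficient of $\theta_{ij}$ not divisible by $f$, primality of $f$ forces $v(f) \in (f)$; as $v$ was arbitrary, $D_1$ is $\F$-invariant, hence so is $D$.

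Once $D$ is known to be $\F$-invariant, Theorem~\ref{T: flat meromorphic extensions} applies verbatim and identifies the meromorphic splitting $\sigma_{\hatnabla}$ attached to $\hatnabla$ by Proposition~\ref{P: extensions and splittings} — equivalently, by the correspondence of Theorem~\ref{T: meromorphic extensions} — with a horizontal meromorphic splitting with poles on $D$, which is the assertion. The main obstacle is the computation of the previous paragraph: one has to handle the matrix form of the curvature identity carefully and check that contraction along the codimension-one foliation, together with the partial-connection hypothesis, isolates exactly the term $v(f)\,\theta_{ij}$, precisely as in the rank-one case of Proposition~\ref{P: flat meromorphic extension of a partial connection on line bundle}. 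The passage from a local basis of $\mcE$ to the global reflexive sheaf is harmless, because $\F$-invariance of a divisor is a condition at its codimension-one points, where $\mcE$ is locally free.
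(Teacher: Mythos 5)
Your proposal is correct and has the same two-step skeleton as the paper's (one-line) deduction: first show that the polar divisor of $\hatnabla$ is $\F$-invariant, then feed the resulting splitting $\sigma_{\hatnabla}$ into Theorem \ref{T: flat meromorphic extensions}. Where you genuinely diverge is in the first step. The paper simply cites Proposition \ref{P: flat meromorphic extension of a partial connection on line bundle}, which is stated and proved only for line bundles, even though the corollary concerns reflexive sheaves of arbitrary rank; your reduction to the generic points of the components of $D$ --- where a torsion-free sheaf on a smooth variety is locally free, and where $\F$-invariance of a divisor can be tested --- followed by the matrix identity $f\,d\Theta - df\wedge\Theta + \Theta\wedge\Theta = 0$ and the observation that contracting with $v\in T_{\F}$ and reducing modulo $(f)$ isolates $v(f)\,\theta_{ij}$, is exactly the higher-rank generalization that the paper's citation implicitly presupposes. (Note that one cannot shortcut this by passing to $\det\hatnabla$ and invoking the rank-one proposition, since the trace of the connection matrix may have a strictly smaller polar divisor.) So your argument is not merely equivalent: it supplies a detail the paper leaves unaddressed, and it buys the corollary in the generality in which it is actually stated.

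One small point to tidy: if a component $D_1$ occurs in the polar divisor with multiplicity $m>1$, you cannot write $A=(\theta_{ij}/f)$ with $f$ prime and $\theta_{ij}$ holomorphic. Either take $f$ to be the full local equation $g^m$ (as the paper does in the rank-one case) and conclude from $v(g^m)=m\,g^{m-1}v(g)$, or run the same congruence to get $v(g)\,\Theta\in (g)\cdot\Omega_X^1$ after cancelling $g^{m-1}$. This does not affect the structure of the argument.
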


\section{Codimension one foliations on the projective space with no global foliated 1-forms}\label{S: foliations with no global foliated 1-forms}

This section is dedicated to the proof of Theorem \ref{THM: mendson-pereira conjecture}.

\subsection{Unstable rank two sheaves with flat partial connections}

In order to prove the first part of Theorem \ref{THM: mendson-pereira conjecture}, we first need to discuss flat partial connections on rank two unstable sheaves. Recall a reflexive sheaf $\mcE$ on $\Pj^n$ is \emph{stable} if, for every coherent subsheaf $\mcE_0 \subsetneq \mcE$, we have the inequality
\[
\frac{c_1(\mcE_0)}{\mathrm{rk} (\mcE_0)} <  \frac{c_1(\mcE)}{\mathrm{rk}(\mcE)}.
\]
Otherwise, we say that $\mcE$ is \emph{unstable}. See \cite[Section 3]{hartshorne-80-zbMATH03670590} for the basic properties of stable sheaves. 

\begin{lemma}\label{L: unstable rank 2 sheaf with partial connection}
    Let $\F$ be a foliation on $\Pj^n$ with $H^0(\Pj^n, \CTF^1)=0$, and let $(\mcE,\nabla)$ be a flat partial connection on a rank $2$ reflexive sheaf. Let $\mcL\subset \mcE$ be a saturated invertible subsheaf of $\mcE$. If $2c_1(\mcL) \ge c_1(\mcE)$, then $\nabla$ induces a flat partial connection on $\mcL$, that is, $\mc L$ admits a flat partial connection such that the inclusion is a horizontal morphism.
\end{lemma}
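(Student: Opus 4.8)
The plan is to show that the \emph{second fundamental form} of the inclusion $\mcL\subset\mcE$ vanishes; granting this, the partial connection that $\nabla$ induces on $\mcL$ is automatically flat, which gives the conclusion.

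First I would put $\mcQ:=\mcE/\mcL$, which is torsion-free of rank one because $\mcL$ is saturated, and consider the exact sequence $0\to\mcL\to\mcE\xrightarrow{q}\mcQ\to 0$. Composing $\nabla|_{\mcL}$ with the natural map $\CTF^1(\mcE)\to\CTF^1(\mcQ)$ induced by $q$ yields a $\C$-linear map $\psi\colon\mcL\to\CTF^1(\mcQ)$, $\psi(s)(v)=q(\nabla_v(s))$. The Leibniz term $v(f)\,s$ is annihilated by $q$ since $\mcL=\ker q$, so $\psi$ is $\germe_{\Pj^n}$-linear. Because $\CTF^1(-)=\mcHom_{\germe_{\Pj^n}}(T_{\F},-)$ is left exact, $\CTF^1(\mcL)=\ker\big(\CTF^1(\mcE)\to\CTF^1(\mcQ)\big)$; hence $\psi\equiv 0$ if and only if $\nabla(s)\in\CTF^1(\mcL)$ for every local section $s$ of $\mcL$, that is, if and only if $\nabla$ restricts to a partial connection $\nabla_{\mcL}$ on $\mcL$ for which the inclusion $\mcL\hookrightarrow\mcE$ is horizontal. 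Thus the lemma reduces to the vanishing $\psi=0$.

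To prove $\psi=0$ I would first locate it. Since $\mcL$ is invertible, $\psi$ is a global section of $\mcHom(\mcL,\CTF^1(\mcQ))\cong\CTF^1(\mcQ\otimes\mcL^{-1})$. The sheaf $\mcQ\otimes\mcL^{-1}$ is torsion-free of rank one on $\Pj^n$, so it injects into its reflexive hull, which is a line bundle of degree $c_1(\mcQ\otimes\mcL^{-1})=c_1(\mcE)-2c_1(\mcL)$, and this degree is $\le 0$ precisely by the hypothesis $2c_1(\mcL)\ge c_1(\mcE)$; write it as $-m$ with $m\ge 0$. Applying the left-exact functor $\CTF^1(-)$ to the inclusions $\mcQ\otimes\mcL^{-1}\hookrightarrow\germe_{\Pj^n}(-m)\hookrightarrow\germe_{\Pj^n}$, together with $\CTF^1(\germe_{\Pj^n}(-m))=\CTF^1\otimes\germe_{\Pj^n}(-m)$ and the fact that $\CTF^1\otimes\germe_{\Pj^n}(-m)\hookrightarrow\CTF^1$ (valid since $\CTF^1$ is torsion-free and $m\ge 0$), I obtain an injection
\[
\CTF^1(\mcQ\otimes\mcL^{-1})\hookrightarrow\CTF^1.
\]
Passing to global sections and invoking the hypothesis $H^0(\Pj^n,\CTF^1)=0$ then forces $\psi=0$. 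This is the heart of the argument.

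Finally, I would check that $\nabla_{\mcL}$ is flat, which is formal: for a local section $s$ of $\mcL$ and $v,w\in T_{\F}$ one has $\nabla_{\mcL,w}(s)=\nabla_w(s)\in\mcL$, so
\[
\bigl[\nabla_{\mcL,v},\nabla_{\mcL,w}\bigr](s)=\bigl[\nabla_v,\nabla_w\bigr](s)=\nabla_{[v,w]}(s)=\nabla_{\mcL,[v,w]}(s)
\]
by the flatness of $\nabla$. The only real subtlety in the whole proof is bookkeeping: $\mcE/\mcL$ and its twists need not be locally free, so one must route the argument through reflexive hulls and through the left-exactness of $\CTF^1(-)$ rather than tensoring the sequence $0\to\mcL\to\mcE\to\mcQ\to 0$ directly; the conceptual point is simply that the semistability-violating Chern class inequality places $\psi$ in the zero group $H^0\big(\Pj^n,\CTF^1\otimes\germe_{\Pj^n}(-m)\big)$ with $m\ge 0$.
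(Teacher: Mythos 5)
Your proposal is correct and follows essentially the same route as the paper: both identify the obstruction as the $\germe_{\Pj^n}$-linear second fundamental form $\mcL\to\CTF^1(\mcE/\mcL)$, pass to the reflexive hull of the rank-one quotient, and kill the resulting section by the degree inequality $c_1(\mcE)-2c_1(\mcL)\le 0$ combined with $H^0(\Pj^n,\CTF^1)=0$. The only difference is cosmetic (you embed $\mcQ\otimes\mcL^{-1}$ into $\germe_{\Pj^n}(-m)$ after forming $\psi$, whereas the paper maps into $\CTF^1(\mcQ^{**})$ from the start), and you additionally spell out the formal flatness check that the paper leaves implicit.
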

\begin{proof}
    The inclusion $\mcL \subset \mcE$ induces the exact sequence
    \[
        0 \rightarrow \mcL \xrightarrow{\iota} \mcE \xrightarrow{\pi} \mc Q \rightarrow 0, 
    \]
    where $\mc Q$ is the quotient of $\mcE$ by $\mcL$. Since $\mc Q$ is torsion-free, the natural morphism $\mc Q \rightarrow \mc Q^{**}$ is injective. Denoting $\mcL' = \mc Q^{**}$, we thus induce a short exact sequence
    \[
        0 \rightarrow \mcL \rightarrow \mcE \rightarrow \mcL'.
    \]
    Applying $\mcHom(T_{\F}, \cdot )$ to the short exact sequence above, we induce the short exact sequence sequence
    \[
        0 \rightarrow \CTF^1(\mcL) \rightarrow \CTF^1(\mcE) \xrightarrow{\pi} \CTF^1(\mc L').
    \]
    Consider the morphism $\phi: \mcL \rightarrow \CTF^1(\mc L')$ defined as the composition
    of the inclusion $\iota: \mcL \rightarrow \mcE$, the connection $\nabla:\mcE \rightarrow \CTF^1(\mcE)$ and the morphism $\pi: \CTF^1(\mcE)\rightarrow  \CTF^1(\mc L')$. The morphism $\phi$ is obviously $\C$-linear, however it is actually $\germe_{\Pj^n}$-linear: for every $f\in \germe_{\Pj^n}$ and $s\in \mcL$,
    \[
    \phi(f \cdot s) =  \pi \circ \nabla (f \cdot s) = \pi(\df(f) \otimes s + f \cdot \nabla(s) ) = f \cdot \phi (s),
    \]
    because $\df(f) \otimes s  \in \CTF^1(\mcL) = \ker(\pi)$. Hence, $\phi \in \Hom_{\germe_{\Pj^n}}(\mcL, \CTF^1(\mc L'))$, and by the natural isomorphism
    \[
    \Hom_{\germe_{\Pj^n}}(\mcL, \CTF^1(\mcL')) = H^0(\Pj^n,\CTF^1 \otimes (\mcL^* \otimes \mcL')),
    \]
    it defines a element on $H^0(\Pj^n,\CTF^1 \otimes (\mcL^* \otimes \mcL'))$. Calculating that 
    \[
    c_1(\mcL^* \otimes \mcL') = c_1(\mcL^*) + c_1(\mcE) - c_1(\mcL) = c_1(\mcE) - 2c_1(\mcL) \le 0,
    \]
    it follows that $H^0(\CTF^1 \otimes (\mcL^* \otimes \mcL'))=0$. Hence, $\phi$ is the zero morphism, and therefore $\nabla: \mcE \rightarrow \CTF^1(\mcE)$ induces a flat partial connection $\nabla': \mcL \rightarrow \CTF^1(\mcL)$ such that the diagram
    \begin{equation*}
    \begin{tikzpicture}
        \matrix(m)[matrix of math nodes, column sep = 2em, row sep = 2em]
        {
        \mcL & \mcE \\
        \CTF^1(\mcL) & \CTF^1(\mcE) \\
        };
        \path[->]
        (m-1-1) edge (m-1-2) edge node[left]{$\nabla'$} (m-2-1)
        (m-1-2) edge node[left]{$\nabla$} (m-2-2)
        (m-2-1) edge (m-2-2)
        ;
    \end{tikzpicture}
    \end{equation*}
    commutes. This concludes the proof of the lemma.
\end{proof}

We will use the lemma above in the following way: if $(\mcE,\nabla)$ is a rank $2$ reflexive sheaf that is unstable in the category of sheaves, then it also unstable in the category of flat partial connections, that is, there exists $(\mcL, \nabla')\subset (\mcE,\nabla)$ such that $c_1(\mcL) \ge 2 c_1(\mcE)$. 

\begin{rmk}
    An interesting problem it to generalize Lemma \ref{L: unstable rank 2 sheaf with partial connection} under more general conditions. That is, determine conditions such that, whenever a unstable sheaf $\mcE$ is endowed with a partial connection $\nabla$, the pair $(\mcE,\nabla)$ is unstable in the category of partial connection as well.  
\end{rmk}

The next proposition is the first part of Theorem \ref{THM: mendson-pereira conjecture}.

\begin{prop}\label{P: foliation with unstable first jet of the Bott connection and without global foliated 1-forms}
    Let $\F$ be a codimension one foliation on $\Pj^n$, $n\ge 3$. Suppose that $H^0(\Pj^n,\CTF^1)=0$. If $\PPjnF^1(\nabla_B)$ is unstable, then $\F$ is transversely affine.
\end{prop}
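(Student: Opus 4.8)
The plan is to extract a \emph{horizontal} meromorphic splitting of the short exact sequence associated to $\PPjnF^1(\nabla_B)$ from a destabilizing sub-line bundle, and then to apply Theorem \ref{T: flat meromorphic extensions} together with the reading of a flat meromorphic extension of the Bott connection as a singular transversely affine structure (see the discussion preceding Corollary \ref{COR: transversely affine structures}). First I would record the numerics. Since $\F$ has codimension one and $\Pj^n$ is smooth, $\CNF$ and $\NF$ are line bundles and $\PPjnF^1(\nabla_B)$ is a reflexive sheaf of rank $2$. By Equation (\ref{Eq: short exact sequence of the first sheaf of transverse jets}) and Proposition \ref{P: codimension one, ideal of persistent singularities} it sits in
\[
0 \to \CNF\otimes\CNF \xrightarrow{\ \iota\ } \PPjnF^1(\nabla_B) \xrightarrow{\ \pi\ } \idealpers(\F)\otimes\CNF \to 0 .
\]
The subscheme cut out by $\idealpers(\F)$ is contained in $\sing(\F)$, hence has codimension at least two, so $c_1\big(\idealpers(\F)\otimes\CNF\big)=c_1(\CNF)$ and $c_1\big(\PPjnF^1(\nabla_B)\big)=3\,c_1(\CNF)$. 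Finally, on $\Pj^n$ one has $\CNF\cong\germe_{\Pj^n}(-(d+2))$ with $d=\deg\F$, so $c_1(\CNF)<0$.

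Now suppose $\PPjnF^1(\nabla_B)$ is unstable. Then there is a saturated invertible subsheaf $\mcL\subset\PPjnF^1(\nabla_B)$ with $2\,c_1(\mcL)\ge c_1\big(\PPjnF^1(\nabla_B)\big)=3\,c_1(\CNF)$, hence $c_1(\mcL)\ge\tfrac32 c_1(\CNF)>2\,c_1(\CNF)$, the last inequality because $c_1(\CNF)<0$. If the composite $\mcL\hookrightarrow\PPjnF^1(\nabla_B)\xrightarrow{\pi}\CNF$ vanished, then $\mcL$ would be contained in $\ker\pi=\iota(\CNF\otimes\CNF)$, forcing $c_1(\mcL)\le 2\,c_1(\CNF)$ --- a contradiction. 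Therefore $\pi|_\mcL$ is a nonzero morphism of line bundles, hence injective, and $\mcL\cong\CNF(-D)$ for an effective divisor $D\ge 0$, with $\pi$ taking the inclusion $\mcL\hookrightarrow\PPjnF^1(\nabla_B)$ to the tautological inclusion $\CNF(-D)\hookrightarrow\CNF$. In other words $\mcL\hookrightarrow\PPjnF^1(\nabla_B)$ is a meromorphic splitting with poles on $D$ of the short exact sequence associated to the first transverse jet of $\nabla_B$.

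It remains to upgrade this to a horizontal splitting and to conclude. By Lemma \ref{L: unstable rank 2 sheaf with partial connection} --- the one place where $H^0(\Pj^n,\CTF^1)=0$ is used --- the connection $\nabla_B^1$ induces a flat partial connection $\nabla'$ on $\mcL$ making $\mcL\hookrightarrow\PPjnF^1(\nabla_B)$ horizontal. Composing with the horizontal projection $\pi$ of Proposition \ref{P: short exact sequence of the sheaf of transverse jets}, the inclusion $\CNF(-D)\hookrightarrow(\CNF,\nabla_B)$ is horizontal for $\nabla'$, so by Lemma \ref{L: flat partial connection factors through F-invariant divisors} the divisor $D$ is $\F$-invariant and $\nabla'$ is the partial connection that $\CNF(-D)$ canonically carries. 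Hence $\mcL=\CNF(-D)\hookrightarrow\PPjnF^1(\nabla_B)$ is a horizontal meromorphic splitting with poles on $D$, and by Theorem \ref{T: flat meromorphic extensions} it corresponds to a flat meromorphic connection $\hatnabla\colon\CNF\to\Omega_{\Pj^n}^1(D)\otimes\CNF$ extending $\nabla_B$ --- that is, to a singular transversely affine structure for $\F$. Therefore $\F$ is transversely affine. The crux of the argument, and the step I expect to be the main obstacle, is $\pi|_\mcL\neq 0$: it rests on the estimate $c_1\big(\PPjnF^1(\nabla_B)\big)>4\,c_1(\CNF)$, hence on the identity $c_1\big(\idealpers(\F)\otimes\CNF\big)=c_1(\CNF)$ (i.e.\ the persistent singular scheme has codimension $\ge 2$) and on the sign $c_1(\CNF)<0$, special to projective space; once $\pi|_\mcL\neq 0$ is in hand, the rest is a matter of threading horizontality through Lemmas \ref{L: unstable rank 2 sheaf with partial connection} and \ref{L: flat partial connection factors through F-invariant divisors}.
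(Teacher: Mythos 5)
Your proposal is correct and follows essentially the same route as the paper: instability yields a saturated destabilizing line subbundle $\mcL$, Lemma \ref{L: unstable rank 2 sheaf with partial connection} (using $H^0(\Pj^n,\CTF^1)=0$) makes the inclusion horizontal, the Chern class estimate $c_1(\mcL)\ge \tfrac{3}{2}c_1(\CNF)>2c_1(\CNF)$ forces the projection to $\CNF$ to be nonzero so that $\mcL\simeq\CNF(-D)$ with $D$ $\F$-invariant by Lemma \ref{L: flat partial connection factors through F-invariant divisors}, and Theorem \ref{T: flat meromorphic extensions} converts the resulting horizontal splitting into a flat meromorphic extension of $\nabla_B$, i.e.\ a transversely affine structure. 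The only difference is the (immaterial) order in which you establish horizontality and non-vanishing of $\pi|_{\mcL}$.
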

\begin{proof}
    Since $\PPjnF^1(\nabla_B)$ is unstable, there exists an saturated invertible subsheaf $\mcL \subset \PPjnF^1(\nabla_B)$ with $2c_1(\mcL) \ge c_1(\PXF^1(\nabla_B))$. By Lemma \ref{L: unstable rank 2 sheaf with partial connection}, the connection $\nabla_B^1$ induces a flat partial connection $\nabla$ on $\mcL$ such that the inclusion is horizontal. 

    Let $\iota: (\mcL,\nabla) \rightarrow (\CNF, \nabla_B)$ be the the morphism induced by the projection $\pi: (\PPjnF^1(\nabla),\nabla_B) \rightarrow (\CNF,\nabla_B)$. We claim that $\iota$ is non-vanishing. Otherwise, the inclusion $\mcL \subset  \PPjnF^1(\nabla_B)$ would factorize by the inclusion  $\CNF^{\otimes 2}\subset \PPjnF^1$, that is, it would induce an inclusion $\mcL\subset \CNF^{\otimes 2}$. However,
    \[
    c_1(\mcL) > \frac{c_1(\PPjnF^1(\nabla_B))}{2} = \frac{3 c_1(\CNF)}{2} > 2c_1(\CNF),
    \]
    where in the last inequality we use that $c_1(\CNF)<0$. This leads to a contradiction, and thus $\psi\not\equiv 0$. 
    
    Hence, $\psi$ defines an isomorphism $\mcL \simeq \CNF(-D)$, with $D\ge 0$ $\F$-invariant (see Lemma \ref{L: flat partial connection factors through F-invariant divisors}), and the inclusion $\mcL \subset \PPjnF^1(\nabla_B)$ induces a horizontal splitting of the short exact sequence of $\PPjnF^1(\nabla_B)$. Finally, by Theorem \ref{T: flat meromorphic extensions}, this induces a flat meromorphic extension of $\CNF$. Therefore, $\F$ is transversely affine.
\end{proof}

\begin{rmk}
    At the end of the proof of the proposition above, we can use \cite[Proposition 12.2]{mendson-pereira-22-arXiv:2207.08957} instead of Theorem \ref{T: flat meromorphic extensions} to guarantee that the foliation is indeed virtually transversely additive.
\end{rmk}

\subsection{Rational foliations}

Recall that a foliation $\F$ on $\Pj^n$ is \emph{rational} if there exists irreducible homogeneous polynomials $F_1,F_2\in \C[x_0,\ldots,x_n]$, $\deg(F_1) = d_1$ and $\deg(F_2)=d_2$, such that:
\begin{enumerate}[label = (\roman*)]
    \item $F_1$ and $F_2$ are in general position, that is,
        \begin{equation*}\label{E: transverse hyperplanes}
            dF_1 \wedge d F_2(p) \neq 0, \forall p\in \{ Z_1=Z_2=0\}; \text{ and } 
        \end{equation*}
    \item $(F_1^{d_2}:F_2^{d_1}): \Pj^n \dashrightarrow \Pj^1$ defines $\F$.
\end{enumerate}

In terms of homogeneous 1-form, the rational foliation defined by $F_1^{d_2}/F_2^{d_1}$ is a foliation of $d_1+d_2-2$ foliation defined by the homogeneous 1-form
\[
\omega = d_1F_1dF_2 - d_2F_2dF_1 \in \Omega_{\Pj^n}^1(d_1+d_2)
\]
Hence, the Bott connection on the conormal $\F$ admits two obvious flat meromorphic extensions: the first one is $\nabla^1: \CNF \rightarrow \Omega_X^1(D_1) \otimes \CNF$ induced by the expression
    \[
    d\omega  = \frac{d_1+d_2}{d_1} \frac{dF_1}{F_1} \wedge \omega,
    \]
and the second one is $\nabla^2: \CNF \rightarrow \Omega_X^1(D_2) \otimes \CNF$ induced by
    \[
    d\omega = \frac{d_1+d_2}{d_2} \frac{dF_2}{F_2} \wedge \omega.
    \]

Let $D_1 = \{F_1=0\}, D_2 = \{F_2=0\} \subset \Pj^n$ be the respective $\F$-invariant hypersurfaces of $\Pj^n$. By Theorem \ref{T: flat meromorphic extensions}, the flat meromorphic extensions $\nabla^i$ of $\nabla_B$ induce horizontal meromorphic splittings $\sigma_i: \CNF(-D_i) \rightarrow \PPjnF^1(\nabla_B)$ of the exact sequence of the first jet of $\nabla_B$ on $\CNF$, given respectively by 
    \[
        \sigma_1(F_1\cdot \omega) = -(d_1+d_2)/d_1 \cdot dF_1\otimes \omega + F_1\otimes \omega \in \PPjnF^1(\nabla),
    \]
and 
    \[
        \sigma_2(F_2\cdot \omega) = -(d_1+d_2)/d_2 \cdot dF_2 \otimes \omega + F_2\otimes \omega \in \PPjnF^1(\nabla).
    \]
Considering these morphisms, we prove the following proposition.

\begin{prop}
    Let $\F$ be a codimension one rational foliation on $\Pj^n$, and let $\nabla_B$ be the Bott connection on $\CNF$. Then, $(\PPjnF^1(\nabla_B), \nabla_B^1)$ is the direct sum of two flat partial connections on line bundles. 
\end{prop}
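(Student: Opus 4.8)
The plan is to show that the two horizontal meromorphic splittings
$\sigma_1\colon \CNF(-D_1)\to\PPjnF^1(\nabla_B)$ and
$\sigma_2\colon \CNF(-D_2)\to\PPjnF^1(\nabla_B)$ displayed above assemble into an isomorphism
\[
\Phi=\sigma_1\oplus\sigma_2\colon \CNF(-D_1)\oplus\CNF(-D_2)\;\xrightarrow{\ \sim\ }\;\PPjnF^1(\nabla_B).
\]
Granting this, the conclusion is immediate: since $D_1$ and $D_2$ are $\F$-invariant, Lemma~\ref{L: flat partial connection factors through F-invariant divisors} puts on each $\CNF(-D_i)$ the flat partial connection induced by $\nabla_B$, and since each $\sigma_i$ is horizontal, $\Phi$ is an isomorphism of flat partial connections; thus $(\PPjnF^1(\nabla_B),\nabla_B^1)$ is the direct sum of the two flat partial connections on the line bundles $\CNF(-D_1)$ and $\CNF(-D_2)$.

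The key computation concerns the difference $\sigma_1-\sigma_2$. Since $F_1$ and $F_2$ have no common factor, $\CNF(-D_1)\cap\CNF(-D_2)=\CNF(-D_1-D_2)$ as subsheaves of $\CNF$, and on this subsheaf $\pi\circ\sigma_1$ and $\pi\circ\sigma_2$ both equal the canonical inclusion into $\CNF$; hence $\sigma_1-\sigma_2$ restricts to an $\germe_{\Pj^n}$-linear morphism $\CNF(-D_1-D_2)\to\ker(\pi)=\CNF^{\otimes 2}$. Feeding the explicit formulas for $\sigma_1,\sigma_2$ into this restriction and using $\omega=d_1F_1\,dF_2-d_2F_2\,dF_1$, a short calculation gives
\[
(\sigma_1-\sigma_2)(F_1F_2\,\omega)=\frac{d_1+d_2}{d_1d_2}\,\omega\otimes\omega,
\]
i.e.\ $\sigma_1-\sigma_2$ carries a local generator of $\CNF(-D_1-D_2)$ to a nonzero constant multiple of a local generator of $\CNF^{\otimes 2}$. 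A nonzero morphism of line bundles has cokernel supported on a divisor; since that cokernel is here supported inside $\sing(\F)$, which has codimension $\ge 2$, it must vanish, so $\sigma_1-\sigma_2\colon\CNF(-D_1-D_2)\xrightarrow{\ \sim\ }\ker(\pi)$.

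Injectivity of $\Phi$ then follows at once: if $\sigma_1(a_1)+\sigma_2(a_2)=0$, applying $\pi$ gives $a_1+a_2=0$ in $\CNF$, so $a_1=-a_2\in\CNF(-D_1-D_2)$ satisfies $(\sigma_1-\sigma_2)(a_1)=0$, hence $a_1=a_2=0$. For surjectivity, the previous step gives $\ker(\pi)\subset\mathrm{im}(\Phi)$, so $\pi$ induces an isomorphism $\PPjnF^1(\nabla_B)/\mathrm{im}(\Phi)\cong\mathrm{im}(\pi)/\pi(\mathrm{im}(\Phi))$; moreover $\pi(\mathrm{im}(\Phi))=\CNF(-D_1)+\CNF(-D_2)=(F_1,F_2)\cdot\CNF$, so $\mathrm{coker}(\Phi)$ is a subquotient of $\CNF/(F_1,F_2)\cdot\CNF$, hence supported on the codimension-two set $V(F_1,F_2)$. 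Finally $\PPjnF^1(\nabla_B)$ is reflexive (it is a dual sheaf) and $\CNF(-D_1)\oplus\CNF(-D_2)$ is locally free, so $\Phi$ is an injection of reflexive sheaves of the same rank whose cokernel is supported in codimension $\ge 2$; since a reflexive sheaf on a smooth variety equals the pushforward of its restriction to the complement of any such locus, $\Phi$ is an isomorphism, completing the argument.

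The main obstacle is the surjectivity of $\Phi$: injectivity and compatibility with the connections are essentially formal once the identity $(\sigma_1-\sigma_2)(F_1F_2\,\omega)=\tfrac{d_1+d_2}{d_1d_2}\,\omega\otimes\omega$ is established, whereas excluding a cokernel genuinely uses both the reflexivity of $\PPjnF^1(\nabla_B)$ and the fact that the base locus $V(F_1,F_2)$ has codimension two (equivalently, that $\mathrm{im}(\pi)=\idealpers(\F)\otimes\CNF$ with $\idealpers(\F)=(F_1,F_2)$, via Proposition~\ref{P: codimension one, ideal of persistent singularities}).
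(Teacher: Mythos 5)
Your proposal is correct and follows essentially the same route as the paper: the same splittings $\sigma_1,\sigma_2$, the same key identity $(\sigma_1-\sigma_2)(F_1F_2\,\omega)=\tfrac{d_1+d_2}{d_1d_2}\,\omega\otimes\omega$, and the same reliance on $\idealpers(\F)=\ideal{F_1,F_2}$ to control $\mathrm{im}(\pi)$. The only difference is cosmetic: the paper verifies surjectivity by an explicit element-by-element computation, whereas you package it as a cokernel-support/reflexivity argument (which in fact already yields zero cokernel directly, since $\pi(\mathrm{im}\,\Phi)=(F_1,F_2)\cdot\CNF=\mathrm{im}(\pi)$).
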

\begin{proof}
    Following the notation above, let us prove that the horizontal morphism
    \[
    \sigma_1 \oplus \sigma_2 : \CNF(-D_1) \oplus \CNF(-D_2) \rightarrow \PPjnF^1(\nabla_B).
    \]
    is an isomorphism. Recall we have the natural short exact sequence
    \[
        0 \rightarrow \CNF \otimes \CNF \rightarrow \PPjnF^1(\nabla_B) \xrightarrow{\pi} \idealpers \otimes \CNF \rightarrow 0.
    \]
    By \cite[Lemma 2.1]{suwa-83-zbMATH03772606}, we have that $\idealpers(\F) = \ideal{F_1,F_2} \subset \germe_{\Pj^n}$. Thus, for every $\xi \in \PPjnF^1(\nabla)$, there exists unique $a,b\in\germe_{\Pj^n}$ such that $\pi(\xi) = (aF_1 + bF_2)\cdot \omega$. Hence,
    \[
    \pi(\eta - \sigma_1(aF_1\cdot \omega) - \sigma_2(bF_2\cdot \omega) ) =0,
    \]
    and thus $\eta - \sigma_1(aF_1\cdot \omega) - \sigma_2(bF_2\cdot \omega) \in \ker(\pi) = \CNF \otimes \CNF$, that is, there exists $c\in \germe_{\Pj^n}$ such that
    \begin{equation*}
        \begin{split}
            \eta - \sigma_1(aF_1\cdot \omega) - \sigma_2(bF_2\cdot \omega) & = c\cdot \omega \otimes \omega \\
        & = c \cdot (d_1F_1dF_2 - d_2F_2dF_1) \otimes \omega \\
        & = \frac{d_1d_2\cdot c}{d_1+d_2}\cdot \sigma_1(F_1F_2\cdot \omega) - \frac{d_1d_2 \cdot c}{d_1+d_2}\cdot \sigma_2(F_1F_2\cdot \omega).    
        \end{split}
    \end{equation*}
    Hence,
    \[
    \eta = \sigma_1\left( \left(a+\frac{d_1d_2\cdot c \cdot F_2}{d_1+d_2}\right)F_1\cdot \omega\right) + \sigma_2\left( \left(b-\frac{d_1d_2\cdot c \cdot F_1}{d_1+d_2}\right)F_2\cdot \omega\right), 
    \]
    that is, $\eta \in \sigma_1(\CNF(-D_1)) + \sigma_2(\CNF(-D_2))$. Moreover, the calculations above show us that this is the only way we can write $\eta$ in terms of the image of $\sigma_1 \oplus \sigma_2$. Therefore, $\sigma_1\oplus \sigma_2$ is an isomorphism. This concludes the proof.
\end{proof}

When $n\ge 3$, we could indeed guarantee the reciprocal of the proposition above, which corresponds to the second part of Theorem \ref{THM: mendson-pereira conjecture}.

\begin{prop}\label{P: foliation with decomposable first jet of the Bott connection and without global foliated 1-forms}
    Let $\F$ be a codimension one foliation on $\Pj^n$, $n\ge 3$. Suppose $H^0(\Pj^n,\CTF^1)=0$. If $(\PPjnF^1(\nabla_B), \nabla^1)$ is the direct sum of two flat partial connections on line bundles, then $\F$ is a rational foliation.
\end{prop}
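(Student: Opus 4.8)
The plan is to extract from the decomposition two distinct $\F$-invariant hypersurfaces, and then to invoke the theorem of Mendson and Pereira (\cite[Proposition 12.3]{mendson-pereira-22-arXiv:2207.08957}): a codimension one foliation on $\Pj^n$ with $n\ge 3$ and no nonzero global foliated $1$-forms that admits at least two distinct $\F$-invariant hypersurfaces is rational. So the entire task reduces to producing those hypersurfaces.

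First I would record the relevant Chern-class numerology. The persistent singular scheme of $\F$ is contained in $\sing(\F)$, hence has codimension $\ge 2$, so the short exact sequence $0\to\CNF\otimes\CNF\to\PPjnF^1(\nabla_B)\xrightarrow{\pi}\idealpers(\F)\otimes\CNF\to 0$ (Equation (\ref{Eq: short exact sequence of the sheaf of transverse jets}) together with Proposition \ref{P: codimension one, ideal of persistent singularities}) yields $c_1(\PPjnF^1(\nabla_B))=3\,c_1(\CNF)$; recall also that $c_1(\CNF)<0$ on $\Pj^n$. Writing the decomposition as $(\PPjnF^1(\nabla_B),\nabla_B^1)\simeq(\mcL_1,\nabla_1)\oplus(\mcL_2,\nabla_2)$ with line bundles $\mcL_i$, I would let $j_i\colon\mcL_i\hookrightarrow\PPjnF^1(\nabla_B)$ be the (horizontal) inclusions of the two summands, and compose with the horizontal projection $\pi$ of Proposition \ref{P: short exact sequence of the sheaf of transverse jets} to obtain horizontal morphisms $\pi\circ j_i\colon\mcL_i\to\CNF$.

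Next I would show that each $\pi\circ j_i$ is nonzero and cuts out a nonzero $\F$-invariant divisor. Suppose $\pi\circ j_1=0$: then $\mcL_1\subset\ker\pi=\CNF^{\otimes 2}$, while $\pi\circ j_2\colon\mcL_2\hookrightarrow\CNF$ is nonzero (its image is all of $\mathrm{im}\,\pi$), and comparing first Chern classes with $c_1(\mcL_1)+c_1(\mcL_2)=3\,c_1(\CNF)$ forces $\mcL_2\simeq\CNF$ and $\idealpers(\F)=\germe_{\Pj^n}$; then $j_2$ is a horizontal holomorphic splitting of $0\to\CNF^{\otimes2}\to\PPjnF^1(\nabla_B)\xrightarrow{\pi}\CNF\to0$, so by Theorem \ref{T: flat meromorphic extensions} (with $D=0$) the Bott connection would extend to a flat holomorphic connection on the line bundle $\CNF$, which is impossible since $c_1(\CNF)<0$ and $\Pj^n$ is simply connected. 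Hence $\pi\circ j_i\neq 0$; being a nonzero morphism of line bundles over an integral scheme it is injective, so $\mcL_i\simeq\CNF(-D_i)$ for an effective divisor $D_i$, which is $\F$-invariant by Lemma \ref{L: flat partial connection factors through F-invariant divisors}. The same obstruction (if $D_i=0$ then $\mcL_i\simeq\CNF$, so $\pi\circ j_i$ is an isomorphism onto $\mathrm{im}\,\pi$, forcing $\idealpers(\F)=\germe_{\Pj^n}$ and then a holomorphic flat connection on $\CNF$) shows $D_i\ne 0$.

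Finally I would rule out the degenerate possibility that $D_1$ and $D_2$ are both supported on a single irreducible $\F$-invariant hypersurface $H$, say $D_i=a_iH$ with $a_i\ge 1$ and (after relabelling) $a_1\le a_2$. Reading $\ker\pi$ off the internal direct sum, $\xi=j_1(x)+j_2(y)$ lies in $\ker\pi$ exactly when $x+y=0$ in $\CNF$, which forces $x=-y\in\CNF(-a_2H)$; hence $\ker\pi=(j_2-j_1)\bigl(\CNF(-a_2H)\bigr)$, and since $j_2-j_1$ is injective this gives $\CNF^{\otimes2}\simeq\CNF(-a_2H)$, i.e.\ $a_2\deg H=-c_1(\CNF)$. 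On the other hand $c_1(\mcL_1)+c_1(\mcL_2)=3\,c_1(\CNF)$ gives $(a_1+a_2)\deg H=-c_1(\CNF)$; subtracting, $a_1\deg H=0$, contradicting $a_1\ge 1$. Therefore the irreducible components of $D_1+D_2$ are $\F$-invariant hypersurfaces and at least two of them are distinct, so \cite[Proposition 12.3]{mendson-pereira-22-arXiv:2207.08957} shows $\F$ is a rational foliation. I expect the main obstacle to be the third paragraph: pinning down the maps $\pi\circ j_i$ precisely enough to guarantee that they produce \emph{nonzero} $\F$-invariant divisors, which rests on the numerology $c_1(\PPjnF^1(\nabla_B))=3\,c_1(\CNF)$ and on the non-existence of a holomorphic flat connection on the negative line bundle $\CNF$; the kernel computation ruling out a common hypersurface is the other delicate point.
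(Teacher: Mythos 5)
Your proof is correct and follows essentially the same route as the paper's: identify the two summands as $\CNF(-D_i)$ with $D_i$ effective and $\F$-invariant, show the $D_i$ produce two distinct invariant hypersurfaces, and invoke \cite[Proposition 12.3]{mendson-pereira-22-arXiv:2207.08957}. The only divergence is in the distinctness step --- the paper reads it off from $\idealpers(\F)\otimes\CNF=\CNF(-D_1)+\CNF(-D_2)$ together with $\codim\Spec(\idealpers(\F))\ge 2$, whereas you compute $\ker\pi$ and compare first Chern classes --- and your handling of the degenerate cases $\pi\circ j_i=0$ and $D_i=0$ is in fact more explicit than the paper's, which delegates them to the proof of Proposition~\ref{P: foliation with unstable first jet of the Bott connection and without global foliated 1-forms}.
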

\begin{proof}
    Let $(\mcL^1,\nabla^1)$ and $(\mcL^2,\nabla^2)$ be flat partial connections on line bunldes such that $(\PPjnF^1(\nabla_B), (\nabla_B)^1) = (\mcL^1,\nabla^1) \oplus (\mcL^2,\nabla^2)$. As in the proof of Proposition \ref{P: foliation with unstable first jet of the Bott connection and without global foliated 1-forms}, 
    we could identify each $\mcL^i$ as a subsheaf of $\CNF$, and there exists $\F$-invariant effective divisors $D_i$ such that $\mcL^i \simeq \CNF(-D_i)$ and the induced morphism $\CNF(-D_i) \rightarrow \CNF$ is the natural inclusion.  Additionally, through this isomorphism, the natural projection $\pi: \PXF^1(\nabla_B) \rightarrow \CNF$ becomes the natural morphism
    \[
    \begin{split}
        \CNF(-D_1)\oplus \CNF(-D_2) & \rightarrow \CNF \\
        \omega_1 \oplus \omega_2 \mapsto \omega_1+\omega_2.
    \end{split}
    \]
    Considering the short exact sequence associated to $\PXF^1(\nabla_B)$, we have
    \begin{equation*}
        \begin{split}
            \idealpers(\F) \otimes \CNF & = \rm{im}(\pi: \PPjnF^1(\nabla_B) \rightarrow \CNF) \\
            & = \rm{im}(\CNF(-D_1) \oplus \CNF(-D_2) \rightarrow \CNF) \\
            & =\CNF(-D_1) + \CNF(-D_2) \subset \CNF.
        \end{split}
    \end{equation*}
    Since $\codim  \mathrm{Spec}(\idealpers(\F)) \ge 2$, it follows that the supports of $D_1$ and $D_2$ consist of distinct irreducible hypersurfaces. Therefore, by \cite[Proposition 12.3]{mendson-pereira-22-arXiv:2207.08957}, the foliation is rational.
\end{proof}

The preceding discussion allows us to reformulate \cite[Conjecture 12.1]{mendson-pereira-22-arXiv:2207.08957} as follows.

\begin{conj}\label{Conjecture: equivalent to Wodson conjecture}
    Let $\F$ be a codimension one foliation on $\Pj^n$, $n\ge 3$. Let $\nabla_B$ be the Bott connection on the conormal sheaf $\CNF$. If $H^0(\Pj^n,\CTF^1)=0$, then  $\PPjnF^1(\nabla_B)$ is isomorphic to the direct sum of two flat partial connections on line bundles.
\end{conj}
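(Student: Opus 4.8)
This conjecture is, by Proposition \ref{P: foliation with decomposable first jet of the Bott connection and without global foliated 1-forms} together with the converse established just before it, strictly equivalent to \cite[Conjecture 12.1]{mendson-pereira-22-arXiv:2207.08957}; consequently no complete proof is available, and what follows is a plan together with an indication of where the essential difficulty lies. The implication that is already settled is that a rational foliation has decomposable first transverse jet, so the content is to start from the vanishing $H^0(\Pj^n,\CTF^1)=0$ and produce an isomorphism of flat partial connections
\[
    \big(\PPjnF^1(\nabla_B),\nabla_B^1\big) \;\simeq\; (\mc L_1,\nabla_1) \oplus (\mc L_2,\nabla_2).
\]
By Corollary \ref{Cor: image in the short exact sequence of transverse jets}, Proposition \ref{P: codimension one, ideal of persistent singularities} and Lemma \ref{L: flat partial connection factors through F-invariant divisors}, for such a splitting the composite $\mc L_i \hookrightarrow \PPjnF^1(\nabla_B) \xrightarrow{\pi} \CNF$ is either zero or identifies $\mc L_i$ with $\CNF(-D_i)$ for an $\F$-invariant divisor $D_i$; and since $\idealpers(\F)\neq 0$ the projection $\pi$ cannot vanish on both $\mc L_i$, so at least one --- and, by a numerical argument in the spirit of Proposition \ref{P: foliation with unstable first jet of the Bott connection and without global foliated 1-forms}, in fact both unless $\F$ has no persistent singularities --- of the $\mc L_i$ is of this form. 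The plan therefore reduces the conjecture to \textbf{producing two distinct $\F$-invariant hypersurfaces in general position}: once these are available, \cite[Proposition 12.3]{mendson-pereira-22-arXiv:2207.08957} gives that $\F$ is rational, and the decomposition of $\PPjnF^1(\nabla_B)$ follows as in the proof of the converse.

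The first step is thus to secure \emph{one} $\F$-invariant hypersurface, and I would split into two cases according to the stability of the rank-two reflexive sheaf $\PPjnF^1(\nabla_B)$. If it is unstable, Theorem \ref{THM: mendson-pereira conjecture}(A) already gives that $\F$ is virtually transversely additive, and I would then invoke the structure theory of transversely affine and transversely projective foliations on projective spaces (as in \cite{cerveau-lins-neto-loray-pereira-touzet-07-zbMATH05202834, loray-pereira-touzet-16-zbMATH06670708}) to extract the remaining invariant hypersurface, or directly a second horizontal meromorphic splitting in the sense of Theorem \ref{T: flat meromorphic extensions}. If $\PPjnF^1(\nabla_B)$ is semistable, I would instead work with the defining short exact sequence
\[
    0 \to \CNF^{\otimes 2} \to \PPjnF^1(\nabla_B) \xrightarrow{\pi} \idealpers(\F)\otimes\CNF \to 0
\]
and exploit the \emph{flatness} of $\nabla_B^1$, which has not yet been used: restricted to a very general leaf of $\F$, $\nabla_B^1$ is a rank-two local system, and reducibility of its monodromy would, after a reflexivity argument of Hartshorne type (\cite[Proposition 1.1]{hartshorne-80-zbMATH03670590}) globalising the leafwise splitting to a splitting of reflexive sheaves on $\Pj^n$, yield the desired decomposition. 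So the intermediate target becomes: under $H^0(\Pj^n,\CTF^1)=0$ and $n\ge 3$, the leafwise monodromy of the flat partial connection $\nabla_B^1$ is reducible.

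The main obstacle is precisely this monodromy/invariant-hypersurface step, which \emph{is} the Mendson--Pereira conjecture: there is no formal reason forcing a codimension one foliation on $\Pj^n$ with $H^0(\Pj^n,\CTF^1)=0$ to possess an invariant hypersurface, and the available evidence is only the partial results of \cite{mendson-pereira-22-arXiv:2207.08957} together with the classification of low-degree and transversely homogeneous foliations. The concrete sub-goal where I would expect to get stuck is an inductive step in $n$: restricting $\F$ to a generic linear subspace $\Pj^m\subset\Pj^n$ with $3\le m<n$ (or to the generic member of a pencil of hyperplane sections), one must check that the restricted foliation still carries no global foliated $1$-form, control its behaviour along $\sing(\F)$, and then combine an index computation of Baum--Bott type with $\codim\,\Spec(\idealpers(\F))\ge 2$ to force an invariant hypersurface --- with the case $m=3$ remaining as a genuinely hard base case, since the statement fails on $\Pj^2$. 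Until such an argument is found, Conjecture \ref{Conjecture: equivalent to Wodson conjecture} stands as a reformulation whose value is to repackage the existence of invariant hypersurfaces and transversely affine structures as a single intrinsic statement about the rank-two reflexive sheaf $\PPjnF^1(\nabla_B)$ equipped with its canonical flat partial connection.
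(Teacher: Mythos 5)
This statement is a \emph{conjecture} in the paper, not a theorem: the paper offers no proof of it, only the surrounding Propositions \ref{P: foliation with unstable first jet of the Bott connection and without global foliated 1-forms} and \ref{P: foliation with decomposable first jet of the Bott connection and without global foliated 1-forms} (together with the decomposability of $\PPjnF^1(\nabla_B)$ for rational foliations), which show that, under $H^0(\Pj^n,\CTF^1)=0$, decomposability of $(\PPjnF^1(\nabla_B),\nabla_B^1)$ is equivalent to $\F$ being rational, i.e.\ that the conjecture is a reformulation of \cite[Conjecture 12.1]{mendson-pereira-22-arXiv:2207.08957}. You correctly recognize this, and your identification of the equivalence and of where the genuine difficulty sits (producing $\F$-invariant hypersurfaces, for which there is no known formal mechanism) matches exactly the role the statement plays in the paper. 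So there is nothing to compare your attempt against: you have not claimed a proof, and the paper does not contain one.

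Two small caveats on the speculative part of your plan, so that it is not mistaken for more than it is. First, in the unstable case, passing from ``virtually transversely additive'' to a second invariant hypersurface or a second horizontal splitting is itself not automatic; the structure theorems of \cite{cerveau-lins-neto-loray-pereira-touzet-07-zbMATH05202834, loray-pereira-touzet-16-zbMATH06670708} require hypotheses (e.g.\ on the polar divisor or on the existence of invariant hypersurfaces) that are part of what one is trying to establish. Second, the leafwise-monodromy reducibility step in the semistable case is essentially a restatement of the conjecture in different language rather than a reduction of it: reducibility of the flat partial connection $\nabla_B^1$ along leaves, globalized, \emph{is} the decomposability (or at least the instability in the category of flat partial connections) that the conjecture asserts. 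Your honest framing of these as the open obstacles is appropriate.
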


\section{Codimension one foliation on the projective plane with singularities of pencil type} \label{S: Codimension one foliation on the projective plane with certain singularities}

In this section, we aim to prove Theorems \ref{THM: pencil 1} and \ref{THM: pencil 2} about foliations on $\Pj^2$ with singularities of pencil type. The proofs of both theorems are very similar and follow essentially from the following lemma.

\begin{lemma}\label{L: extension of the Bott connection with low degree and certain singularities}
    Let $\F$ be a foliation on $\Pj^2$ with singularities of pencil type, and suppose there exists a meromorphic extension $\hatnabla: \CNF \rightarrow \Omega_X^1(D) \otimes \CNF$ with poles on $D\ge 0$ of the Bott connection. If  $\deg(D) \le c_1(N_{\F})/2$, then $\hatnabla$ is flat. 
\end{lemma}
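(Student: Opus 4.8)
The plan is to reduce flatness of $\hatnabla$ to the vanishing of a single curvature $2$-form, and then to kill that form by a dimension count, using the pencil-type normal forms only to pin down where the curvature vanishes. Since $\Pj^2$ is a smooth surface, the reflexive sheaf $\CNF$ is invertible and $\mcEnd(\CNF)\cong\germe_{\Pj^2}$. Fixing a twisted $1$-form $\omega\in H^0(\Pj^2,\Omega^1_{\Pj^2}(c_1(\NF)))$ defining $\F$, the discussion following Equation~\eqref{Eq: transversely affine structure} identifies $\hatnabla$ with a rational $1$-form $\theta$ such that $(\theta)_\infty\le D$ and $d\omega=\theta\wedge\omega$ (so that $\hatnabla(\omega)=\theta\otimes\omega$); its curvature is then $\kappa:=d\theta$, a global section of $\Omega^2_{\Pj^2}(2D)\otimes\mcEnd(\CNF)\cong\Omega^2_{\Pj^2}(2D)\cong\germe_{\Pj^2}(2\deg D-3)$, and $\hatnabla$ is flat precisely when $\kappa=0$ (cf.\ Lemma~\ref{L: codimension one foliation, curvature of meromorphic connection}). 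So everything comes down to proving $\kappa=0$, and we may assume $\deg D\ge 2$.

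The key step — and the only place the hypothesis on the singularities is used — is to show that $\kappa$ vanishes scheme-theoretically along $\mathrm{Sing}(\F)$, the zero scheme of $\omega$; equivalently $\kappa\in H^0(\Pj^2,\idealsing\otimes\Omega^2_{\Pj^2}(2D))$, where $\idealsing$ is the ideal sheaf of $\mathrm{Sing}(\F)$. I would check this on the two local normal forms. At a radial singularity $p$, in the coordinates where $\F=\{x\,dy-y\,dx=0\}$ one has $d(x\,dy-y\,dx)=2\,dx\wedge dy\ne 0$; since $\omega$ vanishes at $p$, the relation $d\omega=\theta\wedge\omega$ forces $\theta$ to be singular at $p$, and writing $2(x\,dx+y\,dy)=(x-iy)\,d(x+iy)+(x+iy)\,d(x-iy)$ shows that the polar part of $\theta$ at $p$ is, modulo $\germe_{\Pj^2}\cdot(x\,dy-y\,dx)$, the closed form $d\log\!\big((x+iy)(x-iy)\big)$; a short computation then exhibits $d\theta$ as an element of the local ideal $(x,y)$. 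At a singularity $p$ where $\F$ is given by a closed holomorphic $1$-form $\eta$, the fact that $\F$ has isolated singularities forces $\omega=u\eta$ with $u$ a local unit, hence $d\omega=\tfrac{du}{u}\wedge\omega$ and $\theta=\tfrac{du}{u}+g\eta$ for some rational $g$, so $d\theta=dg\wedge\eta$ (using $d\eta=0$); as $\eta$ vanishes at $p$, and since $d\theta$ has poles of order at most $2$ along $D$ — which controls the case $p\in D$ — this lies in the ideal of $\{\eta=0\}$ at $p$.

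To conclude I would use a cohomological argument. Since $\omega$ has only isolated zeros, it sits in the short exact sequence
\[
0\longrightarrow\germe_{\Pj^2}\big(-c_1(\NF)\big)\xrightarrow{\ \omega\ }\Omega^1_{\Pj^2}\xrightarrow{\ \wedge\omega\ }\idealsing\otimes\Omega^2_{\Pj^2}\big(c_1(\NF)\big)\longrightarrow 0 .
\]
Twisting by $\germe_{\Pj^2}(2\deg D-c_1(\NF))$ and using $H^1(\Pj^2,\germe_{\Pj^2}(m))=0$ for all $m$, the map
\[
H^0\big(\Pj^2,\Omega^1_{\Pj^2}(2\deg D-c_1(\NF))\big)\xrightarrow{\ \wedge\omega\ }H^0\big(\Pj^2,\idealsing\otimes\Omega^2_{\Pj^2}(2D)\big)
\]
is surjective, so $\kappa=\omega\wedge\beta$ for some $\beta\in H^0\big(\Pj^2,\Omega^1_{\Pj^2}(2\deg D-c_1(\NF))\big)$. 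Since $\deg D\le c_1(\NF)/2$ forces $2\deg D-c_1(\NF)\le 0$, and $H^0(\Pj^2,\Omega^1_{\Pj^2}(m))=0$ for all $m\le 1$ (for instance by the Euler sequence), we get $\beta=0$, hence $\kappa=0$ and $\hatnabla$ is flat.

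I expect the main obstacle to be the curvature-vanishing step along $\mathrm{Sing}(\F)$, and in particular the bookkeeping at singularities lying on the polar divisor $D$: this is exactly where the pencil-type normal forms are essential, and where the constraint on the pole order of $\theta$ along $D$ (which in turn is what the hypothesis $\deg D\le c_1(\NF)/2$ interacts with) must be used carefully. The remaining steps are formal.
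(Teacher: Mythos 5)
Your proposal is correct and follows essentially the same route as the paper: show that the curvature defines a global section of $\idealsing\otimes\Omega^2_{\Pj^2}(2D)\cong\CTF^1\otimes\idealsing\otimes\CNF(2D)$ by a local analysis at the two types of pencil singularities, then kill it cohomologically (your Koszul sequence for $\omega$ is just a twist of the paper's conormal sequence, and both vanishings reduce to $h^0(\Omega^1_{\Pj^2}(m))=0$ for $m\le 0$). The one soft spot is the radial-singularity step: the decomposition $\theta=d\log\big((x+iy)(x-iy)\big)+g\,\omega$ introduces a factor $g$ with poles along $\{x^2+y^2=0\}$, which need not be contained in $D$, so the two terms of $d\theta=dg\wedge\omega+2g\,dx\wedge dy$ individually have worse poles than $d\theta$ and must be recombined before one can read off membership in $(x,y)$. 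The paper sidesteps this by writing $\theta=\eta/f$ with $\eta$ holomorphic and $f$ a local equation of $D$, deriving $xA+yB=2f$ from the extension condition, and checking on first jets that $f\,d\eta-df\wedge\eta$ vanishes at the singular point.
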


The proof of Lemma \ref{L: extension of the Bott connection with low degree and certain singularities} can be found in Section \ref{S: proof of the lemma of the extension of the Bott connection with low degree and certain singularities}. First, let us restate the theorems and provide the respective proofs assuming the lemma.

\begin{thm}
    Let $\F$ be a degree $d$ foliation on $\Pj^2$ with singularities of pencil type. If $\F$ admits at most $(d+2)^2/4$ radial singularities, then $\F$ admits an $\F$-invariant curve of degree at most $(d+2)/2$ passing by all the radial singularities.
\end{thm}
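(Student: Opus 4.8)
The plan is to exhibit the desired curve as the polar divisor of a meromorphic extension of the Bott connection, produced from a destabilizing sub-line bundle of $\PPjtwoF^1(\nabla_B)$.

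First I would identify $\idealpers(\F)$. Since $\F$ has singularities of pencil type, a local computation gives the answer: near a smooth point $d\omega=\eta\wedge\omega$ by Frobenius, near a closed-form singularity $d\omega=0$, so in both cases $\idealpers(\F)$ is locally the whole ring; near a radial singularity, writing $\omega=x\,dy-y\,dx$ so that $d\omega=2\,dx\wedge dy$, one has $f\cdot d\omega\in\Omega^1_{\Pj^2}\wedge\omega$ if and only if $f\in\m_p$. Hence $\idealpers(\F)=\mathcal I_Z$ is the reduced ideal sheaf of the finite set $Z$ of radial singularities; set $k=\#Z\le (d+2)^2/4$. Since $\F$ has codimension one, Proposition \ref{P: codimension one, ideal of persistent singularities} together with the short exact sequence (\ref{Eq: short exact sequence of the first sheaf of transverse jets}) applied to $(\CNF,\nabla_B)$, and the identifications $\CNF\simeq\germe_{\Pj^2}(-(d+2))$ and $c_1(\NF)=d+2$, yield
\[
0 \to \germe_{\Pj^2}(-2(d+2)) \to \PPjtwoF^1(\nabla_B) \xrightarrow{\pi} \mathcal I_Z(-(d+2)) \to 0 ,
\]
from which $c_1(\PPjtwoF^1(\nabla_B))=-3(d+2)$ and $c_2(\PPjtwoF^1(\nabla_B))=k+2(d+2)^2$.

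Next I would invoke the Bogomolov inequality. One computes $c_1^2-4c_2=(d+2)^2-4k\ge 0$, so $\PPjtwoF^1(\nabla_B)$ is not $\mu$-stable: if $k<(d+2)^2/4$ it is not even $\mu$-semistable, while in the boundary case $k=(d+2)^2/4$ (possible only for $d$ even) the normalization $\PPjtwoF^1(\nabla_B)(\tfrac32(d+2))$ has $c_1=c_2=0$, and $\Pj^2$ carries no $\mu$-stable rank two bundle with $c_1=c_2=0$. Since $\PPjtwoF^1(\nabla_B)$, being a dual sheaf, is reflexive and hence locally free on the smooth surface $\Pj^2$, it admits a saturated invertible subsheaf $\mcL=\germe_{\Pj^2}(-\ell)$ with $\ell\le\tfrac32(d+2)$. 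The composition $\mcL\hookrightarrow\PPjtwoF^1(\nabla_B)\xrightarrow{\pi}\CNF$ cannot vanish, for otherwise $\mcL\subset\ker\pi=\germe_{\Pj^2}(-2(d+2))$ would force $\ell\ge 2(d+2)$. It is therefore an injection of line bundles, so $d+2\le\ell$ and $\mcL\simeq\CNF(-D)$ for an effective divisor $D$ with $\deg D=\ell-(d+2)\le\tfrac12(d+2)$; moreover, since $\mathrm{im}(\pi)=\idealpers(\F)\otimes\CNF=\mathcal I_Z(-(d+2))$, the section cutting out $D$ lies in $\mathcal I_Z(\deg D)$, that is, $D$ passes through every point of $Z$.

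Finally, by Remark \ref{Rmk: meromorphic splitting when we have line bundles} and Theorem \ref{T: meromorphic extensions}, the inclusion $\mcL\hookrightarrow\PPjtwoF^1(\nabla_B)$ corresponds to a meromorphic extension $\hatnabla\colon\CNF\to\Omega^1_{\Pj^2}(D)\otimes\CNF$ of $\nabla_B$ with $\deg D\le\tfrac12(d+2)=c_1(\NF)/2$. Lemma \ref{L: extension of the Bott connection with low degree and certain singularities} then shows $\hatnabla$ is flat, and Proposition \ref{P: flat meromorphic extension of a partial connection on line bundle} shows that $D$ is $\F$-invariant; since $\deg D\le (d+2)/2$ and $Z\subset D$, this is the curve we wanted. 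I expect the main obstacles to be the local description of $\idealpers(\F)$ at the radial and closed-form singularities (a short computation, in the spirit of \cite[Lemma 2.1]{suwa-83-zbMATH03772606}) and the boundary case $k=(d+2)^2/4$, where Bogomolov yields only the equality $c_1^2=4c_2$ and one must separately rule out $\mu$-stability of $\PPjtwoF^1(\nabla_B)$.
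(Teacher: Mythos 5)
Your proposal is correct and follows essentially the same route as the paper: compute $c_1$ and $c_2$ of $\PPjtwoF^1(\nabla_B)$ from the short exact sequence with $\idealpers(\F)\otimes\CNF$, apply Schwarzenberger's inequality to produce a destabilizing sub-line bundle $\CNF(-D)$ with $\deg D\le (d+2)/2$ passing through the radial singularities, and then conclude via Lemma \ref{L: extension of the Bott connection with low degree and certain singularities} and Proposition \ref{P: flat meromorphic extension of a partial connection on line bundle}. Your extra care at the boundary case $k=(d+2)^2/4$ and the explicit local computation of $\idealpers$ at radial singularities are both consistent with what the paper implicitly uses.
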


\begin{proof} First, let us analyze the ideal of persistent singularities. For $p\in \Pj^2$ a singularity give locally by a closed 1-form, $(\idealpers)_p = \germe_p$, and for $p\in \Pj^2$ a radial singularity, $(\idealpers)_p = \m_p$. Hence, for foliations with singularities of pencil type, the length of the ideal of persistent singularities is exactly the number of radial singularities.

By Proposition \ref{P: codimension one, ideal of persistent singularities}, the short exact sequence associated to the first sheaf of transverse jets of the Bott connection is 
\[
0 \rightarrow \CNF^{\otimes 2} \rightarrow \PPjtwoF^1(\nabla_B) \rightarrow \CNF \otimes \idealpers \rightarrow 0,
\]
which allow us to calculate that $c_1(\PPjtwoF^1(\nabla_B)) = -3(d+2)$ and 
\[
c_2(\PPjtwoF^1(\nabla_B) = 2(d+2)^2 + l(\idealpers)  \le 2(d+2)^2 + \frac{(d+2)^2}{4} = \frac{9(d+2)^2}{4}.
\]
Hence, $4 \cdot c_2(\PPjtwoF^1(\nabla_B)) \le c_1^2(\PPjtwoF^1(\nabla_B))$ and, by Schwarzenberger's Lemma (see \cite[Chapter 2, Lemma 1.2.7]{okonek-2011-zbMATH05913322}), it follows that $\PPjtwoF^1(\nabla_B)$ is unstable. That is, there exists a saturated line bundle $\mcL \subset \PPjtwoF^1(\nabla_B)$ with 
\[
2c_1(\mcL) \ge c_1(\PPjtwoF(\nabla_B)) = -3(d+2).
\]
As in the proof of Proposition \ref{P: foliation with unstable first jet of the Bott connection and without global foliated 1-forms}, $\mcL \simeq \CNF(-D)$ for some divisor $D\ge 0$, and since we have the inclusion $\CNF(-D) \subset \CNF \otimes \idealpers$, the divisor $D$ vanishes on the radial singularities of $\F$. Additionally, since $c_1(\mcL) \ge -3(d+2)/2$, it follows that $\deg(D)  \le (d+2)/2$.

Let $\hatnabla: \CNF \rightarrow \Omega_X(D) \otimes \CNF$ be the meromorphic extension of $\nabla_B$ corresponding to the morphism $\sigma: \CNF(-D) \rightarrow \PXF^1(\nabla_B)$ given by Theorem \ref{T: meromorphic extensions}. By Lemma \ref{L: extension of the Bott connection with low degree and certain singularities},  $\hatnabla$ is flat. Therefore, by Proposition \ref{P: flat meromorphic extension of a partial connection on line bundle}, we conclude that $D$ is $\F$-invariant. 
\end{proof}

\begin{thm}
    Let $\F$ be a foliation of degree $2d-2$ on $\Pj^2$. Suppose that $\F$ is a foliation with singularities of pencil type, with exactly $d^2$ radial singularities, given by the intersection of two curves $C_1,C_2$ of degree $d$. Additionally, suppose that no curve of degree less than $d$ passes by all the radial singularities. Then, the foliation $\F$ is given by the pencil of curves determined by $C_1$ and $C_2$.
\end{thm}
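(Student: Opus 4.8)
\emph{Proof plan.}
The plan is to run the argument of Theorem~\ref{THM: pencil 1} in the extremal case $d^2=\bigl(\tfrac{(2d-2)+2}{2}\bigr)^2$, where all the numerical inequalities in that proof become equalities, to conclude that $\PPjtwoF^1(\nabla_B)$ splits as a direct sum of two line bundles whose "conormal parts" are distinct members of the pencil; each summand then yields a flat meromorphic extension of the Bott connection whose polar divisor is a pencil member, and comparing the two extensions along $\TF$ will identify $\F$ with the pencil generated by $C_1$ and $C_2$.

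First I would record the numerology and extract the splitting. Since $C_1,C_2$ have no common component, $C_1\cap C_2=V(F_1,F_2)$ is a complete intersection of colength $d^2$ with $(F_1,F_2)_e=0$ for $e<d$; and, exactly as in the proof of Theorem~\ref{THM: pencil 1}, pencil-type singularities give $\idealpers(\F)=I_{C_1\cap C_2}$, of colength $d^2$. By Proposition~\ref{P: codimension one, ideal of persistent singularities} one has $0\to\CNF^{\otimes 2}\to\PPjtwoF^1(\nabla_B)\xrightarrow{\pi}\idealpers\otimes\CNF\to 0$ with $\CNF\cong\germe_{\Pj^2}(-2d)$, whence $c_1(\PPjtwoF^1(\nabla_B))=-6d$ and $c_2(\PPjtwoF^1(\nabla_B))=8d^2+d^2=9d^2$, so $4c_2=c_1^2$ and, by Schwarzenberger's Lemma, $\PPjtwoF^1(\nabla_B)$ is not stable. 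As in the proof of Proposition~\ref{P: foliation with unstable first jet of the Bott connection and without global foliated 1-forms}, a saturated invertible destabilizing subsheaf is of the form $\CNF(-D_1)$ with $D_1\ge 0$ of degree $\le d$ and $\germe_{\Pj^2}(-D_1)\subset\idealpers$, so $D_1$ passes through all $d^2$ radial singularities; by the complete-intersection constraint $\deg D_1=d$, $D_1$ is reduced, and $D_1=\{G_1=0\}$ with $G_1\in(F_1,F_2)_d$ — a member of the pencil. Then $c_1(\CNF(-D_1))=-3d=\tfrac12 c_1(\PPjtwoF^1(\nabla_B))$, so the quotient is torsion-free of rank one with $c_1=-3d$ and with vanishing $0$-dimensional defect (its length is $9d^2-(-3d)(-3d)=0$); hence it is $\germe_{\Pj^2}(-3d)$, and since $\mathrm{Ext}^1(\germe_{\Pj^2}(-3d),\CNF(-D_1))=H^1(\Pj^2,\germe_{\Pj^2})=0$ we get $\PPjtwoF^1(\nabla_B)\cong\CNF(-D_1)\oplus\germe_{\Pj^2}(-3d)$. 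Running the same identification on the second summand, $\germe_{\Pj^2}(-3d)\cong\CNF(-D_2)$ for a second pencil member $D_2=\{G_2=0\}$, reduced of degree $d$; and $D_1\ne D_2$, since otherwise $\idealpers=I_{D_1}+I_{D_2}=I_{D_1}$ would be the ideal of a curve. Thus $G_1,G_2$ span $(F_1,F_2)_d$, so the pencil generated by $D_1,D_2$ is the one generated by $C_1,C_2$.

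Next I would build the two flat extensions and compare them. Each inclusion $\CNF(-D_i)\hookrightarrow\PPjtwoF^1(\nabla_B)$, composed with $\pi$, is a meromorphic splitting with poles on $D_i$ of the short exact sequence associated to $\PPjtwoF^1(\nabla_B)$, so by Theorem~\ref{T: meromorphic extensions} it corresponds to a meromorphic extension $\hatnabla_i\colon\CNF\to\Omega^1_{\Pj^2}(D_i)\otimes\CNF$ of $\nabla_B$; since $\deg D_i=d=\tfrac12 c_1(\NF)$, Lemma~\ref{L: extension of the Bott connection with low degree and certain singularities} shows $\hatnabla_i$ is flat, and Proposition~\ref{P: flat meromorphic extension of a partial connection on line bundle} that $D_i$ is $\F$-invariant. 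Both $\hatnabla_i$ restrict to $\nabla_B$ on $\TF$, so their difference $\alpha:=\hatnabla_1-\hatnabla_2$ is an $\germe_{\Pj^2}$-linear map $\CNF\to\Omega^1_{\Pj^2}(D_1+D_2)\otimes\CNF$ annihilating $\TF$, i.e.\ a global section of $\CNF(D_1+D_2)\cong\germe_{\Pj^2}$; writing $\omega$ for the defining $1$-form of $\F$, this means $\alpha=c\cdot\tfrac{\omega}{G_1G_2}$ for some $c\in\C$. If $c=0$ then $\hatnabla_1=\hatnabla_2$ would be a meromorphic connection on $\CNF$ with polar locus of dimension $0$, hence holomorphic, contradicting $c_1(\CNF)\ne 0$; so $c\ne 0$, and flatness of both $\hatnabla_i$ forces $d\alpha=0$, i.e.\ $\beta:=\tfrac{\omega}{G_1G_2}$ is a closed rational $1$-form on $\Pj^2$. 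Primitivity of $\omega$ forces $\beta$ to have a simple pole along every component of $D_1+D_2$, so $\beta=\sum_H\mu_H\tfrac{dH}{H}$ with all $\mu_H\ne 0$, the sum over the distinct irreducible components $H$ of $D_1+D_2$. Now any component $H\mid G_1$ and any $H'\mid G_2$ meet (B\'ezout), and their intersection lies in $\{G_1=G_2=0\}=C_1\cap C_2$, a set of radial singularities of $\F$; since at such a crossing the logarithmic foliation $\F$ has a radial singularity precisely when $\mu_H=-\mu_{H'}$, we get $\mu_H=-\mu_{H'}$ for all such pairs. Hence all $\mu_H$ with $H\mid G_1$ share a common value $\mu$, all $\mu_{H'}$ with $H'\mid G_2$ equal $-\mu$, so $\beta=\mu\bigl(\tfrac{dG_1}{G_1}-\tfrac{dG_2}{G_2}\bigr)$ and $\omega=G_1G_2\,\beta=\mu\,(G_2\,dG_1-G_1\,dG_2)$. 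Therefore $\F$ is the pencil generated by $G_1$ and $G_2$, equivalently by $C_1$ and $C_2$.

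The step I expect to require the most care is, inside the second paragraph, the passage from "$\mcL_1$ destabilizes" to "$\mcL_1\cong\CNF(-D_1)$ with $D_1$ a reduced degree-$d$ member of the pencil": one must keep track of the curve cut out by $\pi|_{\mcL_1}$ and invoke $(F_1,F_2)_{<d}=0$ to rule out smaller or non-reduced divisors. Closely related, in the third paragraph, is the local analysis at a crossing of two polar components of the logarithmic foliation — that it is a radial singularity exactly when the two residues are opposite — together with the verification that each crossing of a $G_1$-component with a $G_2$-component really is one of the prescribed radial singularities (in particular that no three components meet at such a point, which follows from a radial singularity having Milnor number one). An alternative finish, using only the single extension $\hatnabla_1$, would first show $D_1$ is irreducible (again via B\'ezout together with the pencil-type hypothesis and the Euler relation $\iota_{E}\omega=0$) and then deduce $\F=$ pencil from the closedness of $\omega/G_1^2$ by the same pole-order/degree bookkeeping.
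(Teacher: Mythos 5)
Your argument is correct in substance but follows a genuinely different route from the paper's. The paper never splits $\PPjtwoF^1(\nabla_B)$: it twists the exact sequence $0\to\CNF^{\otimes 2}\to\PPjtwoF^1(\nabla_B)\to\CNF\otimes\idealpers\to 0$ by $\germe_{\Pj^2}(3d)$ and uses $h^0(\germe_{\Pj^2}(-d))=h^1(\germe_{\Pj^2}(-d))=0$ to identify $H^0$ of the twisted jet sheaf with $H^0(\germe_{\Pj^2}(d)\otimes\idealpers)=\langle F_1,F_2\rangle$, so that \emph{every} member $C$ of the pencil yields a saturated morphism $\CNF(-C)\to\PPjtwoF^1(\nabla_B)$; by Theorem \ref{T: meromorphic extensions}, Lemma \ref{L: extension of the Bott connection with low degree and certain singularities} and Proposition \ref{P: flat meromorphic extension of a partial connection on line bundle}, every pencil member is then $\F$-invariant, and a foliation leaving invariant all members of a pencil is that pencil --- no residue computation is needed. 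You instead extract only two invariant members $D_1,D_2$ from a Schwarzenberger-type splitting (the extremal computation $4c_2=c_1^2$ plus the vanishing length defect, giving an honest direct-sum decomposition of $\PPjtwoF^1(\nabla_B)$, which is a nice by-product) and must then do genuine extra work --- closedness of $\omega/(G_1G_2)$, the logarithmic decomposition, and the residue matching $\mu_H=-\mu_{H'}$ at the radial crossings --- to recover the pencil. That route does work, but two points in your last paragraph need shoring up: (i) the simple poles of $\beta$ come from the \emph{reducedness} of $G_1G_2$ (which holds, since a multiple component of some $G_i$ would produce a non-reduced point of $C_1\cap C_2$, contradicting the $d^2$ distinct intersection points, but this deserves the argument), whereas primitivity of $\omega$ only gives the nonvanishing of the residues; (ii) the statement ``radial crossing if and only if opposite residues'' should be invoked only in the necessary direction --- the linear part of the dual vector field at $H\cap H'$ is $\mathrm{diag}(\mu_{H'},-\mu_H)$ and radiality forces it to be a multiple of the identity --- which is all you use. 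In short, the paper's proof buys brevity by producing the whole one-parameter family of invariant curves at once; yours buys the explicit splitting of the first transverse jet at the cost of a residue analysis.
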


\begin{proof}
    This proof is an easy adaptation of the preceding one. Considering the short exact sequence of the first sheaf of transverse jets of the Bott connection on $\CNF$, we have
    \[
    0 \rightarrow \CNF \otimes \CNF \rightarrow \PPjtwoF^1(\nabla_B) \rightarrow \CNF \otimes \idealpers \rightarrow 0,
    \]
    which using the isomorphism $\CNF \simeq \germe_{\Pj^2}(-2d)$, tensoring by $\germe_{\Pj^2}(3d)$, and defining $\mcE = \germe_{\Pj^2}(3d) \otimes \PPjtwoF^1(\nabla_B)$, induces the short exact sequence
    \[
    0 \rightarrow \germe_{\Pj^2}(-d) \rightarrow \mcE \rightarrow \germe_{\Pj^2}(d) \otimes \idealpers \rightarrow 0
    \]
    Since $h^1(\germe_{\Pj^2}(-d))=h^0(\germe_{\Pj^2}(-d))=0$, we use the long exact sequence to conclude that $H^0(\mcE) \rightarrow  H^0(\germe_{\Pj^2}(d) \otimes \idealpers)$ is an isomorphism. Hence, every curve $C$ of the pencil defined by $C_1$ and $C_2$ induces a section $\germe_X \rightarrow \mcE$, and since $H^0(\mcE(-1))=0$, these sections are all saturated. Following as in the preceding theorem, we conclude that each $C$ in the pencil of curves determined by $C_1$ and $C_2$ is $\F$-invariant. Therefore, $\F$ is the foliation determined by the pencil. 
\end{proof}

\subsection{A short exact sequence for foliation on surfaces}
In order to prove Lemma \ref{L: extension of the Bott connection with low degree and certain singularities}, we will need to consider the exact sequence of the conormal sheaf. Let $\F$ be a codimension one foliation on a complex smooth surface $X$. It is well-know in this case (see \cite[Section 2.1]{brunella-04-zbMATH02150908}) that the we have the short exact sequence
\begin{equation*}
    0 \rightarrow \CNF \rightarrow \Omega_X^1 \xrightarrow{\mathrm{restr}} \CTF^1 \otimes \idealsing \rightarrow 0,
\end{equation*}
which applying $\bigwedge^2$ induces
\begin{equation*}%\label{E: wedge 2 of the short exact sequence of conormal}
    \CNF \otimes \Omega_X^1 \xrightarrow{\wedge} \Omega_X^2 \rightarrow \bigwedge^2 \left( \CTF^1 \otimes \idealsing \right) \rightarrow 0
\end{equation*}
Remark that $\wedge: \CNF \otimes \Omega_X^1 \rightarrow \Omega_X^2$ is not injective, and the kernel corresponds to the subsheaf $\CNF \otimes \CNF \subset \CNF \otimes \Omega_X^1$. Therefore, we obtain the short exact sequence
\begin{equation}\label{E: short exact sequence of two forms on the surface}
    0 \rightarrow \CNF \otimes (\CTF^1 \otimes \idealsing) \xrightarrow{\wedge} \Omega_X^2 \rightarrow \bigwedge^2 \left( \CTF^1 \otimes \idealsing \right) \rightarrow 0.
\end{equation}
Let us analyze this short exact sequence locally. Let $(x,y)$ be local coordinates for $X$ around a singularity $p\in X$, and let us suppose that $\F$ is given by $\omega = adx + bdy$. Let $\eta \in \CTF^1$ be the local generator such that $\eta(b\cdot d/dx - a\cdot d/dy)=1$. 
With respect to the local generators $dx\wedge dy$ for $\Omega_X^2$ and $\omega \otimes \eta$ for $\CNF \otimes \CTF^1$, the morphism $\CNF \otimes \CTF^1\otimes \idealsing \rightarrow \Omega_X^2$ turns out to be the inclusion $\ideal{a,b} \rightarrow \germe_X$. Hence, in this case, Equation (\ref{E: short exact sequence of two forms on the surface}) is locally the short exact sequence
\begin{equation}\label{E: short exact sequence of the ideal}
    0 \rightarrow \ideal{a,b} \rightarrow \germe_X \rightarrow i_*\germe_{\mr{Spec}(a,b)} \rightarrow 0.
\end{equation}

\subsection{Proof of Lemma \ref{L: extension of the Bott connection with low degree and certain singularities}}\label{S: proof of the lemma of the extension of the Bott connection with low degree and certain singularities}

We start proving that the curvature 
\[
    \kappa(\hatnabla): \CNF \rightarrow \Omega_{\Pj^2}^2(2D) \otimes \CNF
\]
factor through the morphism $\wedge: \CNF \otimes \CTF^1\otimes \idealsing \rightarrow \Omega_{\Pj^2}^2$ described in Equation (\ref{E: short exact sequence of two forms on the surface}). In order to do this, we analyze $\kappa$ on neighborhoods of the singularities.  

Let us set some notation. Let $\omega$ be a 1-form defining the foliation on a neighborhood of a singularity $p$. Let $\eta \in \Omega^1_{\Pj^2}$ be a holomorphic 1-form, and let $f\in \germe_{\Pj^2}$  defining the divisor $D$ such that  $\hatnabla(\omega) = \eta/f \otimes \omega$. We want to prove that
\[
\kappa(\hatnabla) = d\left( \frac{\eta}{f}\right) = \frac{f \cdot d\eta - df \wedge \eta}{f^2} \in (\CNF \wedge \Omega_{\Pj^2}^1) \otimes \germe_{\Pj^2}(2D).
\]

\begin{claim}
    With notation above, if $\omega$ is closed, then $(f\cdot d\eta - df \wedge \eta) \in \CNF \wedge \Omega_{\Pj^2}^1$.
\end{claim}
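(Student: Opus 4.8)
The plan is to use the hypothesis that $\hatnabla$ extends the Bott connection in order to force $\eta$ to be conormal in a neighborhood of $p$, and then to finish by a direct computation exploiting $d\omega = 0$.

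First I would note that, since $\hatnabla$ restricts to $\nabla_B$ on $\CNF$, for every $v\in T_\F$ one has $\hatnabla_v(\omega) = (\nabla_B)_v(\omega) = \mc{L}_v(\omega)$. By Cartan's formula, $\mc{L}_v(\omega) = i_v(d\omega) + d(i_v\omega)$; here $i_v\omega = 0$ because $\omega\in\CNF$ and $v\in T_\F$, and $d\omega = 0$ by hypothesis, so $\hatnabla_v(\omega) = 0$ for every $v\in T_\F$. On the other hand, from $\hatnabla(\omega) = (\eta/f)\otimes\omega$ we get $\hatnabla_v(\omega) = (\eta(v)/f)\cdot\omega$. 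Comparing the two expressions and using that $\CNF$ is a line bundle locally generated by $\omega$ near $p$ (which holds since $\F$ is cut out locally by $\omega$ and has singular set of codimension two), I would conclude $\eta(v) = 0$ for every $v\in T_\F$; that is, $\eta$ is a local section of $\CNF$.

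Then I would write $\eta = g\,\omega$ with $g\in\germe_{\Pj^2,p}$, and compute, using $d\omega = 0$,
\[
f\,d\eta - df\wedge\eta = f\,d(g\,\omega) - df\wedge(g\,\omega) = f\,dg\wedge\omega - g\,df\wedge\omega = (f\,dg - g\,df)\wedge\omega,
\]
which lies in $\CNF\wedge\Omega_{\Pj^2}^1$ since $\omega\in\CNF$. This is precisely the statement of the claim.

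The argument is elementary, so I do not expect a real obstacle; the two points deserving some care are that the extension hypothesis is genuinely used — without it there would be no reason for $\eta$ to be conormal — and the passage from $(\eta(v)/f)\cdot\omega = 0$ to $\eta(v) = 0$, which rests on $\omega$ being a local generator of the line bundle $\CNF$ near the singularity.
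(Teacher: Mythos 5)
Your proposal is correct and follows essentially the same route as the paper: use the extension hypothesis together with $\mc{L}_v(\omega)=i_v(d\omega)+d(i_v\omega)=0$ to conclude $\eta\in \CNF$, write $\eta=g\,\omega$, and compute $f\,d\eta-df\wedge\eta=(f\,dg-g\,df)\wedge\omega$. The only difference is that you spell out the Cartan-formula justification and the local-generator step, which the paper leaves implicit.
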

\begin{proof}
    Since $\omega$ is closed, we have that $\Lie_v(\omega)=0$ for all $v\in T_{\F}$. Thus, $\eta(v)/f =0$ for all $v\in T_{\F}$, and hence $\eta \in \CNF$, that is, $\eta = g \cdot \omega$ for some $g\in \germe_{\Pj^2}$. We calculate that
    \begin{equation*}
        \begin{split}
            f\cdot d\eta - df \wedge \eta & = f \cdot dg \wedge \omega + fg \cdot d\omega - g \cdot df \wedge \omega \\
            & =  (f\cdot dg -g \cdot df ) \wedge \omega \in (\CNF \wedge \Omega_{\Pj^2}^1).
        \end{split}
    \end{equation*}
    This concludes the proof of the claim.
\end{proof}

Observe that, for the same reason, the curvature belongs to $\CNF \wedge \Omega_{\Pj^2}^1$ on a neighborhood of smooth points $p\in \Pj^2$, since in this case the foliation is also locally described by a closed 1-form.

\begin{claim}
    With notation above, if $p\in \Pj^2$ is a radial singularity, then $(f\cdot d\eta - df \wedge \eta) \in \CNF \wedge \Omega_{\Pj^2}^1$.
\end{claim}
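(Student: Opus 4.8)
The plan is to work in local coordinates $(x,y)$ at $p$ in which, by definition of a radial singularity, the foliation is defined by $\omega = x\,dy - y\,dx$, and to convert the fact that $\hatnabla$ extends the Bott connection into an algebraic identity relating $\eta$ and $f$. Since for $\omega = x\,dy - y\,dx$ one has $\langle a,b\rangle = \langle -y,x\rangle = \m_p$ (the ideal of the singular scheme at $p$), the discussion preceding Equation (\ref{E: short exact sequence of the ideal}) shows that locally $\CNF\wedge\Omega_{\Pj^2}^1 = \omega\wedge\Omega_{\Pj^2}^1 = \m_p\cdot(dx\wedge dy)$; hence the claim amounts to showing that the coefficient of $dx\wedge dy$ in $f\,d\eta - df\wedge\eta$ vanishes at $p$.

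First I would use that near $p$ the sheaf $T_\F$ is generated by the Euler vector field $R = x\,\partial_x + y\,\partial_y$, the unique (up to units) vector field annihilated by $\omega$. Because $\hatnabla$ extends $\nabla_B$, we get $\hatnabla_R(\omega) = (\nabla_B)_R(\omega) = \Lie_R(\omega) = i_R(d\omega) = 2\omega$, using $d\omega = 2\,dx\wedge dy$ and $i_R\omega = 0$. Comparing this with $\hatnabla_R(\omega) = (\eta(R)/f)\,\omega$, which follows from $\hatnabla(\omega) = (\eta/f)\otimes\omega$, and using that $\CNF$ is torsion-free with local generator $\omega$, I obtain the key identity $\eta(R) = 2f$. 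Writing $\eta = P\,dx + Q\,dy$, this reads $xP + yQ = 2f$.

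Next I would extract the $1$-jet of this identity at $p$: evaluating at $p$ gives $f(p) = 0$, while differentiating in $x$ and in $y$ and then evaluating at $p$ gives $P(p) = 2\,\partial_x f(p)$ and $Q(p) = 2\,\partial_y f(p)$. A one-line Leibniz computation gives $f\,d\eta - df\wedge\eta = \big(f(\partial_x Q - \partial_y P) - (\partial_x f)\,Q + (\partial_y f)\,P\big)\,dx\wedge dy$; substituting the three relations above, the coefficient at $p$ becomes $0 - 2\,\partial_x f(p)\,\partial_y f(p) + 2\,\partial_y f(p)\,\partial_x f(p) = 0$. Thus this coefficient lies in $\m_p$, i.e.\ $f\,d\eta - df\wedge\eta \in \CNF\wedge\Omega_{\Pj^2}^1$ on a neighborhood of $p$, which proves the claim (and, as a byproduct, shows that $D$ passes through every radial singularity).

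I do not expect a genuine obstacle here; the only points requiring care are to derive $\eta(R) = 2f$ from the extension hypothesis alone — not from flatness, which is exactly what the ambient Lemma \ref{L: extension of the Bott connection with low degree and certain singularities} is establishing — and to record that this constraint forces $f(p) = 0$, after which the computation is the same two-line manipulation already used in the preceding (closed-form) claim.
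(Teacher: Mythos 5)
Your proof is correct and follows essentially the same route as the paper: local coordinates with $\omega = x\,dy - y\,dx$, the identity $\eta(R)=2f$ obtained from the extension hypothesis via $\Lie_R(\omega)=2\omega$, extraction of the $1$-jet at $p$, and the resulting cancellation in the coefficient of $dx\wedge dy$. Your explicit identification of $\CNF\wedge\Omega_{\Pj^2}^1$ with $\m_p\cdot(dx\wedge dy)$ near a radial singularity just makes precise the reduction to vanishing at $p$ that the paper performs via its Equation (\ref{E: short exact sequence of the ideal}).
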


\begin{proof}
    By hypothesis, there exist coordinates $(x,y)$ on a neighborhood of $p$ such that $\F$ is defined by the 1-form $\omega = x\cdot  dy - y\cdot dx$. Also in this system of coordinates, the tangent $T_{\F}$ is generated by $v = x \cdot \de/\de x + y \cdot \de/\de y$. Finally, let us write $\eta = A dx + B dy$.
    
    Since $\hatnabla$ is an extension of the Bott connection, we calculate that
    \begin{equation*}
        \begin{split}
            \frac{\eta(v)}{f} \cdot \omega & = \Lie_v(\omega) = d\omega(v) = 2 dx \wedge dy (v)  \\
            & = 2 \cdot (x dy -y dx) \\
            & = 2 \cdot \omega,
        \end{split}
    \end{equation*}
    and thus 
    \[
    x\cdot A+ y \cdot B = 2 \cdot f.
    \]
    Considering the evaluation on $p=0$, it follows that $f(0)=0$. Moreover, considering the first jet of the equation above, we have that
    \[
    x \cdot A(0) + y \cdot B(0) = 2 \cdot \left(x \cdot \frac{\de f}{\de x}(0) + y \cdot \frac{\de f}{\de y}(0)\right),
    \]
    which imply that $A(0) = 2 \cdot \frac{\de f}{\de x}(0)$ and $B(0) = 2 \cdot \frac{\de f}{\de y}(0)$.

    Finally, remember we want to conclude that $df \wedge \eta - f \cdot d\eta \in \CNF \wedge \Omega_{\Pj^2}^1$. By the exact sequence of Equation (\ref{E: short exact sequence of the ideal}), it is enough to conclude that $(df \wedge \eta - f \cdot d\eta)(p)=0$. From the expressions we derived above, we calculate that
    \begin{equation*}
        \begin{split}
            (df \wedge \eta - f \cdot d\eta)(p) & = \left(\frac{\de f}{\de x} \cdot B - \frac{\de f}{\de y} \cdot A\right)(0) \cdot dx \wedge dy - f(0) \cdot d\eta(0) \\
            & = \left( \frac{\de f}{\de x}(0) \cdot B(0) - \frac{\de f}{\de y}(0) \cdot A(0) \right) \cdot dx \wedge dy \\
            & = 0.
        \end{split}
    \end{equation*}
    This concludes the proof of the claim.
    \end{proof}
    
    By the claims above and from the exact sequence of Equation (\ref{E: short exact sequence of two forms on the surface}), it follows that the curvature of $\hatnabla$ factors through the inclusion $\CNF \otimes (\CTF^1 \otimes \idealsing) \rightarrow \Omega_{\Pj^2}^2$, that is, it induces an $\germe_{\Pj^2}$-morphism $\CNF \rightarrow (\CNF \otimes \CTF^1 \otimes \idealsing) \otimes \germe_{\Pj^2}(2D) \otimes \CNF$ such that the diagram
    \begin{equation*}
        \begin{tikzpicture}
            \matrix(m)[matrix of math nodes, column sep = 2em, row sep = 2em]
            {
            & (\CNF \otimes \CTF^1 \otimes \idealsing) \otimes \germe_{\Pj^2}(2D) \otimes \CNF \\
            \CNF & \Omega_{\Pj^2}^2 \otimes \germe_{\Pj^2}(2D) \otimes \CNF \\
            };
            \path[->]
            (m-2-1) edge (m-1-2) edge node[below]{$\kappa(\hatnabla)$} (m-2-2)
            (m-1-2) edge (m-2-2)
            ;
        \end{tikzpicture}
    \end{equation*}
    commutes. Thus, the curvature induces an element
    \[
    \kappa \in H^0(\Pj^2, \CTF^1 \otimes \idealsing \otimes \CNF(2D))
    \]
    Since $\deg(D)\le (d+2)/2$, we have that $\deg(\CNF(2D)) \le 0$. Moreover, since $h^0(\Pj^2, \Omega^1_{\Pj^2})=h^1(\Pj^2, \CNF) = 0$, it follows from the exact sequence on the conormal sheaf of $\F$ that $h^0(\Pj^2,\CTF^1 \otimes \idealsing)=0$. Hence, $h^0(\Pj^2, \CTF^1 \otimes \idealsing \otimes \CNF(2D))=0$, and therefore $\kappa =0$, that is, $\hatnabla$ is flat. \qed

\providecommand{\bysame}{\leavevmode\hbox to3em{\hrulefill}\thinspace}
\providecommand{\MR}{\relax\ifhmode\unskip\space\fi MR }
% \MRhref is called by the amsart/book/proc definition of \MR.
\providecommand{\MRhref}[2]{%
  \href{http://www.ams.org/mathscinet-getitem?mr=#1}{#2}
}
\providecommand{\href}[2]{#2}

\end{document}